\newcommand{\abs}[1]{\vert #1\vert}
\renewcommand{\a}{\alpha}
\newcommand{\bbX}{\mathbb{X}}
\newcommand{\bbT}{\mathbb{T}}
\newcommand{\bbK}{\mathbb{K}}
\newcommand{\bbD}{\mathbb{D}}
\newcommand{\bbE}{\mathbb{E}}
\newcommand{\g}{\gamma}
\newcommand{\G}{\Gamma}
\newcommand{\wt}[1]{\widetilde{#1}}
\newcommand{\wh}[1]{\widehat{#1}}
\newcommand{\ol}[1]{\overline{#1}}
\newcommand{\cB}{\mathcal{B}}
\newcommand{\Q}{\mathds{Q}}
\renewcommand{\S}{\Sigma}
\newcommand{\sn}[1]{{\mathds{S}^{#1}}}
\newcommand{\hn}[1]{{\mathds{H}^{#1}}}
\newcommand{\cC}{\mathcal{C}}
\newcommand{\N}{\mathbb{N}}
\newcommand{\Hy}{\mathbb{H}}
\newcommand{\Ma}{\mathcal{M}}
\newcommand{\Oc}{\mathcal{O}}
\newcommand{\Z}{\mathbb{Z}}
\newcommand{\D}{\mathbb{D}}
\newcommand{\Sph}{\mathds{S}}
\newtheorem{theorem}{Theorem}[section]
\newtheorem{corollary}[theorem]{Corollary}
\newtheorem{proposition}[theorem]{Proposition}
\newtheorem{conjecture}[theorem]{Conjecture}
\newtheorem{case}{Case}
\theoremstyle{definition}
\newtheorem{definition}[theorem]{Definition}
\newtheorem{remark}[theorem]{Remark}
\newtheorem{example}[theorem]{Example}
\title{Geometrization in Geometry}
\author{I. de Freitas \and A. Ramos}
\newcommand\R{\mathds{R}}
\begin{document}

\maketitle

\begin{center}
\noindent
\begin{minipage}{0.85\textwidth}\parindent=15.5pt

\smallskip
\begin{center}
\large{\textsl{Dedicated to Professor Renato Tribuzy\\ on the occasion of his 75th birthday}}
\end{center}

\smallskip

{\small{
\noindent {\bf Abstract.} 
So far, the most magnificent breakthrough in mathematics
in the 21st century is the Geometrization Theorem, a bold conjecture
by William Thurston (generalizing Poincaré's Conjecture) and proved
by Grigory Perelman, based on the program suggested by Richard Hamilton. In this
survey article, we will explain the statement of this result, 
also presenting
some examples of how it can be used to obtain interesting 
results in differential geometry.}
\smallskip

\noindent {\bf{Keywords:}} geometrization, hyperbolization, Poincaré's 
conjecture, orbifolds, Seifert fibered spaces, geometric topology.
\smallskip

\noindent{\bf{2020 Mathematics Subject Classification:}} Primary: 51H25. 
Secondary: 53A10, 57K32, 53C42.
}

\end{minipage}
\end{center}

\section{Introduction} 

In certain sense, a {\em mathematical object} is an idealistic
representation of the reality, and in several contexts this
idealization suffices to give us insight about nature and,
more broadly, the universe. However, in the past few centuries,
mathematics have developed to a degree in which it escapes 
reality and immerses into the world of imagination. Astonishing
is the fact that, even in its retreat from reality, mathematics
has the ability to provide tools that later can be used to 
explain natural effects and develop science and technology as a 
whole.

In this survey paper, dedicated to Professor Renato Tribuzy,
we are interested in the shapes of certain
mathematical objects called {\em 3-manifolds}.
More specifically, we present what is arguably the most 
significant development on mathematics in the 21st century, 
which is the {\em geometrization theorem}. Rather than
presenting its proof, which was given in a series of 
papers~\cite{per1,per2,per3}
by Grigory Perelman, we will focus on its precise 
statement and on some of the background material necessary 
for its understanding. 

Although the main topic of this article is topology of 
3-manifolds, it is actually a paper in {\em geometric topology},
but we would like to mention that it was written with a special care
for differential geometers readers, which sometimes may have trouble 
(as we did) for translating the concepts of topology to a more
geometric perspective.
We also notice that the main results concerning 
basic aspects of geometrization of 3-manifolds that
we present may be found in a deeper level in the books
of M. Aschenbrenner, S. Friedl, H. Wilton~\cite{3mg}, B. 
Martelli~\cite{martelli} or P. Scott~\cite{scott}.

Next, we explain the organization of the manuscript. In
Section~\ref{sectandg}, we present the intuitive 
concept of topology and how it 
is related to geometry. We also explain the concept of an orbifold,
with emphasis for dimension 2, which is necessary 
for geometrization.
Section~\ref{sec3manf} is where we present the geometrization theorem,
after making precise the definition and classification of 
Seifert fibered spaces. In Section~\ref{sechyperb}, we 
particularize to the hyperbolic geometry, presenting Thurston's 
hyperbolization criterion, several examples and some applications.

\section*{Acknowledgments}

The authors would like to thank 
Colin Adams and Vanderson Lima for useful discussions in the
preparation of this manuscript and the referees for several suggestions 
and comments that improved the paper.
The first author was partially 
supported by CAPES/Brazil, and second author was partially
supported by CNPq/Brazil, grant number 312101/2020-1.

\section{Topology and Geometry}\label{sectandg}

In an intuitive manner, we may say that
{\em Topology} is the study of shapes. If two topological spaces
can be obtained one from the other by means of a homeomorphism,
then Topology regards both objects as the same.
On the other hand, {\em Riemannian Geometry} is the study of
certain smooth objects, called manifolds, together
with their intrinsic distances, measured by (Riemannian) metrics,
and two Riemannian manifolds will be thought as being the same
if there is an isometry (which is a metric preserving
diffeomorphism) between them, see Figure~\ref{figspheres}.

At first glance these two concepts may seem to be
disjoint, but there are several connections between them, which 
make both theories richer. First of all, 
every differentiable manifold $M$
has an intrinsic topology associated to it\footnote{This intrinsic
topology is obtained by stating that a subset $U\subset M$ is
open if $x_\a^{-1}(U\cap x_\a(U_\a))$ is an open set of 
$\R^n$ for every local chart $x_\a\colon U_\a\subset \R^n\to M$. 
It is important to mention that 
we assume that this topology
is {\em Hausdorff}, i.e., any two points can be separated by disjoint
neighborhoods, and that $M$ can be covered by a countable number 
of charts. When $M$ is connected, these assumptions are equivalent
to the existence of a differentiable
partition of unity on $M$, which is an essential tool for the study
of several questions on manifolds.}, 
hence we are allowed to
regard $M$
as a topological space and to use
the expression {\em the topology of $M$} to refer to such topological
space structure.
For details, see~\cite[Chapter~2]{Bredon} (which contains a topological
introduction) or~\cite[Chapter~0]{manfr} (for a more geometric 
point of view).

Although
there are several topological spaces that are not manifolds
and even homeomorphic manifolds that are not 
diffeomorphic~\cite{milnorsphere},
a striking observation is that it is possible to relate
geometry and topology in elegant and deep manners. For
instance, if $S$ is a closed (compact, without boundary) 
surface endowed with a Riemannian
metric, then Gauss-Bonnet Theorem implies that its 
total Gaussian
curvature $\int_SK_S$ (a geometric quantity) 
is related with its Euler characteristic (a topological 
invariant)
$\chi(S)$ by
$$\int_S K_S = 2\pi \chi(S).$$
In particular, independently on the metric considered
on the 2-sphere (for instance, see
the spheres depicted in Figure~\ref{figspheres}), its total
curvature will always equals $4\pi$. We cannot {\em bend a 
sphere} to increase or decrease its total curvature 
without tearing it apart, thus changing its topology.

\begin{figure}[t]
\centering
\includegraphics[width=\textwidth]{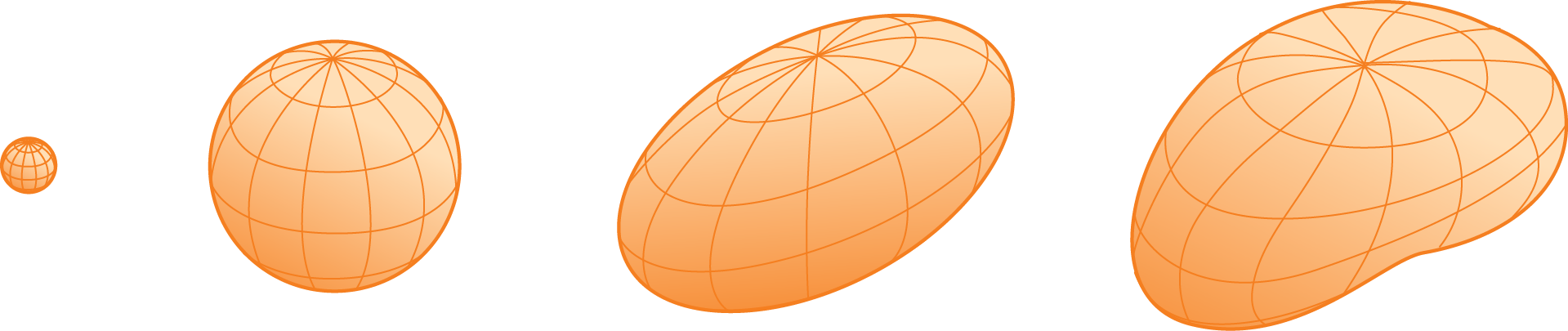}
\caption{\label{figspheres}
As topological spaces, all these 2-spheres are equivalent, since
they are homeomorphic.
However, from a geometric point of view, they are distinguishable
from each other by their respective metrics.}
\end{figure}

Other interesting results relating topology and geometry are
the Bonnet-Myers Theorem and the Cartan-Hadamard Theorem, which
restrict the topology of a given geometry:

\begin{theorem}[{Bonnet-Myers~\cite{Bonnet,myers}}]
Let $M$ be a complete Riemannian manifold with sectional curvature
$K_M\geq \delta>0$. Then, $M$ is compact and $\pi_1(M)$ is 
finite.
\end{theorem}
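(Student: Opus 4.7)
The plan is to bound the diameter of $M$ from above by $\pi/\sqrt{\delta}$ using the classical second variation of arc length, then deduce compactness from completeness via the Hopf--Rinow theorem, and finally handle $\pi_1(M)$ by transporting everything to the universal cover.

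For the diameter bound, fix $p,q\in M$. By completeness, Hopf--Rinow provides a minimizing unit-speed geodesic $\gamma:[0,L]\to M$ with $\gamma(0)=p$ and $\gamma(L)=q$, where $L=d(p,q)$. Assume for contradiction that $L>\pi/\sqrt{\delta}$. Parallel transport an orthonormal basis of $\gamma'(0)^{\perp}$ along $\gamma$ to obtain parallel fields $E_{2},\dots,E_{n}$, and set $V_{i}(t):=\sin(\pi t/L)\,E_{i}(t)$; each $V_{i}$ is an admissible variation field, being orthogonal to $\gamma'$ and vanishing at both endpoints. The sectional curvature hypothesis yields $\operatorname{Ric}(\gamma',\gamma')\geq (n-1)\delta$, and summing the second variation formula for length over $i=2,\dots,n$ replaces the curvature term by this Ricci bound. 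A short integration of the trigonometric integrals that appear gives
\[
\sum_{i=2}^{n} L''_{V_{i}}(0) \;\leq\; (n-1)\,\frac{L}{2}\left(\frac{\pi^{2}}{L^{2}}-\delta\right) \;<\; 0,
\]
so at least one $V_{i}$ strictly decreases length to first order, contradicting minimality of $\gamma$. Hence $\operatorname{diam}(M)\leq \pi/\sqrt{\delta}$.

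Given this bound, $M$ equals a closed metric ball of radius $\pi/\sqrt{\delta}$ around any fixed basepoint, and completeness together with Hopf--Rinow then forces $M$ to be compact. For the statement on $\pi_{1}(M)$, I would endow the universal cover $\widetilde M$ with the pullback Riemannian metric; both completeness and the sectional curvature bound lift, so the same argument shows $\widetilde M$ is also compact. The fibers of the covering $\widetilde M\to M$ are then discrete closed subsets of a compact space, hence finite, and because they are in bijection with $\pi_{1}(M)$ the fundamental group is finite as well.

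The only genuinely delicate step is the second variation computation: one must pick the sine ansatz that tunes the critical length to $\pi/\sqrt{\delta}$, and work with a parallel orthonormal frame so that the hypothesis on sectional curvature enters, via summation, as a Ricci lower bound. All remaining ingredients are routine applications of the Hopf--Rinow theorem and elementary covering space theory.
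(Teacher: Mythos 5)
Your proof is correct and is the classical Myers/Synge second-variation argument. The paper itself does not give a proof of this theorem---it is stated as background with citations to Bonnet and Myers---so there is nothing in the source to compare against, but your route is the standard one found in essentially every textbook treatment.

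One small terminological slip: after computing $\sum_{i}L''_{V_i}(0)<0$, you conclude that some $V_i$ ``strictly decreases length to first order.'' It is the \emph{second} variation that is negative; the first variation vanishes because $\gamma$ is a geodesic and each $V_i$ vanishes at the endpoints. The contradiction with minimality comes from having a critical point of length whose second variation is negative, not from a first-order decrease. With that wording corrected, every step is sound: the Ricci bound $\operatorname{Ric}(\gamma',\gamma')\geq(n-1)\delta$ does follow from the sectional curvature hypothesis, the trigonometric integrals $\int_0^L\sin^2(\pi t/L)\,dt=\int_0^L\cos^2(\pi t/L)\,dt=L/2$ give exactly the displayed inequality, Hopf--Rinow converts the resulting diameter bound into compactness, and lifting the metric to the universal cover yields finiteness of $\pi_1(M)$ via the finiteness of the fiber.
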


\begin{theorem}[{Cartan-Hadamard, see~\cite{helgason} or~\cite{manfr}}]
Let $M$ be a complete Riemannian manifold with sectional curvature
$K_M\leq 0$. Then, the exponential map of $M$ at any point $p$
is a covering transformation. In particular,
the universal covering of $M$ is
diffeomorphic to $\R^n$, where $n = {\rm dim}(M)$.
\end{theorem}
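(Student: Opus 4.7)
The plan is to establish the result in four steps, each building on the previous one. First, I would invoke Hopf--Rinow: since $M$ is complete, geodesics are defined for all time, so the exponential map $\exp_p\colon T_pM\to M$ is defined on the entire tangent space. Second, I would show that $\exp_p$ is a local diffeomorphism at every $v\in T_pM$ by analyzing Jacobi fields along the geodesic $\gamma(t)=\exp_p(tv)$. The key observation is that for any nonzero Jacobi field $J$ along $\gamma$ with $J(0)=0$, the function $f(t)=\langle J(t),J(t)\rangle$ satisfies $f''(t)\geq 0$ under the assumption $K_M\leq 0$ (this follows from the Jacobi equation together with the sign of the curvature term), and $f'(0)=0$ while $f''$ is not identically zero on any interval where $J\neq 0$. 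Hence $f$ is strictly convex away from $0$, so $J$ has no second zero. This means $\gamma$ has no point conjugate to $p$, and therefore $d(\exp_p)_v$ is a linear isomorphism for every $v$.

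Third, I would pull back the metric of $M$ by $\exp_p$ to obtain a Riemannian metric $g^*$ on $T_pM$ making $\exp_p\colon (T_pM,g^*)\to M$ a local isometry. In this metric, the radial lines $t\mapsto tv$ from the origin are geodesics (since $\exp_p$ sends them to geodesics of $M$ and is a local isometry), and they are defined for all $t\in\R$. Consequently $(T_pM,g^*)$ is geodesically complete at the origin, and by another application of Hopf--Rinow it is complete as a metric space.

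Fourth, I would invoke the standard lemma that a local isometry $F\colon N\to M$ from a complete connected Riemannian manifold $N$ to a connected Riemannian manifold $M$ is automatically a surjective covering map. The proof of this lemma proceeds by lifting geodesics: given $q\in M$ and $\tilde q\in F^{-1}(q)$, one can lift any geodesic starting at $q$ to a geodesic of $N$ starting at $\tilde q$ using completeness, and a small enough geodesic ball around $q$ lifts diffeomorphically to each preimage, exhibiting the covering property. Applying this lemma to $\exp_p$ yields that $\exp_p$ is a covering map, and since $T_pM\cong\R^n$ is simply connected, it is the universal cover of $M$, giving the diffeomorphism $\widetilde M\cong\R^n$.

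The main obstacle, in my view, is not the Jacobi field computation, which is standard, but rather the careful verification that $(T_pM,g^*)$ is complete and that local isometries from complete manifolds are coverings; the latter requires a slightly delicate path-lifting argument that one must set up correctly. Everything else reduces to classical Riemannian tools (Hopf--Rinow, the Jacobi equation, and the sign of $K_M$).
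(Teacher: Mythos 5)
Your proof is correct and follows the standard classical argument; the paper itself does not supply a proof of Cartan--Hadamard but merely cites~\cite{helgason} and~\cite{manfr}, and the argument you outline (no conjugate points via convexity of $t\mapsto |J(t)|^2$, pullback metric on $T_pM$, completeness of radial geodesics, and the lemma that a local isometry out of a complete manifold is a covering) is precisely the one given in those references. One small remark: the statement as written in the paper says ``covering transformation'' where it should say ``covering map,'' which is what you correctly prove.
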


But perhaps even more interestingly than the aforementioned
results, geometry can be used to 
answer an old, natural topology question: 
{\em what are all the possible
shapes of surfaces?} This is going to be discussed
in the next section.

\subsection{Surfaces.}

In this section, we will discuss what are 
the possible topologies for
a closed, orientable surface, and we will also present
the geometrization\footnote{By geometrization of a manifold $M$
we mean finding a decomposition of $M$ such that each component admits
a geometric structure, which is a complete metric locally isometric 
to a given simply connected homogeneous manifold; a detailed description
will be given in Section~\ref{secgeomm}.} theorem for surfaces, 
which will be a starting
point for the reader to understand the geometrization of a
closed, orientable 3-manifold. 

\begin{definition}
Let $S$ be a closed, orientable surface. Then, the 
{\em genus} of $S$ is the maximal number of
pairwise disjoint simple closed curves in $S$ whose collection do not 
separate $S$.
\end{definition}

\begin{example}
Since the sphere $\sn2$ 
is simply connected, every simple closed curve in $\sn2$
separates. Thus, its genus is zero. 

\noindent 
The genus of the torus $\bbT^2 = \sn1\times\sn1$ is positive, since
there exists a nonseparating simple closed curve 
(for instance, $\sn1\times\{p\}$ for any $p\in \sn1$) in $\bbT^2$.
However, if $\g_1$ and $\g_2$ are pairwise disjoint simple closed curves
that individually do not separate,
then $\bbT^2\setminus \g_1$ has the topology of $\sn1\times(0,1)$,
and we may see $\g_2$ as a
nontrivial simple closed curve in $\sn1\times(0,1)$, so $\g_2$
is parallel to $\sn1\times\{1/2\}$, thus separating. This
proves that the collection $\g_1\cup \g_2$ separates $\bbT^2$,
showing that its genus is equal to one.
\end{example}

More than an interesting topological invariant, the genus of
a closed, orientable surface is sufficient to completely 
classify its topology. This follows from the
following {\em Classification
Theorem}, whose original proof dates back from the 1860's,
and a modern approach presented by J. Conway as the 
{\em Zero Irrelevancy Proof} (or ZIP proof, for short) 
can be found in~\cite{franweeks}.

\begin{theorem}[Classification Theorem]
Let $S$ be a connected, orientable, closed surface. 
Then, $S$ is diffeomorphic to a 
connected sum (see Definition~\ref{consum} below)
of the 2-sphere
$\sn2$ with $g\geq 0$ copies of the 2-torus $\bbT^2$.
\end{theorem}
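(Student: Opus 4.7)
The plan is to use the classical approach via triangulation followed by a combinatorial reduction of a polygonal presentation. First, I would invoke Radó's theorem to equip $S$ with a finite simplicial triangulation, which is possible because $S$ is compact and 2-dimensional. Taking a maximal spanning tree in the dual 1-skeleton and cutting $S$ open along all edges not in the spanning tree produces a polygon $P$, i.e.\ a closed 2-disk, whose boundary edges are identified in pairs, each pair corresponding to an edge of the triangulation. Since $S$ is orientable, in the cyclic edge-word recording the boundary identifications each symbol appears exactly twice and always with opposite orientations; equivalently, no letter of the form $aa$ occurs.

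The main step is to reduce this cyclic word to the canonical form $a_1 b_1 a_1^{-1} b_1^{-1} \cdots a_g b_g a_g^{-1} b_g^{-1}$ for some $g\geq 0$, via a finite sequence of cut-and-paste moves on $P$. I would proceed in three stages: (a) eliminate any adjacent cancelling pair $xx^{-1}$, which either strictly reduces the polygon or resolves $S$ as $\sn2$; (b) observe that any remaining pair $\{a,a^{-1}\}$ must appear interleaved with some other pair $\{b,b^{-1}\}$, because a non-interleaved pair would induce a separation of $S$ incompatible with connectedness; and (c) move the interleaved quadruple into a contiguous commutator block by a carefully chosen sequence of cuts along diagonals of $P$ followed by regluings along the previously identified edges. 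Iterating this procedure yields the canonical word in finitely many steps. This is the main technical obstacle: while no deep machinery is needed, the bookkeeping for cyclic words and the verification that each move leaves the quotient space diffeomorphic is the most delicate part of the argument, and is precisely where the ZIP proof of Conway streamlines the classical treatment.

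Finally, I would identify the resulting identification space with a connected sum of tori. The $4g$-gon with boundary word $a_1 b_1 a_1^{-1} b_1^{-1} \cdots a_g b_g a_g^{-1} b_g^{-1}$ is the standard polygonal model for the connected sum of $g$ copies of $\bbT^2$, which can be verified by recognizing the $4$-gon with word $aba^{-1}b^{-1}$ as a torus with an open disk removed and then inductively stacking these pieces along the boundary circles that arise from the diagonal cuts separating consecutive commutator blocks; the case $g=0$ (empty word) yields $\sn2$ directly as the polygon closes up. To reconcile the combinatorial parameter $g$ with the intrinsic definition, I would exhibit the curves obtained as the images of $a_1,\dots,a_g$ in $S$ as a maximal family of pairwise disjoint, jointly nonseparating simple closed curves, proving that $g$ coincides with the genus of $S$.
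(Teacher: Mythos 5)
The paper does not actually prove the Classification Theorem: it states it and defers to the references, namely the classical nineteenth-century argument and Conway's ``Zero Irrelevancy Proof'' in~\cite{franweeks}. So there is no in-text proof to compare against; what you have written is a sketch of the \emph{classical} triangulation-and-polygon-reduction argument, which is a legitimate route and is the one Conway's ZIP proof is designed to streamline.

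There is, however, a genuine gap in your step~(b). You assert that after deleting adjacent cancelling pairs, every remaining pair $\{a,a^{-1}\}$ must be interleaved with some other pair ``because a non-interleaved pair would induce a separation of $S$ incompatible with connectedness.'' That is false as stated. Consider the decagon with boundary word
\[
a\,b\,c\,b^{-1}\,c^{-1}\,a^{-1}\,d\,e\,d^{-1}\,e^{-1}.
\]
It has no adjacent cancelling pair, every letter appears once with each sign (so the quotient is orientable), and the quotient is a connected closed surface of genus~$2$; yet $\{a,a^{-1}\}$ is interleaved with \emph{no} other pair, since $\{b,b^{-1}\}$ and $\{c,c^{-1}\}$ lie entirely between $a$ and $a^{-1}$, while $\{d,d^{-1}\}$ and $\{e,e^{-1}\}$ lie entirely on the other side. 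What is missing is the \emph{vertex-reduction} step: before running stages~(b)--(c) you must first cut and paste until all polygon vertices are identified to a single point of $S$ (or until $S$ is recognized as $\sn2$). Only under that single-vertex hypothesis is the interleaving lemma true, and its correct justification is not a disconnection of $S$ but a counting argument on vertex classes: if $aW_1a^{-1}W_2$ has $W_1$ and $W_2$ sharing no letters, then the identification $a\sim a^{-1}$ pairs the endpoints so that vertices interior to the $W_1$-side are never identified with vertices interior to the $W_2$-side, forcing at least two vertex classes, a contradiction. The example above is exactly one where this reduction has not yet been carried out (it has two vertex classes), and the reduction move is what turns it into an interleaved word. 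Stage~(c) as written is vague but standard (the cut-and-paste move $aXbYa^{-1}Zb^{-1}W \leadsto aba^{-1}b^{-1}XWZY$), and the final identification of the $4g$-gon with $\#_g\bbT^2$ is fine; the one substantive defect is the missing vertex reduction and the incorrect rationale for interleaving.
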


We notice that the statement of the Classification Theorem
is more general than the presented above, which was a 
restriction to the orientable case. We also used the 
concept of connected sum, depicted
in Figure~\ref{figconnectedsum} and
whose definition 
we present next in dimensions 2 and 3.

\begin{definition}\label{consum}
Let $M_1$ and $M_2$ be two oriented
manifolds of the same dimension $n\in\{2,3\}$\footnote{The definition
of connected sum of $M_1$ and $M_2$ can be generalized for 
any $n>3$. However, in dimensions
higher than 3,
the differentiable structure of the connected sum $M_1\# M_2$ can depend 
not only on the chosen orientations on $M_1$ and $M_2$ but also
on the choice of the gluing map $f$, while if $n\in \{2,3\}$ this
does not occur.}. 
Then, the {\em connected sum}
of $M_1$ and $M_2$ is the manifold $M_1\# M_2$ obtained
by the following operation: let $B_1,\,B_2$ be respective 
$n-$balls in $M_1,\,M_2$ with boundaries $S_1,\,S_2$.
Let $f$ be an orientation-reversing 
diffeomorphism from $S_1$ to $S_2$. Then
$M_1\# M_2 = 
(M_1\setminus B_1)\cup (M_2\setminus B_2)/\sim$,
where we identify $S_1\ni x \sim f(x) \in S_2$.
\end{definition}

As previously explained, the Classification Theorem
allows us to list all closed, orientable surfaces in terms
of their genus, since such
a surface has genus $g$ if and only if
it is the connected sum of $\sn2$ with $g$ tori. Having this 
construction in mind, we may now present the Uniformization Theorem,
which was
conjectured by Felix Klein and Henri Poincaré in the 1880's 
and proved independently by Poincaré~\cite{poinc} and 
Koebe~\cite{koebe} in 1907.

\begin{figure} 
\centering
	\includegraphics[scale=0.38]{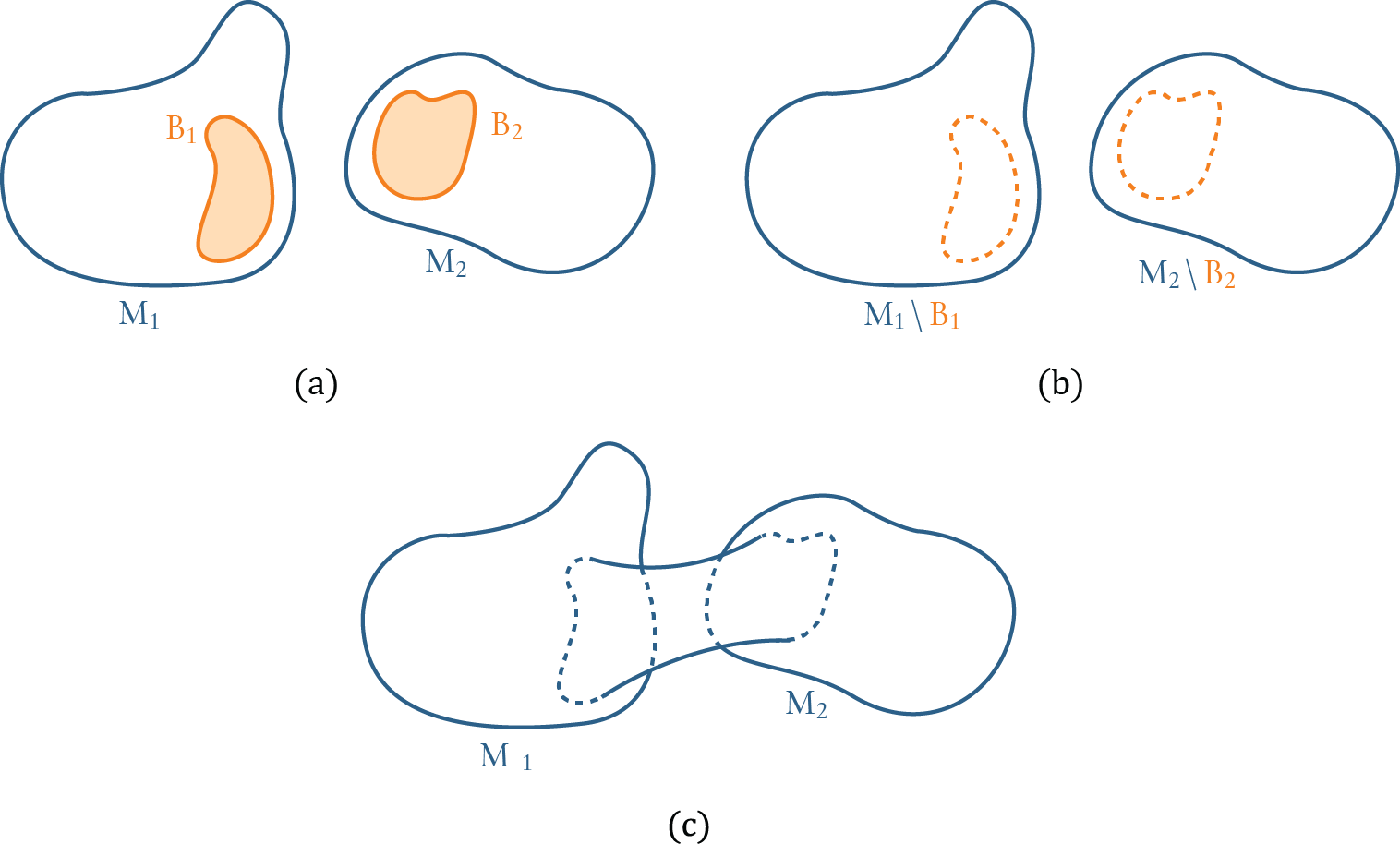}
\caption{The connected sum of $M_1$ and $M_2$, as in 
Definition~\ref{consum},
removes an $n$-ball $B_1$ from $M_1$ and an $n$-ball $B_2$, and then
joins the resulting manifolds with spherical 
boundary by an orientation-reversing
diffeomorphism, which has the visual effect of gluing them by a
cylindrical neck.
\label{figconnectedsum}}
\end{figure}

\begin{theorem}[Uniformization theorem]
\label{geomSf}
Let $S$ be an orientable, closed surface. Then, 
$S$ admits a metric of constant curvature $k$. Furthermore, 
if $g$ is the genus of $S$, the following hold:
\begin{enumerate}
\item $k >0$ if and only if $g=0$.
\item $k =0$ if and only if $g=1$.
\item $k <0$ if and only if $g\geq2$.
\end{enumerate}
\end{theorem}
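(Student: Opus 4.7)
The plan is to separate the Uniformization statement into two pieces: existence, for every closed orientable surface $S$, of a metric of constant curvature, and identification of the sign of that curvature in terms of the genus $g$. By the Classification Theorem just stated, $S$ is determined up to diffeomorphism by $g$, so it is enough to produce a constant-curvature model in each genus and then appeal to Gauss--Bonnet for the sign.

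For the model metrics, I treat three cases. When $g=0$ the round metric on $\sn2\subset\R^3$ has constant curvature $1$. When $g=1$ the Euclidean metric on $\R^2$ descends to $\bbT^2=\R^2/\Z^2$, giving a flat (curvature $0$) metric. When $g\geq 2$ the classical construction uses hyperbolic geometry: take a regular $4g$-gon $P\subset\hn2$ with all interior angles equal to $2\pi/(4g)$. Such a $P$ exists because, as one scales a regular $4g$-gon in $\hn2$, its interior angle varies continuously from the Euclidean limit $\pi-2\pi/(4g)$ (small polygons) down to $0$ (ideal polygons), so by the intermediate value theorem every value in $(0,\pi-2\pi/(4g))$ is attained, and in particular $2\pi/(4g)$ for $g\geq 2$. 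Gluing the sides of $P$ in pairs by hyperbolic isometries following the standard word $a_1b_1a_1^{-1}b_1^{-1}\cdots a_gb_ga_g^{-1}b_g^{-1}$ yields a closed surface of genus $g$, and the choice of angle makes the $4g$ vertices (which all get identified to a single point) contribute exactly $2\pi$, so that the hyperbolic metric descends smoothly across the vertex class.

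To close the biconditionals, for any metric of constant curvature $k$ on $S$ Gauss--Bonnet yields
$$k\cdot\text{Area}(S)=\int_S K_S=2\pi\chi(S)=2\pi(2-2g).$$
Since $\text{Area}(S)>0$, the sign of $k$ is forced to agree with the sign of $2-2g$, which proves the three equivalences in both directions simultaneously. The main obstacle is the hyperbolic construction for $g\geq 2$: besides the continuity-plus-intermediate-value argument producing the polygon, one has to verify that the side-pairing isometries generate a torsion-free subgroup of $\mathrm{Isom}(\hn2)$ acting properly discontinuously, so that the quotient is genuinely a smooth hyperbolic surface of the expected genus. This is where the geometry, rather than topology alone, is essential, but it is standard in the theory of Fuchsian groups.
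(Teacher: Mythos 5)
The paper does not prove Theorem~\ref{geomSf}; it attributes the result to Poincaré and Koebe and cites their 1907 papers, adding only the remark that the original (equivalent) formulation is about simply connected Riemann surfaces being conformal to the disk, plane, or sphere. Your proposal, by contrast, is an actual proof sketch, and a correct one, along the direct differential-geometric route: explicit constant-curvature models in each genus (round $\sn2$, flat $\bbT^2 = \R^2/\Z^2$, and a hyperbolic surface obtained by side-pairing a regular $4g$-gon in $\hn2$ with vertex angle $2\pi/4g$), together with Gauss--Bonnet to pin down the sign of $k$. This differs genuinely from the Riemann-surface/uniformization route the paper gestures at: that route proves a stronger statement (uniqueness of the simply connected Riemann surfaces up to conformal equivalence, hence a conformal normal form for every metric), whereas your approach gives existence of one constant-curvature metric per topological type with much less machinery, and gets the sign of $k$ for free from Gauss--Bonnet. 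Your intermediate-value argument for the existence of the $4g$-gon with the required vertex angle is correct for $g\geq 2$, and the Gauss--Bonnet computation $k\cdot\mathrm{Area}(S)=2\pi(2-2g)$ does settle all three biconditionals at once, since the ``only if'' direction in each is just the sign of $2-2g$ and the ``if'' direction follows because some constant-curvature metric exists in each genus. The one place you lean on citation rather than argument is the claim that the side-pairing isometries generate a discrete, torsion-free, properly discontinuous group with the polygon as fundamental domain; this is precisely the Poincaré polygon theorem, which you should name, since it is the nontrivial geometric input in the $g\geq 2$ case (the angle condition summing to $2\pi$ around the single vertex cycle is exactly its hypothesis). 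With that reference filled in, the proposal is complete and correct, just a different route than the one the paper points to.
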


In Section~\ref{secgeomm} (more precisely in Theorem~\ref{geomSf2}),
we will revisit 
Theorem~\ref{geomSf}, justifying the fact that
it is also known by the name
of {\em geometrization for surfaces}.
At the present moment, we will restrain ourselves to observe
that the uniformization theorem shows that a connected, closed 
and orientable surface has a special metric of constant
curvature, and its Riemannian universal covering is isometric
(after a homothety) to one and only one of
the space forms $\sn2,\,\R^2,\,\hn2$, which will be called
the {\em model geometry} for $S$. 
In particular, if $X$ is the model geometry for
a surface $S$ and $G = {\rm ISO}(X)$, there exists a subgroup
$H$ of $G$ such that $S$ is diffeomorphic to
the quotient $X/H$. We also point out that the original statement
of the Uniformization Theorem (which is well-known to be equivalent
to the one presented above) makes use of the theory of Riemann surfaces,
stating that that any simply connected Riemann surface is either conformal
to the open unit disk, to the complex plane or to the Riemann sphere.

\begin{example}
Let $S = \bbT^2$ be the torus. Then, the model geometry for
$S$ is $\R^2$ and if we let $H$ be the group generated by
two linearly independent translations in $\R^2$, it follows
that $S = \R^2/H$.
\end{example}

Generalizing the concept of surfaces, we will next introduce
the concept of {\em orbifolds}, which are certain not-so-well 
behaved quotients of manifolds. Orbifolds play a major role
in the geometrization of 3-manifolds, and this nomenclature
was introduced in the 1970's by William Thurston, after a 
vote by his students, but they appeared earlier
in the literature under the
name of V-manifolds.

\subsection{Orbifolds}
In some sense, orbifolds are structures used to understand group actions over manifolds, but differently from surfaces, where the group action is properly discontinuous\footnote{The action of a group $G$ over a manifold $X$ is properly discontinuous if for
every compact $K\subset X$ the number of elements $g$ of $G$ that satisfy $g K \cap K \neq \emptyset$ is finite.} and free\footnote{The action of a group $G$ over a set $X$ is called {\em free} if it has no fixed points unless it is the action of the identity element.}, on orbifolds the groups will act uniquely in a properly discontinuous manner. We also recall that the quotient of a topological space $X$ by a group $G$, denoted by $X/G$, is the set of orbits, together with the quotient topology.

Before giving the precise definition of an orbifold, we will 
present some examples to bring up some intuition to the reader.

\begin{example}\label{billiardeg}
Let $R \subset \R^2$ be a rectangle and $G$ the group
of isometries of $\R^2$ generated by the reflections
along the four lines containing the sides of $R$.
\begin{figure}[H] 	
\centering
	\includegraphics[scale=0.5]{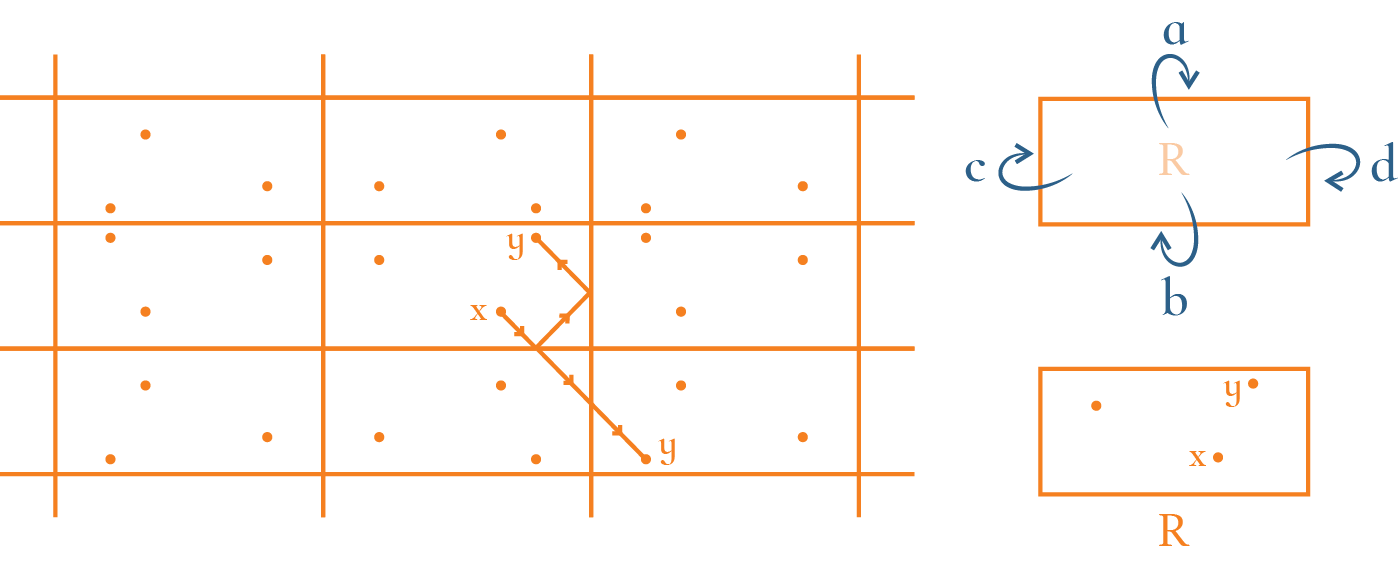}
\caption{Tiling of $\R^{2}$ generated by the reflections
$a,b,c,d$ over the sides of $R$.\label{img2.5}}
\end{figure}

The reflections $a,b$ (and also $c,d$) over two parallel
sides give rise to the free product group
$D_{\infty} = \Z_{2} \ast \Z_{2}$. 
Thus, in this case we have $G=D_{\infty} \times D_{\infty}$
and the quotient space $\R^{2} / G$ is the rectangle $R$.

This space is called the {\em rectangular billiard}, because
if we think the points of $R$ as pool balls, to hit
a certain point $y$ from another point $x$, it suffices
to aim in any other copy of $y$ into a reflected image of $R$,
as shown in Figure~\ref{img2.5}.
\end{example}

\begin{example}\label{pillowcaseeg}
Let $G = D_\infty\times D_\infty$ be the group as in 
Example~\ref{billiardeg} and let $H$ be the index-2 subgroup
of $G$ given by the orientation preserving isometries.
Then, $\R^{2} / H$ is the {\em pillowcase space},
which is topologically a 2-sphere with 4 singularities,
at the points corresponding to the vertexes of $R$ (see
Figure~\ref{figpillowcase}).

\begin{figure}[H] 	
\centering
	\includegraphics[scale=0.48]{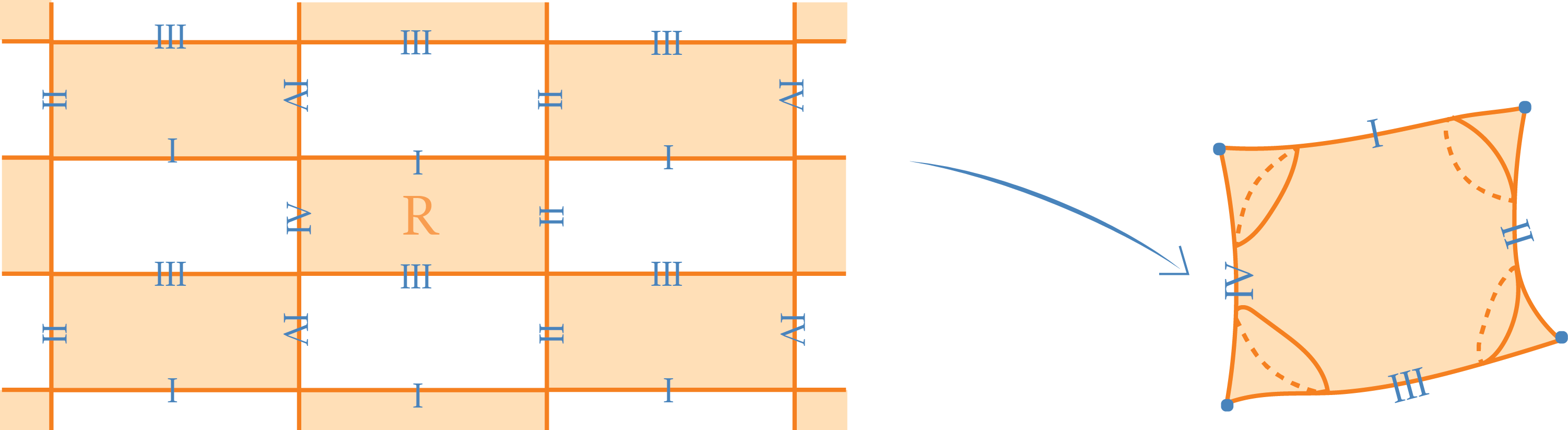}
\caption{Quotient of $\R^{2}$ by $H$. Note that $H$
maps the filled rectangles in the tiling 
into other filled rectangles with the
depicted orientation.\label{figpillowcase}}
\end{figure}
\end{example}

Taking into consideration the two examples above,
we say that an \textbf{orbifold} $\Oc$ 
is a topological space locally modelled in the quotient
of $\R^n$ by finite group actions with some
additional structure. We will first present the rigorous
definition given by Thurston~\cite{thur},
but we notice that the orbifolds that appear 
in order to understand geometrization
of 3-manifolds are simpler, and most of the times
the intuition behind the definition will suffice.

\begin{definition}[Thurston]
An $n$-dimensional orbifold
$\Oc$ is a Hausdorff space $X _{\Oc}$
(called the {\em base space}) endowed
with the following additional structure. There exists
a covering of 
$X_{\Oc}$ by a collection of open sets $\{ U_{i} \}$ that 
satisfy:
\begin{itemize}
\item The collection $\{U_i\}$ is closed with respect to 
finite intersections;
\item For each $U_i$ there is a finite group
$\G_i$ together with an action
of $\G_i$ over an open set $\wt{U}_i\subset \R^n$ and a 
homeomorphism
$\varphi_i\colon U_i \to \wt{U}_i/\G_i$;
\item If $U_i\subset U_j$ for some $i,j$, there exists
an injective homomorphism $f_{ij}\colon
\G_i \hookrightarrow \G_j$ and an embedding
$\wt{\varphi}_{ij}:\wt{U}_{i} 
\hookrightarrow \wt{U}_{j}$ that is
equivariant with respect to $f_{ij}$, in the sense that
$\wt{\varphi}_{ij}(\gamma x) 
= f_{ij}(\gamma)\wt{\varphi}_{ij}(x)$
for all $\g\in \G_i$, $x \in U_i$, so the diagram below commutes
\begin{center}
\includegraphics[width=0.4\textwidth]{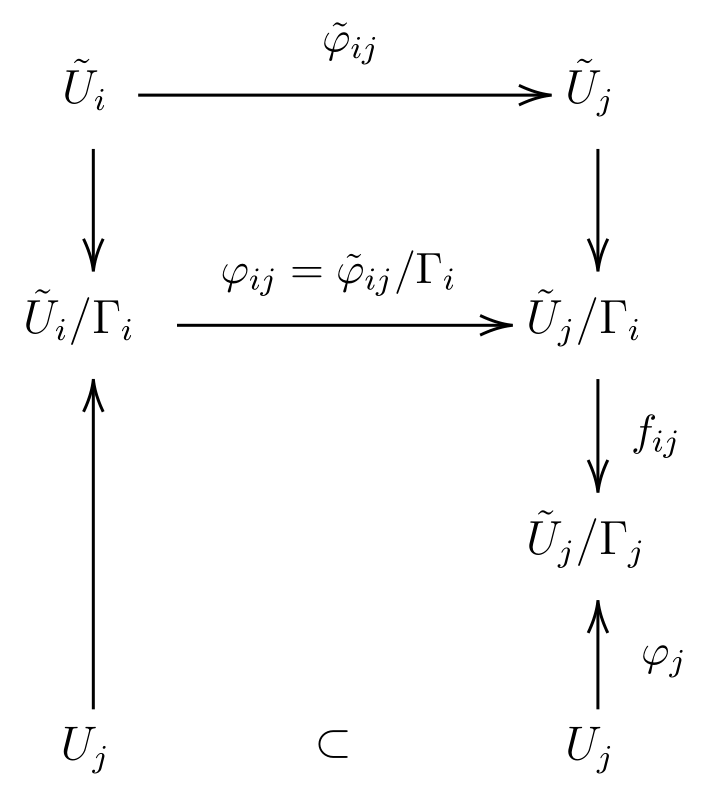}
\end{center}
\end{itemize}
We regard $\tilde{\varphi}_{ij}$ as being defined only up to composition with elements of $\Gamma_{j}$, and $f_{ij}$ as being defined up to conjugation by elements of $\Gamma_{j}$. It is not generally true that $\tilde{\varphi}_{ik} = \tilde{\varphi}_{jk} \circ \tilde{\varphi}_{ij}$ when $U_{i} \subset U_{j} \subset U_{k}$, but there should exist an element $\gamma \in \Gamma_{k}$ such that $\gamma \tilde{\varphi}_{ik} = \tilde{\varphi}_{jk} \circ \tilde{\varphi}_{ij}$ and $\gamma \cdot f_{ik}(g) \cdot \gamma^{-1} = f_{jk} \circ f_{ij}(g)$. 
\end{definition}

If $\Oc$ is an orbifold and $x\in \Oc$, we let 
$\G_x$ denote the {\em isotropy group of the point $x$}, i.e.,
$$\G_x = \{g\in \G_i\mid x\in U_i\text{ and }gx = x\},$$
which is well-defined up to conjugation.
In other words, in
a neighborhood $U_i = \wt{U}_i/\G_i$ of $x$,
$\G_x$ is the subgroup of $\G_i$ that acts over $\wt{U}_i$
leaving $\wt{x}$ invariant, where $\wt{x}\in \wt{U}_i$
projects over $x$.

\begin{definition}
The {\em singular locus} of an orbifold $\Oc$ is 
the set 
$\Sigma_{\Oc} = \{ x\in \Oc \mid \Gamma_{x} \neq \{1\}\}$,
which means that
$\Sigma_{\Oc}$ is the subset of $\Oc$ for which there
exists a correspondent neighborhood $\wt{U}$ with
nontrivial isotropy group.
If $p\in \S_{\Oc}$, we say that $p$ is a {\em singular} point.
Otherwise, $p$ is called {\em regular}. 
Note that the singular locus is a closed subset of $\Oc$.
\end{definition}

In this article, we are interested in 2-dimensional orbifolds,
where the {\em singular locus} is simpler than in the general
situation, admitting a simple description as we next present.

\begin{proposition}[Thurston~\cite{thur}] \label{locussin}
The {\em singular locus} of a 2-dimensional orbifold
has one of the three local models below:
\begin{enumerate}
\item {\bf Lines of reflection}. $\R^{2}/ \Z_{2}$, where 
$\Z_{2}$ acts by reflection along a line in $\R^2$;
\item {\bf Elliptic points of order $n\geq2$}.
$\R^{2}/ \Z_{n}$, where $\Z_{n}$ acts on $\R^2$ by a rotation
of $2\pi/n$;
\item {\bf Corner reflectors of order $n$}.
$\R^{2}/ D_{n}$, where $D_{n}$ is the dihedral group 
of order $2n$, with representation
$\langle a,b \ : \ a^{2} = b^{2} = (ab)^{n} = 1 \rangle $, 
and the generators $a$ and $b$ correspond to reflections
along lines intersecting themselves at an angle $\pi/n$.
\end{enumerate}
\end{proposition}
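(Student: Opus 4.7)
The plan is to reduce the statement to the classification of finite subgroups of $O(2)$, via a linearization argument near a singular point.

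First, I would fix a singular point $p\in\S_\Oc$ and choose a chart $U_i$ around it, writing $U_i=\wt{U}_i/\G_i$ with $\wt{x}\in\wt{U}_i$ projecting onto $p$. Passing to the isotropy subgroup $\G_p\subset\G_i$ and shrinking to a $\G_p$-invariant neighborhood of $\wt{x}$, the local model of $\Oc$ at $p$ is $V/\G_p$, where $\G_p$ is a finite group of diffeomorphisms of an open set $V\subset\R^2$ fixing $\wt x$. Thus the question becomes: classify, up to smooth conjugacy, the effective actions of a finite group $\G_p$ on a neighborhood of the origin in $\R^2$ with $0$ as a fixed point.

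Second, I would linearize the action. Averaging any Riemannian metric over the finite group $\G_p$ produces a $\G_p$-invariant metric, and then the exponential map at $\wt{x}$ is $\G_p$-equivariant (this is Bochner's linearization theorem). Hence, up to conjugation by a diffeomorphism, $\G_p$ is realized as a finite subgroup of $O(2)$ acting on $\R^2$. This step is the main obstacle: without the linearization, one cannot hope to get a clean list of local models, and one must use the smoothness of the orbifold charts in an essential way.

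Third, I would invoke the classical classification of finite subgroups of $O(2)$: every such subgroup is either cyclic, generated by a rotation of order $n$, or dihedral $D_n$, generated by such a rotation together with a reflection. In particular, the only possibilities for a nontrivial $\G_p$ are:
\begin{enumerate}
\item $\G_p=\Z_2$ generated by a single reflection, giving the quotient $\R^2/\Z_2$ with singular set a line (the fixed axis) — this is the \emph{line of reflection} case;
\item $\G_p=\Z_n$ with $n\geq 2$ generated by a rotation of angle $2\pi/n$, whose only fixed point is the origin — this is the \emph{elliptic point of order $n$} case;
\item $\G_p=D_n$ with $n\geq 2$, generated by two reflections whose axes meet at angle $\pi/n$ (so that their composition is a rotation of order $n$) — this is the \emph{corner reflector of order $n$} case.
\end{enumerate}

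Finally, I would check that these three models are genuinely realized and pairwise non-conjugate (one can distinguish them by the structure of the fixed point set of $\G_p$: a line, an isolated point, or an isolated point that is the common endpoint of two reflection lines), which together with Bochner's linearization confirms that every singular point of $\Oc$ falls in exactly one of the three cases listed.
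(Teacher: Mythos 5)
Your proposal follows essentially the same route as the paper: both reduce the local model at a singular point to the quotient of a neighborhood of the origin in $\R^2$ by a finite subgroup of ${\rm O}(2)$, and then invoke the classical classification of such subgroups (cyclic rotation groups and dihedral groups). The paper simply asserts the linearization fact and refers to Thurston for details, while you correctly fill in that gap by naming Bochner's linearization theorem and sketching the averaging-plus-exponential-map argument, which is a welcome clarification but not a different method.
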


The proof of the proposition uses that
the only finite subgroups of the orthogonal group
${\rm O}(2)$ are the ones described above, together with the
fact that
given a local coordinate system $U = \wt{U}/\G$ to an orbifold
$\Oc$, there exists a homeomorphism between a neighborhood
of $U$ and a neighborhood of the origin in the orbifold
$\R^2/\G$, where $\G\subset {\rm O}(2)$ is a finite subgroup.
For more details, see Thurston~\cite[Proposition~13.3.1]{thur}

\begin{figure} 	
\centering
	\includegraphics[scale=0.22]{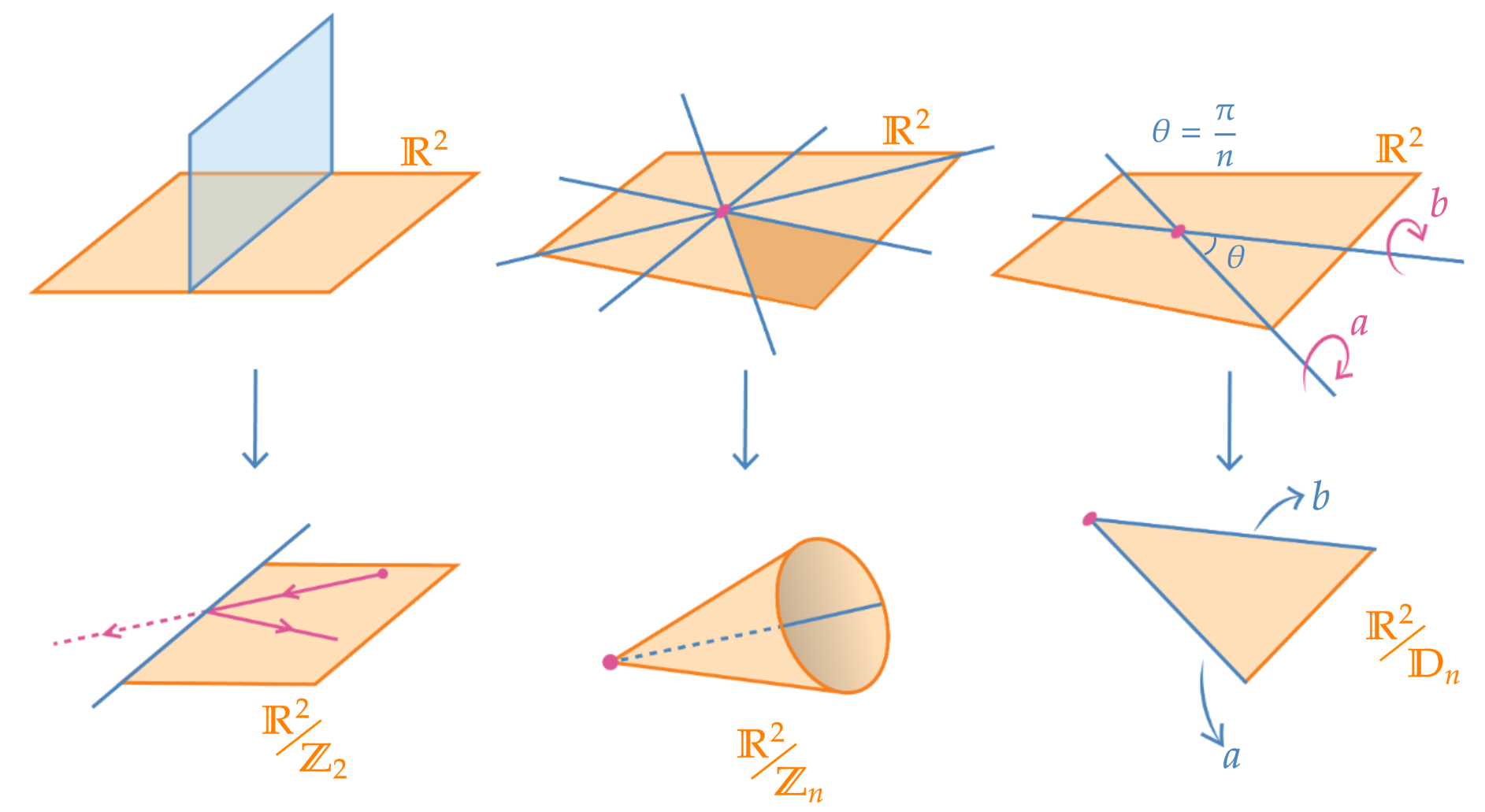}
\caption{Quotients of $\R^{2}$ by the groups $\Z_{2}, \Z_{n}$ and
$D_{n}$, giving the respective singular loci for a 2-dimensional
orbifold: a line of reflection, an elliptic point of order $n$ 
and a corner reflector of order $n$.}
\end{figure}

\begin{example}
Recall Example~\ref{billiardeg}, where $R$ is a rectangle in $\R^{2}$ and $G$ is the group of isometries generated by reflections $a,b,c,d$. Now that we have the definition of orbifold and singular locus, we may explore the example with more depth.

Points in $R$ can be inside the rectangle, or on the edge, or a vertex. It is not hard to notice that points inside the rectangle are not fixed by any element of $G$. Given $x \in int(R)$ and $U_{i}$ a neighborhood of $x$, there is a homeomorphism $\varphi_{i}: U_{i} \to \tilde{U}_{i}$, where $\tilde{U}_{i} \subset \R^{2}$.

If $x \in R$ is a point in the edge labeled as $d$, but not a vertex, for each neighborhood $U_{i}$ of $x$ we have $\varphi_{i}: U_{i} \to \tilde{U}_{i}/ \Gamma_{i}$, where $\tilde{U}_{i} \subset \R^{2}$. $\Gamma_{i}$ is the group generated by the reflection $d$ and $d^{2} = id$, therefore, $G \supset \Gamma_{i} \cong \Z_{2}$ and $d$ is a line of reflection.

If $x$ is the vertex of the edges $a$ and $d$, for each neighborhood $U_{i}$ of $x$ we have $\varphi_{i}: U_{i} \to \tilde{U}_{i}/ \Gamma_{i}$, where $\tilde{U}_{i} \subset \R^{2}$. $\Gamma_{i}$ is generated by $a$ and $d$ according to the relations $a^{2} = id$, $d^{2} = id$, $(ad)^{2} = id$. By definition $G \supset \Gamma_{i} \cong D_{2}$ and the vertex is a corner reflector. In fact, all vertices of $R$ are corner reflectors. 
\end{example}

\begin{example}
In Example~\ref{pillowcaseeg} we have $H \subset G $ the group of orientation-preserving isometries generated by reflections $a,b,c,d$ over the lines containing the four sides of $R$. It is straightforward to see that $d\circ a$, $a\circ c$, $c\circ b$ and $b\circ d$ are all elements of $H$, each of which leaves one of the four vertexes $P_1,\,P_2,\,P_3,\,P_4$ of the original rectangle $R$ invariant. Thus, in the pillowcase orbifold $\Oc = \R^2/H$, $\{P_1,\,P_2,\,P_3,\,P_4\}\subset \Sigma_{\Oc}$. In fact, these are all the singular points of this orbifold.

If $x$ is the vertex of the edges $a$ and $d$, for each neighborhood $U_{i}$ of $x$ we have $\varphi_{i}: U_{i} \to \tilde{U}_{i}/ \Gamma_{i}$, where $\tilde{U}_{i} \subset \R^{2}$. $\Gamma_{i}$ is generated by $d\circ a$, a rotation by $\pi$, therefore we have that $\Gamma_{i} \cong \Z_{2}$ and $x$ is an elliptic point of order $2$,
and the same holds for all vertices of $R$.
\end{example}

The next proposition is of great importance to the
study of orbifolds in the context of geometrization. It may
even be used as an intuitive (local) definition of an orbifold.

\begin{proposition}[{Thurston~\cite[Proposition~13.2.1]{thur}}]\label{propdis}
If $\Ma$ is a manifold and $\Gamma$ is a group acting properly 
discontinuously on $\Ma$, then $\Ma / \Gamma$ 
has an orbifold structure.
\end{proposition}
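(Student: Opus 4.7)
The plan is to build an orbifold atlas on $\mathcal{M}/\Gamma$ directly from the properly discontinuous action, using the obvious candidate charts: for each $x \in \mathcal{M}/\Gamma$, pick a lift $\tilde{x} \in \mathcal{M}$ and look for a small neighborhood of $\tilde{x}$ that is invariant under the isotropy group $\Gamma_{\tilde{x}} = \{\gamma \in \Gamma : \gamma \tilde{x} = \tilde{x}\}$ and disjoint from all its other translates. First I would check that $\Gamma_{\tilde{x}}$ is finite: since the action is properly discontinuous and $\{\tilde{x}\}$ is compact, only finitely many $\gamma \in \Gamma$ can fix $\tilde{x}$. Then I would use proper discontinuity once more to produce a small open neighborhood $W$ of $\tilde{x}$ such that $\{\gamma \in \Gamma : \gamma W \cap W \neq \emptyset\}$ is finite; discarding the finitely many $\gamma \notin \Gamma_{\tilde{x}}$ by shrinking $W$ further (using the Hausdorff property of $\mathcal{M}$ to separate $\tilde{x}$ from each $\gamma \tilde{x}$ with $\gamma \notin \Gamma_{\tilde{x}}$), I get a neighborhood meeting only its $\Gamma_{\tilde{x}}$-translates.

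Next I would symmetrize: set $\widetilde{U} = \bigcap_{\gamma \in \Gamma_{\tilde{x}}} \gamma W$, which is an open $\Gamma_{\tilde{x}}$-invariant neighborhood of $\tilde{x}$ on which only elements of $\Gamma_{\tilde{x}}$ act nontrivially. Choosing a coordinate chart identifying $\widetilde{U}$ with an open set of $\mathbb{R}^n$, the image $U$ of $\widetilde{U}$ in $\mathcal{M}/\Gamma$ is canonically homeomorphic to $\widetilde{U}/\Gamma_{\tilde{x}}$: indeed the natural continuous bijection is open because the quotient map $\mathcal{M} \to \mathcal{M}/\Gamma$ is open and $\widetilde{U}$ is saturated with respect to its $\Gamma_{\tilde{x}}$-action in the relevant sense. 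This produces a candidate chart $\varphi \colon U \to \widetilde{U}/\Gamma_{\tilde{x}}$ around every point of $\mathcal{M}/\Gamma$.

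After that, I would close the collection under finite intersections. Given two such charts $U_i = \widetilde{U}_i/\Gamma_i$ and $U_j = \widetilde{U}_j/\Gamma_j$, a component $V$ of $U_i \cap U_j$ lifts to a connected component $\widetilde{V}$ of the preimage in $\mathcal{M}$, and the subgroup $\Gamma_V = \{\gamma \in \Gamma : \gamma \widetilde{V} = \widetilde{V}\}$ is a finite subgroup (conjugate into both $\Gamma_i$ and $\Gamma_j$) giving a chart $V = \widetilde{V}/\Gamma_V$; taking all such $V$'s across all pairs and iterating yields an intersection-closed cover. The verification of the inclusion data is then essentially bookkeeping: whenever one such chart $U_i \subset U_j$, a lift of $\widetilde{U}_i$ sits inside some $\Gamma_j$-translate of $\widetilde{U}_j$, and the corresponding inclusion $f_{ij}\colon \Gamma_i \hookrightarrow \Gamma_j$ is the one identifying $\Gamma_i$ with the stabilizer in $\Gamma_j$ of that translate, which is injective because $\Gamma_i$ acts faithfully on $\widetilde{U}_i$.

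The main obstacle, and the step that requires the most care, is the coherence condition for triple inclusions $U_i \subset U_j \subset U_k$: the maps $\widetilde{\varphi}_{ij}$ and $f_{ij}$ are only canonical up to the ambiguity of which lift of $\widetilde{U}_i$ into $\widetilde{U}_j$ one selects, so one must verify that the composed embedding $\widetilde{\varphi}_{jk} \circ \widetilde{\varphi}_{ij}$ agrees with $\widetilde{\varphi}_{ik}$ only after postcomposition with some $\gamma \in \Gamma_k$, and that this same $\gamma$ conjugates $f_{ik}$ to $f_{jk} \circ f_{ij}$. This amounts to chasing which lifts in $\mathcal{M}$ were chosen at each stage and using that two lifts differ by an element of $\Gamma$ which, upon restriction, lands in $\Gamma_k$; once the chart system is set up as above, this is a direct verification rather than a subtle argument, so the proof reduces to careful unwinding of the construction.
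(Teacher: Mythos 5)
The paper does not supply a proof, deferring directly to Thurston's notes; your sketch is precisely Thurston's argument (finite stabilizer from proper discontinuity applied to a point, symmetrized $\Gamma_{\tilde{x}}$-invariant neighborhood meeting only its stabilizer translates, identification $p(\widetilde{U}) \cong \widetilde{U}/\Gamma_{\tilde{x}}$, then coherence bookkeeping). The only details you skim are that the initial neighborhood $W$ should be taken precompact so that the compact-set definition of proper discontinuity applies, and that injectivity of the $f_{ij}$ requires the stabilizer to act effectively on $\widetilde{U}$ --- true by Newman's theorem in the topological category, or by unique continuation of isometries/diffeomorphisms in the smooth one --- but both are routine and do not constitute gaps.
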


The above proposition shows that the structure of an
orbifold can be reasonably wild. 
Next, we present a definition that will be used 
to introduce the concept of a {\em good orbifold}.

\begin{definition}
Let $\Oc$ and $\wt{\Oc}$ be two orbifolds
with respective base spaces $X,\,\wt{X}$. We say
that $\wt{\Oc}$ is a
{\em covering orbifold} of $\Oc$ if there is a 
projection
$p\colon \wt{X} \longrightarrow X$ such that
each $x \in X$ admits a neighborhood 
$U = \tilde{U}/\Gamma$ ($\tilde{U}$ is an open subset of
$\R^{n}$) for which each component
$v_{i}$ of $p^{-1}(U)$ is isomorphic to 
$\tilde{U}/\Gamma_{i}$, where $\Gamma_{i}$ is a subgroup of
$\Gamma$ and the isomorphism respect the projections
\begin{center}
\includegraphics[scale=0.25]{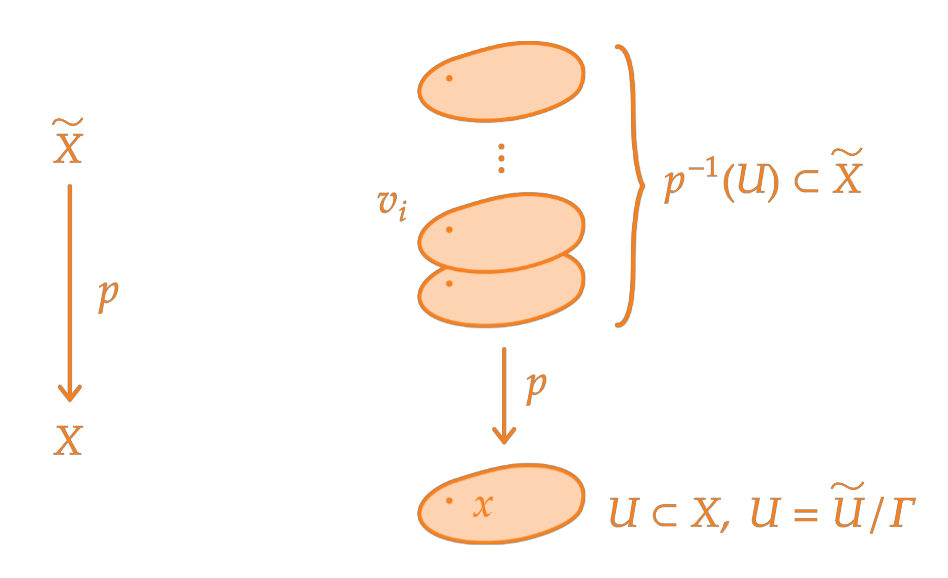}
\end{center}
\end{definition}

\begin{remark}
The main distinction between the concept of covering orbifolds
and the usual notion of covering spaces of topological spaces
is that when an orbifold $\wt{\Oc}$ covers another
orbifold $\Oc$, the components of the
inverse images of an open set $U$ of $\Oc$ may not be 
isomorphic to themselves or even to $U$. 
This is because each component $v_i$ that projects onto $U$
is a quotient of an open set of $\R^n$ by a subgroup 
$\Gamma_{i} \subset \Gamma$, see
the example below.
\end{remark}

\begin{example}
It is not difficult to see that the {\em pillowcase orbifold}
$\R^2/H$
of Example~\ref{pillowcaseeg} is a covering orbifold
of the rectangular billiard $\R^2/G$ 
of Example~\ref{billiardeg}. However, each component of
the inverse images of neighborhoods of each of 
the four vertexes $P_1,\,P_2,\,P_3,\,P_4$
of $\R^2/G$ are not isomorphic to
the image of the projection.
\begin{figure}
\centering
	\includegraphics[scale=0.55]{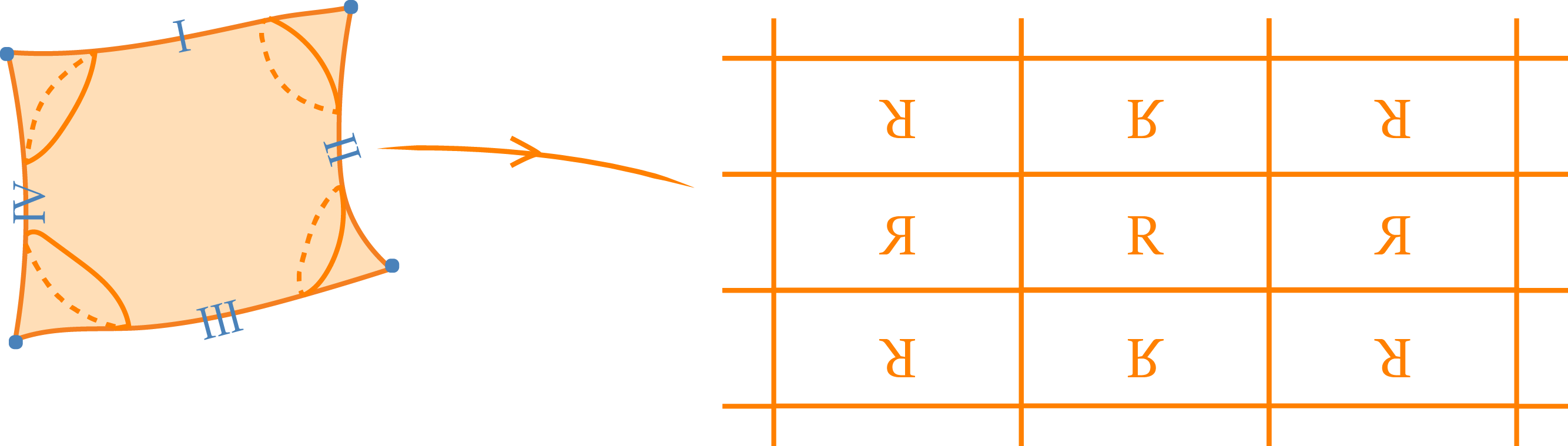}
\caption{A rectangular billiard with the pillowcase orbifold
as a covering orbifold.}
\end{figure}
\end{example}

\begin{definition}
An orbifold $\Oc$ is called a {\em good} orbifold if $\Oc$
admits a covering orbifold that is a differentiable manifold.
Otherwise, $\Oc$ is called a {\em bad} orbifold.
\end{definition}

Since the pillowcase and the billiard orbifolds are both covered by 
$\R^2$, they are both good orbifolds. In fact, in dimension two 
there are only a few bad orbifolds and their classification
can be seen in~\cite[Theorem~13.3.6]{thur}.

\section{3-manifolds}\label{sec3manf}

After noticing that geometry and topology may be deeply related
and presenting some initial concepts such as orbifolds,
we will now start the study of the geometric topology
of 3-manifolds, with the fundamental goal
of presenting the {\em geometrization theorem} (for orientable 3-manifolds).

Recall Theorem~\ref{geomSf}, where given an orientable, closed
surface $S$, we could find a {\em special metric} for $S$,
as being a metric of constant curvature $k$ being $-1,0$ or $1$,
the number $k$ depending uniquely on the topology of $S$.
The attempt to generalize this result to the class
of orientable closed 3-manifolds is quite natural.
However, (presently) it is easy to see that the
manifold $\sn2\times\sn1$ is an orientable, closed 3-manifold
that does not admit a metric of constant curvature. Indeed if
that was the case, its Riemannian universal cover would be
$\sn2\times\R$ with a metric of constant curvature. Since
$\sn2\times\R$ is simply connected, it would be isometric 
to a space form and diffeomorphic to either $\sn3$ or to $\R^3$.

This creates a great difficulty on understanding what could be
{\em the best metric} for a 3-manifold. And even Poincaré,
after proving the Uniformization Theorem for surfaces,
struggled with this question. After trying to generalize this
to 3-manifolds, he noticed it wouldn't be an easy task.
To get closer to this geometric classification,
Poincaré developed several topological concepts, such as 
the homology groups and the fundamental group of a manifold, 
arriving at the following question~\cite[Page 110]{PoincConj}.

\begin{center}
{\em
Consider a compact 3-dimensional manifold V without boundary. 
Is it possible that the fundamental group of V could be trivial, 
even though V is not homeomorphic to the 3-dimensional sphere?}
\end{center}

\noindent After adding that 
{\em cette question nous entraînerait trop loin} 
(this question would take us too far) and trying to prove 
this result, his question became known as the {\em Poincaré's 
Conjecture}, a widely known problem that, in the year 2000,
was deemed by the Clay Mathematics Institute, one
of the seven {\em millennium problems} (presently, the only one
with a complete solution).

Over the next sections, we will present
the theory of the topology and geometry of 3-manifolds, showing 
the notion of a {\em model geometry} and explaining 
the geometrization theorem, which proves (and generalizes)
Poincaré's conjecture by decomposing any orientable, closed
3-manifold into components, each of which admits
a model geometry that depends uniquely on its topology.

\subsection{Seifert fibered spaces}
The next step towards geometrization is to present the concept
of a {\em Seifert fibered space}. 
There are several ways of introducing those spaces, and we
chose to present them from a geometric point of view,
as 3-manifolds that
admit a decomposition by circles (or fibers) with a certain
structure called a {\em Seifert fibration}. In particular,
we must distinguish these two nomenclatures: 
a Seifert fibered space
will be the total space of a Seifert fibration (and a Seifert fibered
space may have distinct Seifert fibrations related to it).

As suggested by the denomination of the manifolds studied in this section, 
Seifert fibered spaces were first studied
by H. Seifert in the 1930's, with the intention of getting 
closer to the topological classification of closed 3-manifolds (or the 
``{\em homeomorphism problem for 3-dimensional closed manifolds}''). 
In~\cite{Seifertpaper} (see the 
book~\cite{Seifertbook}, which contains a geometric introduction to
Topology and also an English 
translation of~\cite{Seifertpaper}),
Seifert was able to completely classify, up to fiber-preserving 
homeomorphisms, his fibered spaces ({\em gefaserte Räume}),
and his research was a fundamental step towards geometrization, 
as it will become clear in the JSJ
decomposition presented in Theorem~\ref{jsj} below.

Before we present the precise definition of a Seifert fibered space, 
we will introduce the concept of a {\em fibered solid torus}.
Consider the (closed) unit
disk of dimension two, 
$\D^{2}: = \{ (x, y) \in \R^{2} \ : \ x^{2}+y^{2} \leq 1 \}$
and let the {\em trivial fibered solid torus}
be the product $\D^2\times \sn1$, endowed with the product 
foliation by circles, so that, for each $y\in \bbD^2$, 
$\{y\}\times\sn1$ is a fiber.
\begin{figure}[h] 	
\centering
	\includegraphics[scale=0.20]{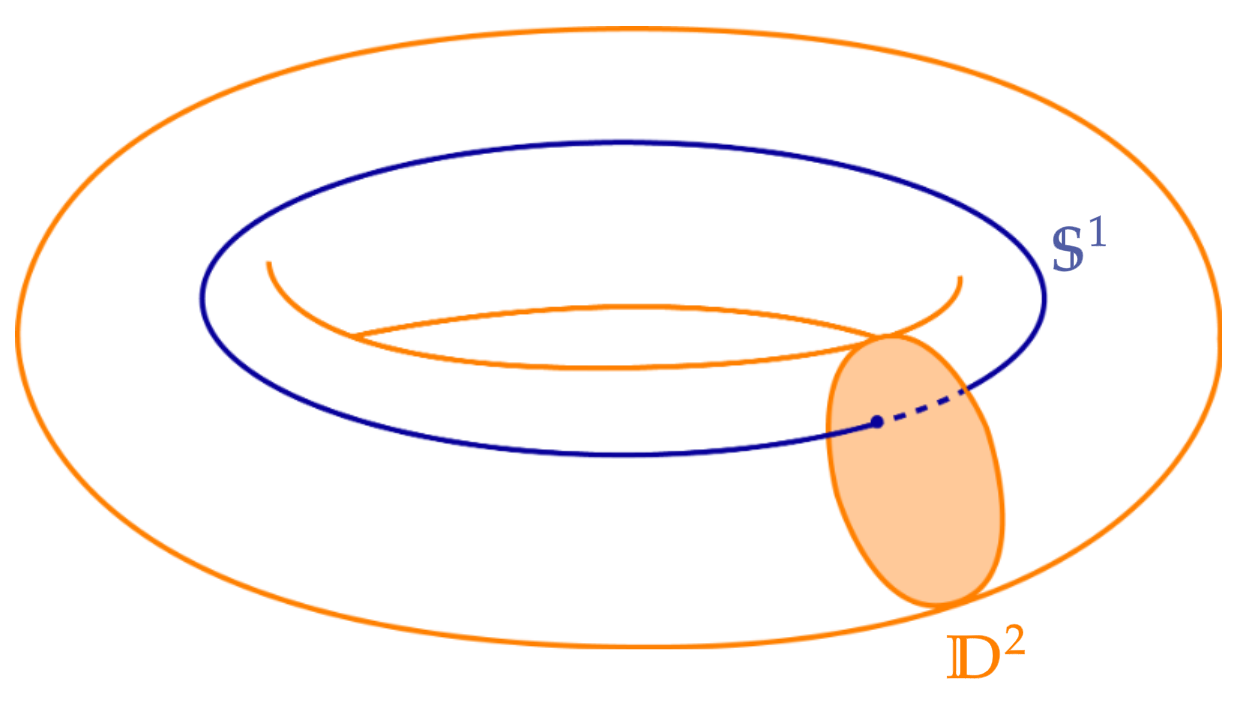}
\caption{Trivial fibered solid torus: each fiber is
of the type $\{y\}\times \sn1$, for $y\in \bbD^2$.}
\end{figure}

This trivial decomposition of $\bbD^2\times\sn1$ by circles
is a particular case of the next more general construction, which,
in the context of orientable Seifert
fibered spaces, provides the local picture.

\begin{definition}
Given a pair $p,\,q$ of co-prime integers with $p> 0$, 
the {\em standard fibered solid torus} $T(p,q)$ is 
obtained from the
trivial fibered solid torus $\D^2\times \sn1$ 
by cutting along
a disk $\D^2\times \{y\}$ and gluing it back together 
after a twist of $\frac{2q\pi}{p}$ in one of its sides,
i.e.,
$$ T(p,q) = 
\frac{\D^{2} \times [0,1] }{ \{ (z,0)  \sim  ( \psi _{p,q} (z), 1) \} },$$
where $ \psi _{p,q} : \D^{2} \rightarrow \D^{2} $ is defined
by $\psi _{p,q} (z) = e^{\frac{2\pi q}{p}i} z$. Then, $T(p,q)$ is a
solid torus, naturally endowed with a fibration by circles where
we have a {\em central fiber} (or {\em core fiber}) that comes
from $\{0\} \times [0,1]$ and any other fiber 
rotates $p$ times around the generator
of the fundamental group of the solid torus and $q$ times
around the central fiber.
\end{definition}

\begin{example}
Let $p = 8,\,q = 3$ and let $T(8,3)$ be the corresponding
standard fibered solid torus, given by the identification
$(\bbD^2\times[0,1])/\sim$ where $\bbD^2\times\{0\}\ni (z,0) \sim 
(e^{\frac{3\pi}{4}i}z,1)\in \bbD^2\times\{1\}$.

Let $x\in \bbD^2$ be given. If $x = 0$, then the line
$\{0\}\times [0,1]$ in $\bbD^2\times[0,1]$ descends
to the central fiber of $T(8,3)$. Otherwise, let $x_1 = x$
and let $x_{j+1} = e^{\frac{3\pi}{4}i}x_j$, for $j\in \N$. Then
$x_9=x_1$ and the union of the eight lines $\{x_j\}\times[0,1]$, 
$j=1,2,\ldots,8$,
descend to $T(8,3)$ as one fiber, that intersects 
any of the meridianal disks $\bbD^2 \times\{t\}$ in $T(8,3)$ eight times.
\begin{figure}[H] 	
\centering
	\includegraphics[scale=0.46]{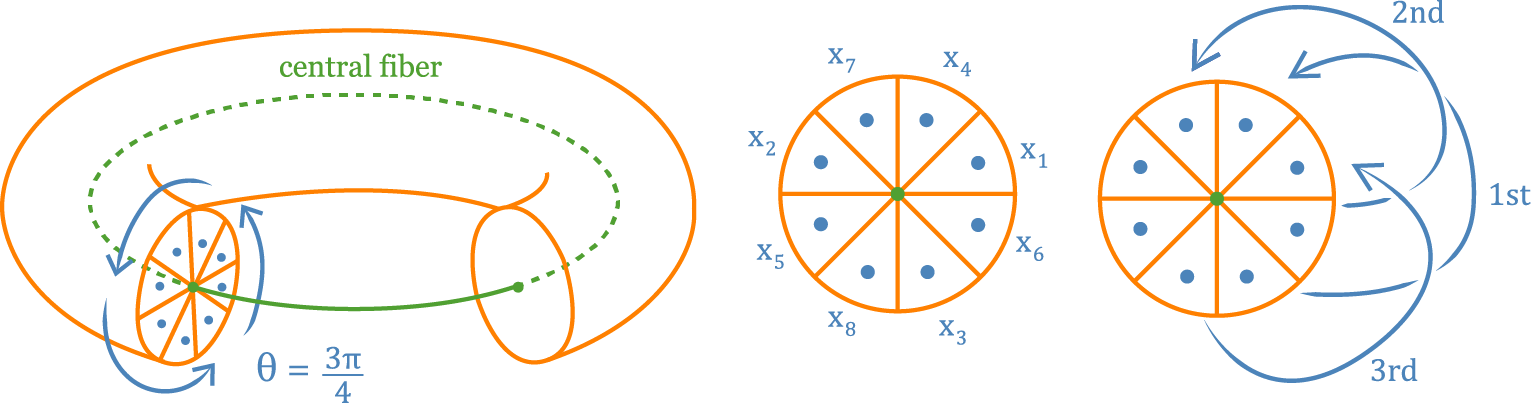}
\caption{$x_{j}$ is the intersection of the fiber with the 
disk after $j-1$ turns.}
\end{figure}
\end{example}

In the torus $T(p,q)$, $p$ is called the 
{\em multiplicity of the central fiber}.
When $p>1$, we say that the central fiber is {\em singular},
because this fiber goes around the solid torus
one time, while all the other fibers go around $p$ times.
Otherwise, the central fiber is called {\em regular}.

Having defined the structure of the standard fibered 
solid torus, we will next present the concept of a
Seifert Fibered space in the context of orientable 3-manifolds.

\begin{definition}\label{defSFSpace}
A {\em Seifert fibered space} is an orientable 3-manifold $\Ma$
that admits a decomposition into circles (called {\em fibers})
such that each fiber admits a neighborhood $U$ that is the union of
other fibers and $U$ is isomorphic (as
a fibered space) to a standard fibered solid torus.
A fiber of a Seifert fibered space is called {\em regular}
if it admits a neighborhood isomorphic (as a fibration) to the
trivial fibered solid torus. Otherwise, it is called
{\em singular}.
\end{definition}

We notice that the same manifold $\Ma$ may admit 
more than one distinct decomposition
by circles. Hence, whenever we say that $\Ma$ is a Seifert fibered
space, we are assuming that the circle decomposition is fixed. 
We also observe that there are manifolds which do not admit
any such decomposition, such as open simply 
connected spaces, see~\cite{Seifertpaper,Seifertbook}.

\begin{example}\label{ex.r3}
Let $C = [0,1]\times[0,1]\times[0,1]$ be a solid cube and
let $\cC$ be the 3-manifold obtained by identifying
the opposite sides of $C$ as follows (see Figure~\ref{figex.r3}):
$$(0,y,z) \sim (1,y,z)\text{ (left to right),}\quad
(x,0,z)\sim (x,1,z)
\text{ (front to back)},$$
$$(x,y,0)\sim (1-x,1-y,1)\text{ (top to a 180}^\circ\text{ rotation of the bottom)}.$$

\begin{figure}[H] 	
\centering
	\includegraphics[scale=0.5]{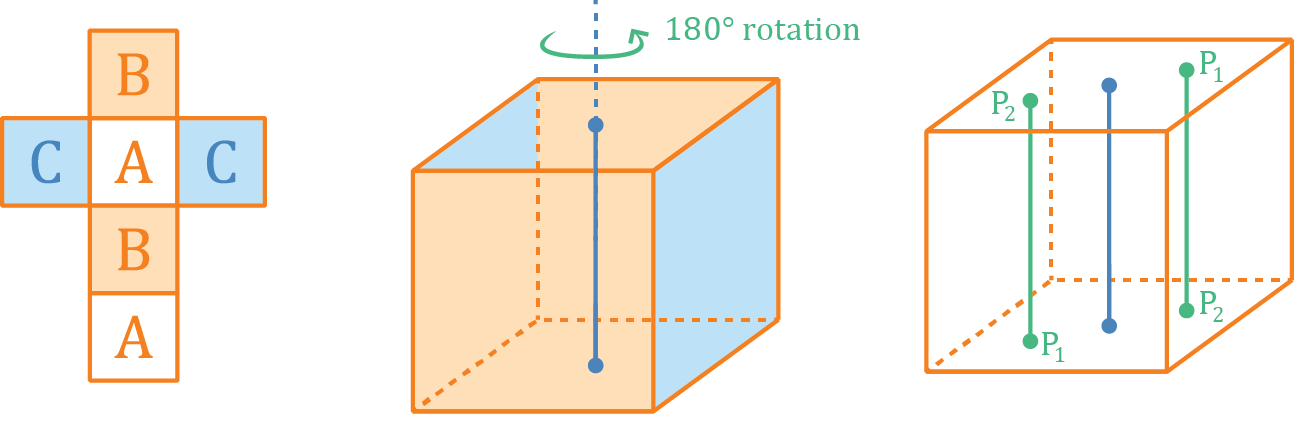}
\caption{\label{figex.r3}
Identifications of the cube constructing the Seifert fibered
space of Example~\ref{ex.r3}.}
\end{figure}
We may see that
the closed, orientable 3-manifold $\cC$ is a Seifert fibered 
space. Indeed, $\cC$ admits a decomposition in circles
that arise from the vertical lines in $C$, joining the top
side to the bottom side of $C$. In the remainder of this
example, we will let $I_{(x,y)}\subset \cC$ denote the equivalence
class of the vertical segment $\{(x,y,t)\mid t\in[0,1]\} \subset C$
and we will let $\gamma_{(x,y)}$ denote the respective fiber in
$\cC$ that contains $I_{(x,y)}$.
We have that, for all $(x,y)\in [0,1]\times[0,1]$, 
$\gamma_{(x,y)} = I_{(x,y)}\cup I_{(1-x,1-y)} = \gamma_{(1-x,1-y)}$ 
is a regular fiber
with the following four exceptions, each of which has
a neighborhood isomorphic
to $T(2,1)$ (see Figure~\ref{figcubefiber}).
\begin{itemize}
\item $\gamma_{(0,0)}=\gamma_{(1,0)} =\gamma_{(1,1)} = \gamma_{(0,1)}$,
which comes from $I_{(0,0)} = I_{(0,1)} = I_{(1,1)}= I_{(1,0)}$;
\item $\gamma_{(0,1/2)} = \g_{(1,1/2)}$, which comes from 
$I_{(0,1/2)} = I_{(1,1/2)}$;
\item $\gamma_{(1/2,0)} = \g_{(1/2,1)}$, which comes from 
$I_{(1/2,0)} = I_{(1/2,1)}$;
\item $\gamma_{(1/2,1/2)}$, which comes from $I_{(1/2,1/2)}$.
\end{itemize}

\begin{figure}[H] 	
\centering
	\includegraphics[scale=0.5]{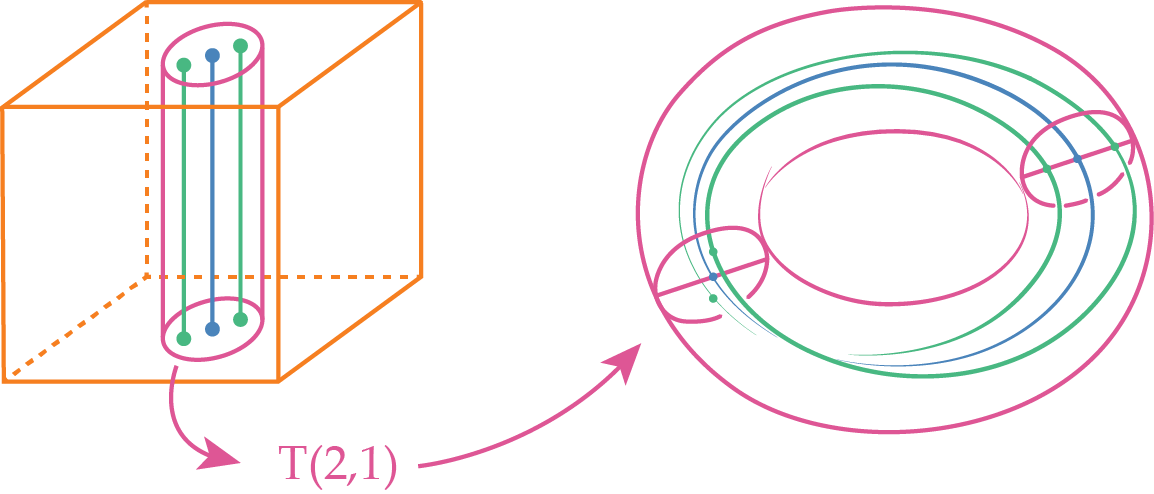}
\caption{Identification of a neighborhood of $\gamma_{(1/2,1/2)}$ in $\cC$
and $T(2,1)$.\label{figcubefiber} 
}
\end{figure}
\end{example}

A fundamental result by Epstein~\cite{epstein} 
is that in the context of 3-manifolds, a foliation by circles
is equivalent to being a Seifert fibered space.
\begin{theorem}[Epstein]
If $\Ma$ is a compact 3-manifold that admits a foliation by circles,
then $\Ma$ has the structure of a Seifert fibered space.
\end{theorem}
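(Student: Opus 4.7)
The plan is to show that every circle leaf $L$ of the foliation admits a saturated neighborhood that is fibered-isomorphic to some standard fibered solid torus $T(p,q)$, which is exactly the local model required by Definition~\ref{defSFSpace}. First I would fix a Riemannian metric on $\Ma$ and, after passing to the orientable double cover if the foliation is transversely non-orientable, choose a continuous unit vector field $X$ tangent to the foliation, with flow $\phi_t$. Define the period function $P\colon\Ma\to(0,\infty)$ by $P(x)=\min\{t>0 \mid \phi_t(x)=x\}$; this is the length of the leaf through $x$ and is easily seen to be lower semicontinuous. Fix a leaf $L$ and a point $x_0\in L$, and pick a small embedded transverse 2-disk $D\subset\Ma$ centered at $x_0$. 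The flow defines a first-return map $h\colon D_0\to D$ on a smaller subdisk $D_0\ni x_0$, given by $h(x)=\phi_{\tau(x)}(x)$ where $\tau(x)>0$ is the least positive return time to $D$; orientability forces $h$ to be an orientation-preserving homeomorphism fixing $x_0$.

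Granting the main technical claim below, that $h$ has finite order on a neighborhood of $x_0$, I would invoke the classical theorem of Kerékjártó--Eilenberg that any finite-order orientation-preserving homeomorphism of a 2-disk fixing the center is topologically conjugate to a rigid rotation $z\mapsto e^{2\pi i q/p}z$ with $\gcd(p,q)=1$ and $p$ equal to the order of $h$. Transporting the flow through this conjugation identifies the saturation of $D_0$ with $T(p,q)$: the leaf $L$ becomes the central fiber, and generic nearby leaves wrap $p$ times meridianally and $q$ times longitudinally, matching the definition of the standard fibered solid torus. Carrying this out uniformly for every leaf of the foliation produces the desired fibered chart everywhere and thus furnishes $\Ma$ with a Seifert structure.

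The main obstacle is establishing the finite-order claim, which is equivalent to showing that $P$ is locally bounded above. This is the technical heart of Epstein's paper. The strategy is to argue by contradiction: define the \emph{bad set} $B\subset\Ma$ to consist of the points at which $P$ fails to be locally bounded, and show $B=\emptyset$. The set $B$ is closed, and hence a Baire space in its own right. If $B\neq\emptyset$, pick a point $y\in B$; by definition there is a sequence of leaves $L_n$ accumulating on $y$ whose lengths $P(\cdot)$ tend to infinity. Compactness of $\Ma$ extracts a Hausdorff limit, and the hypothesis that every leaf is a circle (plus lower semicontinuity of $P$) forces this limit to be a circle leaf of finite length that is traversed many times by the $L_n$, producing a nontrivial rotational holonomy on a transverse section. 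A Baire-category argument applied inside $B$, combined with a careful analysis of this holonomy and its iterates on transverse disks meeting $B$, produces a relatively open subset of $B$ on which $P$ is actually locally bounded, contradicting the definition of $B$. Once $P$ is globally locally bounded, $\tau$ is bounded on $D_0$, and averaging the $\Z$-action generated by iterates of $h$ (together with the Brouwer plane translation theorem ruling out fixed-point-free behaviour near the center) yields the finite order, completing the proof.
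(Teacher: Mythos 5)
The paper does not prove this result; it simply cites Epstein's 1972 Annals paper. Your outline does in fact follow Epstein's actual proof strategy quite faithfully: (i) reduce to showing the period function $P$ is locally bounded; (ii) deduce that the holonomy return map $h$ on a transverse disk is of finite order; (iii) apply Ker\'ekj\'art\'o's theorem to conjugate $h$ to a rigid rotation, which realizes the saturated neighborhood as a standard fibered solid torus $T(p,q)$; (iv) for (i), introduce the closed ``bad set'' $B$ of points where $P$ is not locally bounded and derive a contradiction from $B\neq\emptyset$ via a Baire-category argument. This is precisely Epstein's architecture, and you correctly identify the local boundedness of $P$ as the technical heart of the matter.

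Two points in the last paragraph are, however, misplaced and would not survive a careful write-up. First, ``averaging the $\Z$-action generated by iterates of $h$'' is a smooth-category move; the return map $h$ is only a homeomorphism, and no averaging argument is available. Second, the Brouwer plane translation theorem is about fixed-point-free orientation-preserving plane homeomorphisms, which is irrelevant here since $h$ fixes $x_0$ by construction. What you actually need, once $P$ is locally bounded (so $h$ is pointwise periodic with uniformly bounded return number near $x_0$), is Montgomery's theorem: a pointwise periodic homeomorphism of a connected manifold is periodic. That, together with Ker\'ekj\'art\'o, is the correct bridge from ``$P$ locally bounded'' to ``$h$ topologically a rotation.'' Finally, note the small mismatch with the paper's own conventions: passing to the double cover when the foliation is not transversely orientable gives a Seifert structure upstairs, but Definition~\ref{defSFSpace} in this paper is stated only for orientable manifolds, so if $\Ma$ is non-orientable (or if leaves have orientation-reversing holonomy, producing fibered solid Klein bottle charts) you would need the more general notion of Seifert fibration than the one the paper sets up.
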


The topology of a Seifert fibered space can be understood
in terms of the way the fibers lie in the manifold. In 
this sense, we introduce the definition of a {\em Seifert fibration}
and of the {\em base space} of a Seifert fibered manifold.

\begin{definition}[Seifert fibration]\label{fibrs}
Let $\Ma$ be a 3-dimensional Seifert fibered space
and let $X$ be the topological space obtained
by collapsing each fiber of $\Ma$ to a point.
More precisely, $X$ is the quotient of $\Ma$ by
the equivalence relation $x\sim y$ if and only if $x$ and $y$
are in the same fiber of $\Ma$. 
We say that $X$ is the {\em base space} for $\Ma$.
Furthermore, if $\pi$ is the map 
$\pi\colon \Ma \to X$ that maps a point $x$ 
in a given fiber of 
$\Ma$ to the equivalence class $[x]\in X$,
we say that the triple
$(\Ma, X, \pi)$ is the {\em Seifert fibration} of $\Ma$.
\end{definition}

One of the most important properties of $X$ as above is that it
admits a structure of a good orbifold, where each singular fiber 
of $\Ma$ becomes a cone point in $X$ and $\pi\colon \Ma\to X$ is
a fibration in the orbifold sense.
With that in mind, we will say that $X$
is the {\em base orbifold} of $\Ma$.

\begin{proposition}\label{propbaseorbifold}
Let $\Ma$ be a 3-dimensional Seifert fibered space and let
$(\Ma,X,\pi)$ be the Seifert fibration of $\Ma$.
Then, $X$ admits the structure of a good orbifold.
\end{proposition}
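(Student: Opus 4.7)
The plan is to construct the orbifold atlas on $X$ directly from the local structure of $\Ma$ as a Seifert fibered space, and then to establish goodness by exhibiting a finite manifold cover of $X$. The only local computation required is to see what becomes of a standard fibered solid torus $T(p,q)$ after its circle fibers are collapsed to points.

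The key step is the following. Writing $T(p,q) = (\D^2\times[0,1])/\{(z,0)\sim(\psi_{p,q}(z),1)\}$ and iterating the identification, one sees that the fiber through a point $(z,t)$ meets each slice $\D^2\times\{t_0\}$ in exactly the orbit of $z$ under the cyclic group $\Z_p=\langle\psi_{p,q}\rangle$ acting on $\D^2$ by rotation. Consequently, the projection $\pi$ identifies $\pi(T(p,q))$ with the orbifold quotient $\D^2/\Z_p$: a smooth surface chart when $p=1$, and an elliptic cone-point chart of order $p$ when $p>1$, in the terminology of Proposition~\ref{locussin}. Because $\Ma$ is orientable, no reflector lines or corner reflectors can arise, so these elliptic charts are the only local models one meets. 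Transition maps between two overlapping fibered neighborhoods are by construction fiber-preserving diffeomorphisms of $\Ma$, and therefore descend to equivariant diffeomorphisms of the corresponding disk quotients; this furnishes the injective homomorphisms $f_{ij}$ and equivariant embeddings $\wt\varphi_{ij}$ required by Thurston's definition, with the cocycle relation holding up to the allowed ambiguity in $\G_k$.

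Once the orbifold structure is in place, the final step is to show that $X$ is \emph{good}. The plan is to produce a finite orbifold cover $\wt X\to X$ which is a smooth surface. A natural source is a finite regular cover $\wt\Ma\to\Ma$ in which every singular fiber of $\Ma$ is unwound into a regular one, so that $\wt\Ma\to\wt X$ is a genuine principal $\sn1$-bundle over the surface $\wt X$; such a cover can be organized by passing to a finite-index torsion-free subgroup of the orbifold fundamental group $\pi_1^{\rm orb}(X)$ (for instance via Selberg's lemma applied to a faithful linear representation, or, when $\Ma$ is compact, by a direct argument that simultaneously kills the isotropy at each of the finitely many singular fibers). Alternatively, one may appeal to the classification of bad $2$-orbifolds --- only the teardrop and the spindle --- and check that neither can arise as the base of a Seifert fibration.

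I expect this final step to be the main obstacle, because it demands a global argument about the topology of $\Ma$ (equivalently, about $\pi_1^{\rm orb}(X)$) rather than a local model computation. Steps 1 and 2 are essentially a matter of bookkeeping once the identification $T(p,q)/\text{fibers}=\D^2/\Z_p$ is in hand.
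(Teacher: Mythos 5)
Your local analysis matches the paper's sketch: collapsing the fibers of $T(p,q)$ yields $\D^2/\Z_p$, so each chart is a disk when $p=1$ and an elliptic cone-point chart of order $p$ when $p>1$, with fiber-preserving overlaps descending to the required equivariant transition maps. (The paper's sketch also records a fibered solid Klein bottle local model giving a reflector-line chart; this is compatible with orientability of $\Ma$, so be a little cautious with ``orientability rules out reflectors,'' even though under Definition~\ref{defSFSpace} only solid-torus models literally occur.)

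The gap is in the goodness step, and it is fatal to both of your suggested strategies. Consider the weighted circle action $\lambda\cdot(z_1,z_2)=(\lambda^p z_1,\lambda^q z_2)$ on $\sn3\subset\C^2$ with $\gcd(p,q)=1$ and $p\neq q$: this is a Seifert fibration of $\sn3$ in the sense of Definition~\ref{defSFSpace}, and its base orbifold is the spindle $\sn2(p,q)$, which is a teardrop $\sn2(q)$ when $p=1$. These are exactly the bad $2$-orbifolds, so the claim that no bad orbifold arises as the base of a Seifert fibration is false. Your first strategy fails on the same example: $\pi_1(\sn3)$ is trivial, so there is no finite cover $\wt\Ma\to\Ma$ unwinding the singular fiber; and the orbifold fundamental group of a teardrop is $\Z_q$ while that of a coprime spindle is trivial, so passing to a finite-index torsion-free subgroup yields nothing (there is in fact no manifold orbifold-cover of a bad orbifold, by definition). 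The underlying point is that the base orbifold of a Seifert fibration is \emph{not} always good --- it is good whenever $\pi_1(\Ma)$ is infinite, but the spherical cases genuinely violate the proposition as literally stated. Note finally that the paper's ``idea of the proof'' never addresses goodness at all and only exhibits the local orbifold charts, so you correctly identified the missing step, but neither of your proposed closures can close it.
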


A rigorous proof of Proposition~\ref{propbaseorbifold} can
be found in~\cite[Chapter~3]{scott}. We next present
a pictorial description of this good orbifold structure.

\begin{proof}[Idea of the proof of 
Proposition~\ref{propbaseorbifold}]
Let $\Ma$ be a Seifert fibered space with respective
Seifert fibration $(\Ma,X,\pi)$. Let $x$ be a point in $X$
with respective fiber $S = \pi^{-1}(\{x\})\subset \Ma$. Then, 
$S$ admits a neighborhood $V$, composed by fibers,
that descends to a a neighborhood $U$ of $x$ in $X$.
First, assume that $V$ is
isomorphic to a standard fibered solid torus
$T(p,q)$. Then, if $p = 1$, $U$ is diffeomorphic to the
disk $\bbD$ and if $p>1$, $U$ is isomorphic to a neighborhood
of the cone point in the orbifold $\R^2/\Z_p$. On the other hand, 
if $V$ is isomorphic to a standard fibered Klein bottle,
$X$ will be an orbifold with nonempty boundary and
$U$ is double-covered by $\bbD$ with a $\Z^2-$action, being
a neighborhood of a line of reflection.
\end{proof}

\begin{example}
Let $\cC$ be the Seifert fibered space given by Example~\ref{ex.r3}. Next,
we will show that the base orbifold $X$ of $\cC$ is the pillowcase orbifold
of Example~\ref{pillowcaseeg}. 
The fibers of $\cC$
are parameterized by $(x,y) \in [0,1]\times[0,1]$, so 
the quotient of $[0,1]\times[0,1]$ by the identifications
$(x,y)\sim (1-x,1-y)$, $(x,0)\sim (x,1)$ and $(0,y)\sim (1,y)$
give the structure of $X$. It is not difficult to see that
this structure is the same as $D = [0,1]\times[0,1/2]$, glued
along its boundaries by $(0,y)\sim (1,y)$, $(x,0)\sim(1-x,0)$ and
$(x,1/2)\sim(1-x,1/2)$ (see Figure~\ref{newfigure}), showing
that $X$ is topologically a sphere
with four cone points which is the pillowcase orbifold.
\end{example}

\begin{figure}
\centering
\includegraphics[width=0.5\textwidth]{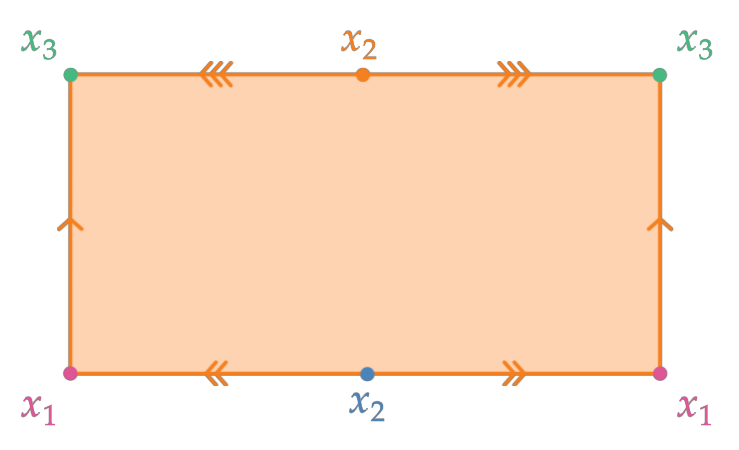}
\caption{The domain $D$ with the respective identifications
along its boundary. The points $x_1$ (representing $\gamma_{(0,0)}$),
$x_2$ (representing $\gamma_{(1/2,0)}$), $x_3$ (representing 
$\gamma_{(0,1/2)}$) and $x_4$ (representing $\g_{(1/2,1/2)}$)
are the four cone points of the orbifold $X$, each of which comes
from a singular fiber of $\cC$.\label{newfigure}}
\end{figure}

\subsubsection{Classification and the Euler number}

As seen previously, we may interpret a Seifert fibered space 
as a
fibration over an orbifold where the fibers are all diffeomorphic
to $\sn1$. Along this section, we will follow the construction of
A. Hatcher~\cite[Section~2.1]{Hatcher} to
present a classification (up to isomorphisms\footnote{We say
that two Seifert fibrations $(\Ma_1,X_1,\pi_1)$ and
$(\Ma_2,X_2,\pi_2)$ are isomorphic if there exists a 
diffeomorphism $\varphi\colon \Ma_1\to \Ma_2$ that
carries the fibers of $\pi_1$ to the fibers of $\pi_2$.})
of Seifert fibered spaces and introduce
a topological invariant called the Euler number. 
This invariant will be
useful both for distinction of Seifert fibered spaces and also
for identifying existence of horizontal surfaces (see 
Definition~\ref{defHor}) on closed Seifert fibered spaces.

We start this discussion by presenting a general
construction of a Seifert fibered space.

Let $S$ be a compact surface ($S$ is either orientable or 
non orientable, and has possibly nonempty boundary). 
For a given $n\in \mathds{N}$, let
$D_1,\,D_2,\,\ldots,\,D_n$ be a pairwise disjoint collection of
closed disks in the interior of $S$ and let $S'$ be the closure of 
$S\setminus (D_1\cup\ldots\cup D_n)$, so $S'$ is a 
compact surface with nonempty boundary.
Let $M'$ be the total space of the orientable circle bundle over
$S'$. Specifically, if $S'$ is orientable,
$M'=S'\times \sn1$. Otherwise, $M' = S'\wt{\times}\sn1$ 
can be defined as follows.
Let $S'$ be given by an identification of pairs $a_i,\,b_i$
of oriented arcs in the boundary of a 
topological closed disk $D$ 
(see the example in Figure~\ref{figKlein}). Then,
$M'$ can be obtained from $D\times \sn1$ by identifying the
surfaces $a_i\times\sn1$ and $b_i\times \sn1$ via the product of the
given identification $a_i\sim b_i$ with either the identity or 
a reflection in the $\sn1$ factor, whichever makes $M'$ orientable.
In particular, $M'$ is compact and each boundary component of $M'$ has
the topology of a 2-torus.

The construction presented above allows us to see the circle bundle
$\pi\colon M'\to S'$ as the double of an $[0,1]$-bundle,
thus, there exists a well defined global cross section
$\sigma\colon S'\to M'$, i.e.,
$\sigma$ is a continuous function and $\pi(\sigma(x)) = x$
for all $x\in S'$.
Next, we will make use of $\sigma$, together
with an orientation on $M'$, to obtain 
a well-defined notion of {\em slopes}\footnote{Recall
that the 2-torus $\bbT^2 = \sn1\times\sn1$ has 
universal covering map defined by 
$(x,y)\in \R^2\mapsto (e^{2\pi x i},e^{2\pi y i})$, and
that a line $\{y = \alpha x\}\subset \R^2$ descends to $\bbT^2$ as
a simple closed curve $c_\alpha \subset \bbT^2$ 
if and only if $\alpha \in \mathds{Q}$. We may
also extrapolate this definition to allow the curve 
$\{x = 0\}$ to be seen as the $\alpha = \infty$ case. Moreover,
using this model, a nontrivial
simple closed curve in $\bbT^2$ is always
isotopic to a unique curve $c_\alpha$ as defined above, for
some $\alpha \in \mathds{Q}\cup \{\infty\}$, and the number
$\alpha$ is defined as the {\em slope} of the curve.}
for nontrivial simple closed curves in the
boundary components of $M'$.
Fix $T$ a component of $\partial M'$ and let 
$m= \sigma(\sigma^{-1}(T))$. Since $\sigma^{-1}(T)$ is a boundary
component of $S'$, $m$ is
a nontrivial closed curve in $T$. Now, choose any $p\in m$ and
consider the fiber over $p$, $l = \pi^{-1}(\{p\})$. Once again,
$l$ is a nontrivial closed curve in $T$ and, since $\sigma$
is a cross section of $\pi$, $m\cap l = \{p\}$. Moreover,
the curves $m$ and $l$ (which are also known as {\em natural
curves} for $T$, see Figure~\ref{fignat}) generate $\pi_1(T,p)$. Then, there 
exists a diffeomorphism $\varphi\colon T \to \sn1\times\sn1$ such that
$\varphi$ maps $m$ to $\sn1\times\{0\}$ (a slope 0 curve)
and $l$ to $\{0\} \times \sn1$ (a slope $\infty$ curve). For
simplicity, when we talk about the slope
$\frac{r}{s}$ of a given curve, we are assuming that 
$r$ and $s$ are co-primes.

\begin{figure}
\centering
\includegraphics[width=0.6\textwidth]{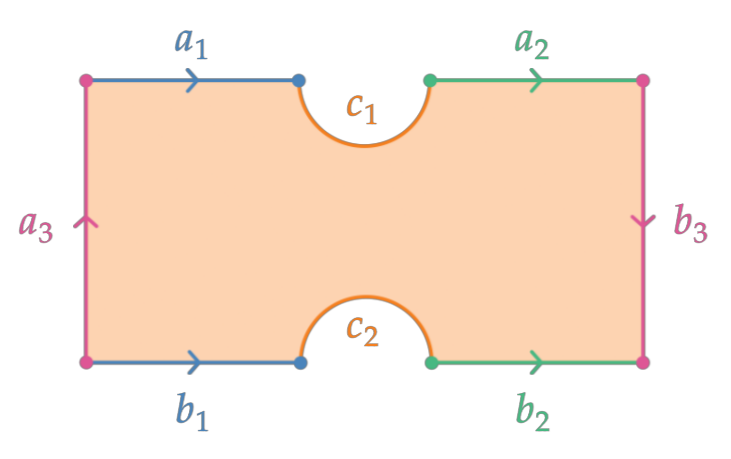}
\caption{\label{figKlein}
Let $S'$ be the Klein bottle with
one disk removed. Then, $S'$ can be constructed from a topological 
disk $D$ (highlighted in the above figure) by identifying the
oriented arcs $a_1\sim b_1$, $a_2\sim b_2$ and $a_3\sim b_3$. Note 
that $c_1$ and $c_2$ are not identified other than by
its shared endpoints with the arcs $a_1,b_1,a_2$ and $b_2$.}
\end{figure}

The next step in our construction of a Seifert fibered 3-manifold
is to {\em fill} the boundary components of $M'$ that 
are generated from $\partial D_1,\,\ldots,\,\partial D_n$ by
attaching solid tori to them.
By an abuse of notation, in the 
remainder of this construction when we choose {\em a boundary 
component} of $M'$ we will assume, without further comments, 
that it is one of the $n$ components 
we just described. Note that
for each such component, there are infinitely many
ways of doing such a gluing, which is called a
{\em Dehn filling}. Let $T$ be a boundary component of $M'$
with natural curves $m$ and $l$ as defined in the previous discussion.
Then, after choosing orientations, 
$(m,l)$ provides a positively oriented basis for the 
first homology group $H_{1}(T,\Z)$ and the curve with slope 
$\frac{r}{s}$, defined as
$\gamma = r l+s m$ 
is prime, in the sense that it is not
a multiple $k \gamma'$ (in homology) of another curve $\gamma'$ unless 
$\vert{k}\vert = 1$ and $\gamma'=\pm \gamma$.

\begin{definition}
The Dehn filling of $T$
generated by $\frac{r}{s}\in\Q$
is the unique (up to homeomorphism)
manifold generated by gluing a solid torus 
$\D\times\sn1$ to $T$ by its boundary in such a way that the 
boundary of the {\em meridianal disk} $\D\times\{0\}$ 
is glued (by a diffeomorphism) to a curve
of slope $\frac{r}{s}$ in $T$. In other 
words, the Dehn filling of $T$ determined 
by $\frac{r}{s}$ glues a solid torus to $T$, making the 
curve $rl+sm$ (and therefore
any of its multiples) trivial in homology.
\end{definition}

\begin{figure} 	
\centering
	\includegraphics[scale=0.22]{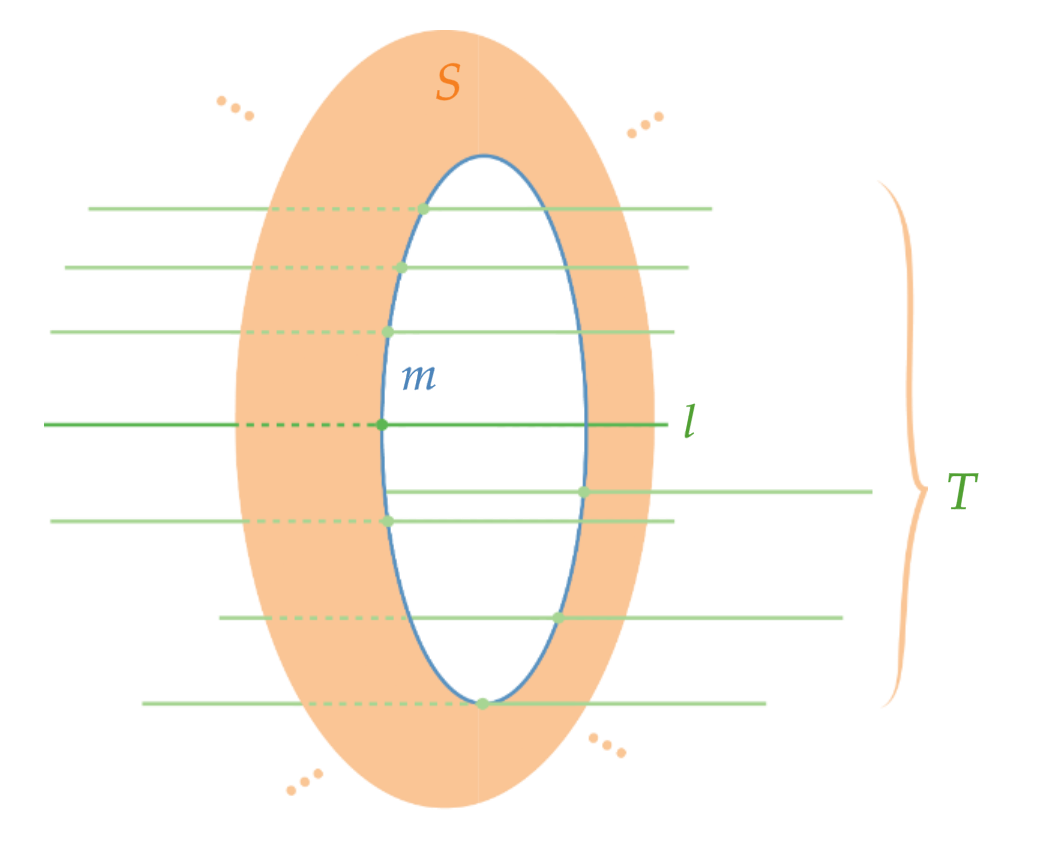}
\caption{\label{fignat}
The natural curves $m$ and $l$ on a torus boundary component $T$ of a 
Seifert fibered space.}
\end{figure}

Let $M^1$ be the manifold obtained by performing
a Dehn filling generated by a slope $\frac{r_1}{s_1}$ 
in a torus boundary of $M'$. Then,
the circle bundle over $M'$ extends naturally to 
a circle bundle over $M^1$, since the fibers (slope $\infty$) 
are not isotopic to meridian circles in the attached $\D\times\sn1$.
Intuitively, the Dehn-filling as above glues a neighborhood
with the structure of a $T(s_1,r_1)$ fibered solid torus to
the original Seifert fibration of $M'$. In particular,
the base space of the new fibration has the
structure of an orbifold (possibly with boundary)
which has one cone point of multiplicity $s_1$ (if $s_1 = 1$, 
the fiber is regular).

The above observation that the original circle
bundle $M'\to S'$ extended after performing one Dehn filling in 
one boundary component of $M'$ allows us to repeat the process,
generating the following resulting manifold.

\begin{definition}\label{defSM}
Let $S$ be a compact surface and let 
$\frac{r_1}{s_1},\,\frac{r_2}{s_2},\,\ldots,\,\frac{r_n}{s_n}\in \Q$.
Then, the 3-manifold
$$M(S,\,\frac{r_1}{s_1},\,\frac{r_2}{s_2},\,\ldots,\,\frac{r_n}{s_n})$$
is the resulting Seifert-fibered space after performing 
$n$ Dehn fillings with slopes $\frac{r_i}{s_i}$ on the boundary 
components of $M'$ as previously described.
\end{definition}

Note that, by construction,
$M(S,\,\frac{r_1}{s_1},\,\frac{r_2}{s_2},\,\ldots,\,\frac{r_n}{s_n})$ 
has a Seifert fibration over the orbifold 
$(S, x_{1}, ..., x_{n})$, 
where each $x_i$ is a cone point in $S$ with multiplicity $s_i$.
There are several questions regarding this construction, and 
we expect the next proposition to answer many, if not all of them.
A proof of it can be found in Hatcher~\cite[Proposition~2.1]{Hatcher}.

\begin{proposition}\label{propClass}
Using the notation introduced by Definition~\ref{defSM}, the following hold:
\begin{enumerate}
\item Every compact, orientable Seifert 
fibered 3-manifold is isomorphic
to one of the models
$M(S,\,\frac{r_1}{s_1},\,\frac{r_2}{s_2},\,\ldots,\,\frac{r_n}{s_n})$.
\item 
$M(S,\,\frac{r_1}{s_1},\,\frac{r_2}{s_2},\,\ldots,\,\frac{r_n}{s_n})$
is isomorphic to
$M(S,\,\frac{r_1}{s_1},\,\frac{r_2}{s_2},\,\ldots,\,\frac{r_n}{s_n},0)$.
\item 
$M(S,\,\frac{r_1}{s_1},\,\frac{r_2}{s_2},\,\ldots,\,\frac{r_n}{s_n})$
and 
$M(S,\,-\frac{r_1}{s_1},\,-\frac{r_2}{s_2},\,\ldots,\,-\frac{r_n}{s_n})$
are related by a change of orientation.
\item \label{it4}
$M(S,\,\frac{r_1}{s_1},\,\frac{r_2}{s_2},\,\ldots,\,\frac{r_n}{s_n})$
is isomorphic to
$M(S,\,\frac{r_1'}{s_1'},\,\frac{r_2'}{s_2'},\,\ldots,
\,\frac{r_n'}{s_n'})$ by an orientation preserving diffeomorphism
if and only if the following two conditions hold:
\begin{enumerate}
\item After a permutation of indices,
it holds, for
all $i\in \{1,2,\ldots,n\}$,
$\frac{r_i}{s_i}\equiv \frac{r_i'}{s_i'} {\rm mod}\ 1$.
\item \label{it4b} If $\partial S = \emptyset$, 
$\displaystyle\sum_{i=1}^n \frac{r_i}{s_i} =
 \displaystyle\sum_{i=1}^n \frac{r_i'}{s_i'}$.
\end{enumerate}
\end{enumerate}
\end{proposition}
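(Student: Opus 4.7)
The plan is to prove the four items in order, with the real work concentrated in part~(4). For \emph{existence} (part~1), I would start with a compact orientable Seifert fibered space $\Ma$; by compactness it has finitely many singular fibers $\gamma_1,\dots,\gamma_n$. Choose pairwise disjoint fibered neighborhoods $V_i\cong T(s_i,r_i)$ of the $\gamma_i$ and let $M'$ be the complement of their interiors. Every fiber of $M'$ is regular, so by Epstein's theorem $M'\to S'$ is a circle bundle over a compact surface $S'$ whose boundary consists of the $\partial V_i$ (together with any original boundary tori of $\Ma$). Because $S'$ has nonempty boundary, the Euler-class obstruction in $H^{2}(S';\Z)$ vanishes and $M'\to S'$ admits a cross section $\sigma$; this $\sigma$ determines natural curves $m_i$ on each $\partial V_i$, and reattaching the $V_i$ produces Dehn fillings along slopes $r_i/s_i$ relative to $(m_i,l_i)$. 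Capping each $\partial V_i$-component of $S'$ with a disk yields the surface $S$, and $\Ma\cong M(S,r_1/s_1,\ldots,r_n/s_n)$.

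Parts~(2) and~(3) are direct. A slope-$0$ Dehn filling glues a solid torus whose meridian matches the section curve $m$, leaving the fiber $l$ essential, so the central fiber is regular and the resulting fibration is identical to the one obtained by never performing the filling in the first place. Reversing the orientation of $\Ma$, say by reversing every fiber, sends each curve $rl+sm$ to $-rl+sm$, which is precisely the slope-$(-r/s)$ curve in the reversed-orientation manifold.

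The main obstacle is part~(4). After allowing permutations of indices, the remaining ambiguity is the choice of cross section $\sigma$. Any two sections of the bundle $M'\to S'$ differ by a map $S'\to \Sph^{1}$, classified by $H^{1}(S';\Z)$; restricting such a map to $\partial V_i$ gives an integer $k_i$ (its degree), and the effect on the natural curve is $m_i\mapsto m_i+k_il_i$, shifting the slope describing the $i$-th filling from $r_i/s_i$ to $r_i/s_i+k_i$. Thus $r_i/s_i \bmod 1$ is independent of the section, giving the necessity in~(a); conversely, two data sets that agree mod~$1$ at every index can be interconverted by an appropriate section change, producing the orientation-preserving isomorphism.

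It remains to identify which tuples $(k_1,\ldots,k_n)\in\Z^{n}$ actually arise as restrictions under $H^{1}(S';\Z)\to H^{1}(\partial S';\Z)\cong\Z^{n}$. When $\partial S\neq\emptyset$, the extra boundary of $S'$ (not coming from the $V_i$) absorbs any imbalance, making the restriction surjective, so no further invariant appears. When $\partial S=\emptyset$, a Mayer--Vietoris argument applied to the pair $(S,S')$ identifies the image with the hyperplane $\{\sum k_i=0\}$, essentially because the connecting map $H^{1}(\partial S')\to H^{2}(S)\cong\Z$ is summation. Hence $\sum r_i/s_i$ is an invariant of the oriented Seifert fibration over a closed base, giving the necessity of~(b); sufficiency is the converse statement, namely that any tuple $(k_i)$ with $\sum k_i=0$ is the restriction of some class in $H^{1}(S';\Z)$ and therefore can be realized by a legal change of section.
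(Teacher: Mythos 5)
The paper itself does not supply a proof of Proposition~\ref{propClass}; it defers to Hatcher~\cite[Prop.~2.1]{Hatcher}. Your argument reproduces Hatcher's proof essentially step for step: remove fibered solid-torus neighborhoods of the singular fibers to leave a circle bundle $M'\to S'$, use the nonempty boundary of $S'$ to get a cross section and hence natural curves, and then analyze the ambiguity in the section via $H^1(S';\Z)$ and its restriction to the boundary, with the Mayer--Vietoris/long-exact-sequence argument identifying the constraint $\sum k_i=0$ over a closed base. That is the right approach and the details you give are correct. Two small remarks: first, invoking Epstein's theorem in part~(1) is unnecessary --- once $\Ma$ is assumed to be a Seifert fibered space, removing neighborhoods of the singular fibers yields a space whose fibers are all regular, which is a circle bundle by the definition of a Seifert fibration (Epstein's theorem is the converse statement, from circle foliations to Seifert fibrations). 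Second, when $S'$ is nonorientable the statement ``two sections differ by a map $S'\to\Sph^1$'' needs a word of care since $M'=S'\widetilde\times\Sph^1$ is the twisted bundle; the correct formulation, as in Hatcher, is to view $M'$ as the double of the (trivial, since $\partial S'\neq\emptyset$) $[0,1]$-bundle over $S'$, so the section-change still twists an integral number of times around each boundary circle and the rest of your argument goes through unchanged.
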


Proposition~\ref{propClass},
together with the construction of Definition~\ref{defSM},
gives a complete classification of Seifert fibrations, up to
isomorphisms. We note that, in order to obtain a classification
of Seifert fibered spaces up to diffeomorphisms, more work
still needs to be done, since there are diffeomorphic Seifert fibered
spaces which are not isomorphic as Seifert fibrations (in other
words, there are manifolds with more than one Seifert 
fibration structure). 
For such a classification, we
suggest~\cite[Chapter~10]{martelli} or~\cite[Theorem~2.3]{Hatcher}.

We are now ready to define an invariant of
a Seifert fibration, which is the {\em Euler number}. Note that
this invariant is well defined by item~\ref{it4b} of 
Proposition~\ref{propClass}.

\begin{definition}[Euler number]
Let $\eta$ be a Seifert fibration given by
$
M(S,\,\frac{r_1}{s_1},\,\frac{r_2}{s_2},\,\ldots,\,\frac{r_n}{s_n})$
with a closed, orientable total Seifert fibered space $\Ma$.
Then, the {\em Euler number} of $\eta$ is
$$e(\eta) = \sum_{i=1}^n\frac{r_i}{s_i}.$$
\end{definition}

Intuitively, other than being helpful for distinguishing
Seifert fibrations, the Euler number measures how far we are
from obtaining a section for the respective fiber bundle.
The next definition 
(and the following Proposition~\ref{prophorizonta})
make this intuition precise.

\begin{definition}\label{defHor}
Let $\Ma$ be a compact Seifert fibered space (together with
a Seifert fibration) and let
$\Sigma \subset \Ma $ be a closed surface embedded in $\Ma$. 
We say that $\S$ is {\em vertical} if it is a union of 
regular fibers (in this case, $\S$ is either a torus or a Klein
bottle whose projection over the base orbifold $X$ is a simple closed 
curve in the complement of the cone points of $X$).
On the other hand, we say that $\S$ is {\em horizontal}
if $\S$ is everywhere transverse to the fibers.
\end{definition}

A proof to Proposition~\ref{prophorizonta} can be
found in~\cite[Proposition~2.2]{Hatcher}.

\begin{proposition}\label{prophorizonta}
Let $\Ma$ be a compact, 
orientable Seifert fibered space with respective
Seifert fibration $\eta$. Then:
\begin{enumerate}
\item If $\partial \Ma \neq \emptyset$, then there exists
a horizontal surface in $\Ma$.
\item If $\partial \Ma = \emptyset$, then
there exists a horizontal surface in $\Ma$ if and only if
$e(\eta) = 0$.
\end{enumerate}
\end{proposition}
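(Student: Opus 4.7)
The plan is to reformulate horizontality as the existence of a section of an auxiliary bundle, thereby reducing both parts of the claim to the classification of circle bundles over surfaces by their Euler class. First I would establish that every horizontal surface $\Sigma\subset\Ma$ makes the restriction $\pi|_\Sigma\colon \Sigma\to X$ into an orbifold covering of the base orbifold $X$ by the manifold $\Sigma$: transversality to the fibers renders $\pi|_\Sigma$ a local diffeomorphism at every regular point of $X$, while near a singular fiber of multiplicity $s_i$ the surface meets each nearby regular fiber exactly $s_i$ times, matching the local behavior $\bbD\to\bbD/\Z_{s_i}$ of the orbifold cover at a branch point. Under this identification the search for a horizontal surface becomes the search for a manifold orbifold cover $f\colon Y\to X$ whose pullback Seifert fibration $f^*\eta$ admits a section.

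For the bounded case I would use that $\partial\Ma\neq\emptyset$ forces $\partial S\neq\emptyset$, since by inspecting Definition~\ref{defSM} one sees that boundary in $\Ma$ is produced only from $\partial S$ (every Dehn filling closes off its torus boundary). Consequently every manifold orbifold cover $f\colon Y\to X$ has $Y$ with nonempty boundary, and every $S^1$-bundle over a compact surface-with-boundary is trivial, its Euler class lying in the vanishing group $H^2(Y;\Z)$. Hence $f^*\eta$ admits a section, whose image supplies the desired horizontal surface in $\Ma$; existence of the manifold cover $Y$ is guaranteed because $X$ is a good orbifold by Proposition~\ref{propbaseorbifold}.

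For the closed case the argument rests on a multiplicativity formula $e(f^*\eta)=d\cdot e(\eta)$ for every degree-$d$ manifold orbifold cover $f\colon Y\to X$. I would establish this by a local computation: a cone point of multiplicity $s_i$ has exactly $d/s_i$ preimages in $Y$, and at each preimage the ramified $s_i$-fold cover converts the local $T(s_i,r_i)$-model into a $T(1,r_i)$-model, contributing $r_i$ to the Euler number of the pullback bundle; summing over all preimages gives $d\sum r_i/s_i=d\cdot e(\eta)$. Since $Y$ is a closed surface, $f^*\eta$ admits a section if and only if its integer Euler number vanishes, which, since $d>0$, is equivalent to $e(\eta)=0$. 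Conversely, a horizontal surface $\Sigma\subset\Ma$ yields both a manifold cover $f=\pi|_\Sigma$ and a section of $f^*\eta$ (the graph of the inclusion $\Sigma\hookrightarrow\Ma$), so $d\cdot e(\eta)=0$ and thus $e(\eta)=0$.

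The most delicate step I foresee is verifying the multiplicativity $e(f^*\eta)=d\cdot e(\eta)$, as it requires a careful local analysis of how the Dehn-filling invariants $(r_i,s_i)$ transform under ramified orbifold covers and a precise identification of $f^*\eta$ with a specific model of the form $M(Y;\cdot)$ from Definition~\ref{defSM}. Once that identity is in place, the remainder of the argument reduces to standard facts: the classification of $S^1$-bundles over a closed oriented surface by an integer Euler number, and the triviality of every $S^1$-bundle over a compact surface with nonempty boundary.
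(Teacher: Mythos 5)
The paper gives no proof of this proposition; it just cites Hatcher~\cite[Proposition~2.2]{Hatcher}, whose argument is direct and constructive (it builds a horizontal surface explicitly as a multisection of the trivial circle bundle over the punctured base surface, choosing boundary slopes so that the sheets extend through the fibered solid tori, and reads off the condition $\sum r_i/s_i = 0$ from the resulting boundary-slope bookkeeping). Your route through pullback Seifert fibrations and the Euler-class formalism is therefore genuinely different, and your handling of the ``only if'' part of~(2) is correct: a horizontal surface $\Sigma$ does make $\pi|_\Sigma$ a finite manifold orbifold cover of $X$, the inclusion does provide a section of the pullback, and the multiplicativity $e(f^*\eta)=d\,e(\eta)$ that you flag as delicate is a standard fact (essentially Scott~\cite{scott}, Lemma~3.2), with $d$ divisible by every $s_i$ so that $d\,e(\eta)\in\Z$.

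The genuine gap is in both ``if'' directions, and it stems from stating the reduction as an equivalence (``the search for a horizontal surface \emph{becomes} the search for a manifold cover $f\colon Y\to X$ with $f^*\eta$ admitting a section'') when you only justify one implication. If $f\colon Y\to X$ has degree $d>1$, the natural map from the pullback $f^*\Ma$ to $\Ma$ is a $d$-fold covering, so the image in $\Ma$ of a section $\sigma\colon Y\to f^*\Ma$ is a priori only an \emph{immersed} surface transverse to the fibers: distinct points $y_1,y_2\in Y$ with $f(y_1)=f(y_2)$ can perfectly well have $\sigma(y_1)$ and $\sigma(y_2)$ land on the same point of $\Ma$. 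But a horizontal surface in the sense of Definition~\ref{defHor} is required to be embedded. Passing from the immersed picture to an embedded horizontal surface is exactly the substantive content here, and it is not addressed: one must either arrange the section equivariantly so that it is disjoint from all of its deck translates, or resolve the double curves by cut-and-paste while checking transversality to the fibers survives, or simply build the embedded multisection directly as Hatcher does. Without one of those steps, ``whose image supplies the desired horizontal surface'' in your bounded case (and the corresponding step for $e(\eta)=0$ in the closed case) does not follow. A smaller point: you also use, beyond Proposition~\ref{propbaseorbifold}, that a compact good 2-orbifold admits a \emph{finite} (hence compact) manifold cover, which is true but worth stating.
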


\subsection{Decomposition of 3-manifolds}

Having defined the concept of a Seifert fibered space, we are able
to present some of the main developments of the theory of 3-manifolds
before geometrization. The natural path for understanding any 
mathematical object is to try to break it up into simpler pieces,
and that was firstly obtained using the concept of 
connected sum (see Definition~\ref{consum}). Note that 
the 3-sphere $\sn3$ acts as the {\em neutral element} for 
the connected sum of 3-manifolds, since for any
3-manifold $M$, the connected sum $M\# \sn3$ is
diffeomorphic to $M$.

\begin{definition}
A 3-manifold $\Ma$ is called {\em prime} if any connected sum
$\Ma = \Ma_{1} \# \Ma_{2}$ is trivial in the sense that either
$ \Ma_{1}$ or $\Ma_{2}$ is the 3-sphere $\sn3$.
\end{definition}

Note that if a 3-manifold $\Ma$ is not prime, then there exists
a decomposition of $\Ma$ in a nontrivial
connected sum $\Ma = \Ma_1\# \Ma_2$. In particular, there is an 
embedded topological 2-sphere $S\subset \Ma$ that separates 
$\Ma$ into two regions, one diffeomorphic to $\Ma_1\setminus B^3$
and another diffeomorphic to $\Ma_2\setminus B^3$, where $B^3$
represents the 3-ball. Thus, we may introduce the 
closely related notion of an irreducible manifold as follows:

\begin{definition}\label{defirred}
We say that a 3-manifold $\Ma$ is {\em irreducible} if
any embedded 2-sphere in $\Ma$ is the boundary of a 3-ball in $\Ma$.
\end{definition}

It is straightforward to see that if $\Ma$ is irreducible,
then $\Ma$ is prime. The converse does not hold, since
for a given $p\in \sn1$, the sphere $\sn2\times \{p\}$ does not
bound any 3-ball in the prime 3-manifold
$\sn2\times \sn1$. But in fact,
the only closed, orientable prime 3-manifold that is
not irreducible is $\sn2\times\sn1$.

The next result, due to Kneser~\cite{kneser} and 
Milnor~\cite{milnor}, establishes that any closed orientable
3-manifold admits a {\em unique} decomposition
by prime factors.

\begin{theorem}[Kneser-Milnor]\label{thmKnMi}
Let $\Ma$ be a closed, oriented 3-manifold. Then, there are
closed, oriented, prime 3-manifolds $\Ma_1,\,\Ma_2,\,\ldots,\,\Ma_k$
such that $\Ma$ is homeomorphic to the connected sum
$\Ma_1\#\Ma_2\#\ldots\#\Ma_k$.
Furthermore, the nontrivial factors in this decomposition are
unique up to reordering and orientation-preserving diffeomorphisms.
\end{theorem}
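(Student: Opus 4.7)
The plan is to establish existence and uniqueness separately. For existence, I would argue inductively: if $\Ma$ is already prime, there is nothing to do; otherwise there is, by definition, a nontrivial splitting $\Ma = \Ma_1\#\Ma_2$, which is equivalent to the existence of an embedded 2-sphere $S\subset\Ma$ that does not bound a 3-ball. Cutting $\Ma$ along $S$ and capping each of the two resulting $\sn2$-boundaries with a 3-ball produces $\Ma_1$ and $\Ma_2$, each of which is a closed oriented 3-manifold strictly simpler (in a sense to be made precise below) than $\Ma$. Iterating on each factor and using Alexander's theorem to recognize when a summand is $\sn3$ eventually produces a decomposition into prime pieces, provided the process terminates. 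The crucial issue is precisely this termination: a priori, repeated splitting could produce an infinite sequence of prime factors.

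To control this, I would invoke Kneser's finiteness argument. Fix a smooth triangulation $T$ of $\Ma$. Given any pairwise disjoint family $\mathcal{F} = \{S_1, \ldots, S_N\}$ of 2-spheres none of which bounds a ball and no two of which cobound an $\sn2\times[0,1]$, I would first isotope each $S_i$ into \emph{normal form} with respect to $T$, so that $S_i$ meets each 3-simplex in a disjoint union of normal disks (triangles and quadrilaterals). A combinatorial counting argument, together with an analysis showing that two spheres meeting a tetrahedron in the same normal-disk pattern must cobound a product region, yields an explicit upper bound on $N$ depending only on the number of tetrahedra of $T$. Since every nontrivial connected-sum decomposition of $\Ma$ into $k$ summands provides $k-1$ disjoint, pairwise non-parallel essential spheres, this bounds $k$ and shows the decomposition terminates.

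For uniqueness, I would compare two prime decompositions, corresponding to two systems $\mathcal{S}$ and $\mathcal{S}'$ of disjoint essential 2-spheres in $\Ma$. After a small isotopy, $\mathcal{S}\cap\mathcal{S}'$ consists of transverse circles, and I would run the standard innermost-disk exchange: an innermost circle on a sphere of $\mathcal{S}'$ bounds a disk $D$ whose interior is disjoint from $\mathcal{S}$ and which meets some $S\in\mathcal{S}$ only in $\partial D$; using $D$ to surger $S$ produces two spheres $S_1,\,S_2$, and irreducibility of the prime factors of $\Ma_1\#\cdots\#\Ma_k$ (away from the $\sn2\times\sn1$ summands) forces at least one of them to bound a ball, strictly decreasing $|\mathcal{S}\cap\mathcal{S}'|$. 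Once $\mathcal{S}$ and $\mathcal{S}'$ have been made disjoint, a direct matching of the complementary components identifies the two sets of prime factors, up to reordering and orientation-preserving diffeomorphism.

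The hard part will be twofold. First, Kneser's finiteness argument requires nontrivial combinatorics: making precise the claim that two normal spheres with the same disk pattern in every tetrahedron must be parallel, and tracking how normal-form isotopies interact with the non-ball-bounding condition. Second, the $\sn2\times\sn1$ summands break the innermost-disk strategy in its simplest form, because in $\sn2\times\sn1$ there are non-separating essential spheres that never bound balls; handling these requires separating out the $\sn2\times\sn1$ factors first (using that they are the only closed orientable prime 3-manifolds which are not irreducible, as noted in the text after Definition~\ref{defirred}) and then running the exchange argument inside the irreducible complements.
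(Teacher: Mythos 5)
The paper does not prove Theorem~\ref{thmKnMi}; it records the statement and cites Kneser~\cite{kneser} and Milnor~\cite{milnor} directly, so there is no in-paper proof to compare against. Your sketch is a sound high-level outline of the classical argument (Kneser's normal-surface finiteness for existence; a Milnor-style innermost-disk exchange for uniqueness, with the $\sn2\times\sn1$ summands isolated first because they carry nonseparating spheres), and it correctly singles out the two genuine difficulties. Two small flags for a full write-up: the Kneser bound rests on two normal spheres having the same normal-disk pattern across \emph{all} tetrahedra simultaneously, after which one shows the region between them is an $I$-bundle --- your first phrasing (per tetrahedron) is looser than the statement the argument actually uses, though you state it correctly the second time around; and the innermost-disk reduction requires verifying that after surgering $S$ along $D$ and discarding inessential pieces, the modified family $\mathcal{S}$ still realizes the same prime decomposition, so that the induction on $|\mathcal{S}\cap\mathcal{S}'|$ closes. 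Neither is a gap in the strategy, only places where the details require care.
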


Although Theorem~\ref{thmKnMi} cuts a closed, oriented 3-manifold
along spheres, providing a standard decomposition
over prime factors
(and thus along irreducible and $\sn2\times\sn1$ factors),
it is not sufficient to understand the topology
of 3-manifolds, since even restricting to
irreducible, closed, oriented 3-manifold, this classification is
not an easy task.
Another step towards this goal was to obtain another
standard decomposition, by the (in some sense) second
simplest topology of a surface, which is the decomposition
of any irreducible, closed and orientable 3-manifold along tori.
This decomposition (which will be presented in Theorem~\ref{jsj}
below) is called the JSJ decomposition, an acronym to the names
of the researchers that proved its existence: Jaco-Shalen~\cite{JS}
and Johannson~\cite{otherJ}. 
Before presenting the statement of this decomposition,
we need a few extra definitions.

\begin{definition}\label{incompr}
Let $\Ma$ be a compact 3-manifold and let $S$ be a surface 
properly embedded\footnote{In this setting, we say that
$S$ is proper if $S$ is 
compact and $\partial S= S\cap \partial M$.}
in $\Ma$. A {\em compression disk} $D$ for $S$ is an embedded disk
in $\Ma$ which intersects $S$ transversely,
and such that $\partial D = D\cap S$
does not bound a disk in $S$. Furthermore,
if $S$ admits a compression disk, we say that $S$ is
{\em compressible}, and if $S$ is not compressible and not a
2-sphere, we say that $S$ is {\em incompressible}.
\end{definition}

In some sense, the next definition 
gives the equivalent definition of a prime 3-manifold in the context
of a torus decomposition.

\begin{definition}[Atoroidal manifold]
Let $\Ma$ be a compact 3-manifold with empty or
toroidal boundary. 
We say that $\Ma$ is
{\em atoroidal} (or homotopically atoroidal) 
if any (immersed) incompressible torus is homotopic to 
a component of $\partial \Ma$.
\end{definition}

Closely related to the notion of atoroidal manifolds is the following
definition.

\begin{definition}[Geometrically atoroidal manifold]\label{defgam}
Let $\Ma$ be a compact 3-manifold with (possibly empty)
toroidal boundary. 
We say that $\Ma$ is {\em geometrically atoroidal}
if any embedded, incompressible torus is isotopic to 
a component of $\partial \Ma$.
\end{definition}

\begin{remark}\label{smallseifert}
Any
atoroidal manifold is 
geometrically atoroidal. However, the converse does not hold,
as it is easy to see by the example of Figure~\ref{figgeomat}.
Nonetheless, it is true that if $\Ma$ is a compact 3-manifold
with (possibly empty) toroidal boundary and $\Ma$ is
geometrically atoroidal,
then $\Ma$ is atoroidal unless it is 
a {\em small Seifert fibered space}, in the sense that
it is a Seifert fibered space and the base orbifold
has genus zero and the number of cone points, together with the 
number of boundary components, is at most three.
\end{remark}

\begin{figure}
\centering
\includegraphics[width=0.5\textwidth]{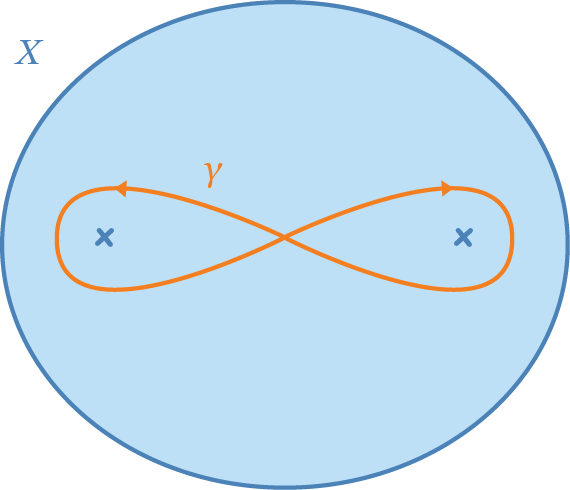}
\caption{\label{figgeomat} Let $X$ be a {\em pair of pants}, i.e. 
$X = \bbD\setminus\{p_1,\,p_2\}$ is the open disk, punctured twice.
Then, the manifold $\Ma = X\times\sn1$ is geometrically atoroidal
(since any torus in $\bbD\times\sn1$ separates) but not
homotopically atoroidal, since the torus $\gamma\times\sn1$, where
$\gamma$ is the curve depicted above, is incompressible
and not boundary-parallel. However,
$\Ma$ is a small Seifert fibered space.}
\end{figure}

Having defined the concepts of a Seifert fibered space
and of an atoroidal manifold, we may now present
the JSJ decomposition theorem.

\begin{theorem}[JSJ decomposition]\label{jsj}
Let $\Ma$ be an irreducible, compact and orientable 3-manifold with
empty or toroidal boundary. Then, $\Ma$ admits a (possibly empty) 
pairwise disjoint
collection $T_1,\,T_2,\,\ldots,\,T_n$ of embedded, incompressible tori 
that separate $\Ma$ into components, each of which is either
atoroidal or Seifert fibered. Furthermore, any collection of such tori
with the minimal number of elements is unique, up to isotopy.
\end{theorem}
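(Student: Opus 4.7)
The plan is to establish existence by an inductive cutting procedure and uniqueness by a general-position argument. The key external input will be the Kneser-Haken finiteness theorem, which asserts that in a compact irreducible 3-manifold there is a uniform bound on the cardinality of any system of pairwise disjoint, pairwise non-parallel, incompressible surfaces of bounded genus (proved via normal surface theory with respect to a fixed triangulation).

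For existence, I would build $\{T_1,\ldots,T_n\}$ iteratively. If $\Ma$ itself is atoroidal or Seifert fibered, take $n=0$. Otherwise, choose an embedded incompressible torus $T_1 \subset \Ma$ not isotopic to a boundary component, cut $\Ma$ along $T_1$, and iterate on each resulting piece, which remains irreducible (by irreducibility of $\Ma$ and incompressibility of $T_1$) and has toroidal boundary. By Kneser-Haken finiteness, this process must terminate, producing a (possibly redundant) collection whose complementary pieces are all geometrically atoroidal. Applying Remark~\ref{smallseifert}, each such piece is atoroidal or a small Seifert fibered space. I would then pass to a subcollection of minimal cardinality preserving the decomposition property; in practice this amounts to discarding tori that separate two Seifert fibered pieces whose fibrations can be coherently extended across the torus, producing a single larger Seifert fibered piece.

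Uniqueness of the minimal collection up to isotopy is where the main difficulty lies. Given two such collections $\mathcal{T}$ and $\mathcal{T}'$, the strategy is to put them in transverse general position and minimize $|\mathcal{T}\cap \mathcal{T}'|$ (a union of circles) over all isotopies. Any intersection circle $\alpha$ that is trivial in one torus bounds an innermost disk $D$ in $\Ma$; by incompressibility of the other torus and irreducibility of $\Ma$, $\alpha$ also bounds a disk in that torus, and together these form a 2-sphere bounding a ball that allows an isotopy reducing $|\mathcal{T}\cap \mathcal{T}'|$. After this reduction, all intersection circles are essential in both tori, so $\mathcal{T}\setminus\mathcal{T}'$ is a disjoint union of essential annuli embedded in the complementary pieces of $\mathcal{T}'$. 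The hard part is to remove these annuli: in an atoroidal piece, an essential annulus with both boundary components on the same torus forces the piece to be Seifert fibered, a contradiction, while in a Seifert fibered piece one invokes the classification of essential annuli (vertical or horizontal, up to isotopy, as follows from Proposition~\ref{propClass} and Proposition~\ref{prophorizonta}) together with the minimality of $\mathcal{T}$ and $\mathcal{T}'$ to isotope them away. Once $\mathcal{T}$ and $\mathcal{T}'$ are disjoint, each torus of $\mathcal{T}$ must be parallel to one of $\mathcal{T}'$, for otherwise it could be added to $\mathcal{T}'$ without violating incompressibility and pairwise non-parallelism, contradicting the minimality of $\mathcal{T}'$. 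This matching then provides the required ambient isotopy.
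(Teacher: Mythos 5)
The paper does not prove Theorem~\ref{jsj}; it records the statement and cites Jaco--Shalen~\cite{JS} and Johannson~\cite{otherJ}, so there is no in-paper proof for your sketch to be compared against. Judged on its own, the existence half of your argument (iterative cutting along essential tori, termination via Kneser--Haken finiteness, passing to a minimal-cardinality subcollection) is sound in outline. The uniqueness half, however, ends with a step that does not work: you argue that once $\mathcal{T}$ and $\mathcal{T}'$ have been made disjoint, a torus $T\in\mathcal{T}$ not parallel to any member of $\mathcal{T}'$ could be ``added to $\mathcal{T}'$ \ldots contradicting the minimality of $\mathcal{T}'$.'' But minimality says no \emph{smaller} collection has the decomposition property; it asserts nothing about whether $\mathcal{T}'$ can be enlarged. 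Adding another disjoint, incompressible, non-parallel torus to $\mathcal{T}'$ just produces a larger valid collection and contradicts nothing.

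The correct direction of the argument is the reverse: such a $T$ lies in the interior of a complementary piece $P$ of $\Ma\setminus\mathcal{T}'$ as a non-boundary-parallel incompressible torus, so $P$ cannot be atoroidal and must be Seifert fibered; then $T$ is isotopic to a vertical torus of $P$, and one must show that the Seifert fibrations on the pieces of $\Ma\setminus\mathcal{T}$ adjacent to $T$ extend coherently across $T$, so that $\mathcal{T}\setminus\{T\}$ is still a valid decomposition, contradicting minimality of $\mathcal{T}$. That fibration-extension argument is precisely the hard content of the uniqueness statement and is the same difficulty you gesture at in the ``isotope the annuli away'' step; it does not follow from Propositions~\ref{propClass} and~\ref{prophorizonta} (which classify fibrations and horizontal surfaces, not essential annuli). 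Relatedly, the claim that an essential annulus in an atoroidal piece forces the piece to be Seifert fibered is true but nontrivial (it is essentially Remark~\ref{smallseifert} combined with the annulus/characteristic-submanifold theory), and your argument only treats annuli with both boundary circles on one boundary torus; the case of boundary circles on two distinct components of $\partial P$ is left unaddressed.
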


Theorem~\ref{jsj} gives a good reason for we to work
with incompressible tori, since using this concept avoids 
artificial decompositions such as cutting $\sn3$ by some
{\em knotted torus}, separating it into a 
component that is a solid torus
and another component a knot complement 
(we will revisit knot complements in Section~\ref{sechyperb}).

After the JSJ decomposition, the next step in order to classify 
the topology of 3-manifolds was to understand atoroidal and
Seifert fibered compact 3-manifolds with (possibly empty) toroidal
boundary that appeared in a finer version of the JSJ decomposition
(see Remark~\ref{remgeodec} in Section~\ref{secgeomm}). 
However, this was
not an easy task, as it is easy to assume because the Poincaré 
Conjecture, which arguably 
dealt with the simplest possible topology among
3-manifolds, was still open
and would play a definite role on this subject.
The next step towards this goal was given by Thurston,
which put the Poincaré Conjecture as a particular case
in a broad context, by understanding that
that geometrization could be achieved for toroidal decompositions
for 3-manifolds. We remark that there are several equivalent (but 
perhaps not so easily seen as being equivalent) statements of the
geometrization in the literature, 
and we will focus in presenting the ones
that appear more geometric.

\subsection{Geometrization of 3-manifolds}\label{secgeomm}

The history of geometrization actually starts with Henri
Poincaré, which, after proving the Uniformization theorem
(Theorem~\ref{geomSf}, also known as the geometrization for surfaces),
developing several important tools for
geometry and topology, such as homology, homeomorphism and
fundamental group, started to wonder whether homology 
could be sufficient to characterize a topology 
(the answer is no, as Poincaré himself
answered with his example of a {\em homology sphere}), generating,
in this process,
the Poincaré Conjecture. 
But it was William Thurston that was able to tackle the 
problem of bringing the geometrization
for 3-manifolds. In fact, Thurston proved 
several
results concerning geometrization of 3-manifolds, including the fact
that there are 8 maximal 
{\em model geometries} (see Definition~\ref{ModelG} below)
of dimension 3 that admit 
compact quotients, and the geometrization itself for a broad class
of 3-manifolds. For his groundbreaking work, he was awarded with
a Fields medal in the ICM held in 
Warsaw, Poland, 1982\footnote{Actually, 
due to the introduction
of the Martial Law in Poland in December 1981, the
conference was postponed until 1983.}.

\begin{definition}[Model geometry]\label{ModelG}
A {\em geometry} $\bbX$ is a pair $(X,G)$ where $X$ is a
simply connected manifold and $G$ is a Lie group acting on 
$X$ transitively, via diffeomorphisms, and with 
compact isotropy\footnote{If $G$ is a group
acting on a manifold $X$, for each $x\in X$ the isotropy group
over $x$ is the stabilizer subgroup $G_x = \{g\in G\mid gx = x\}$.}
groups. When the group $G$ is maximal with respect to 
all Lie groups acting on $X$ transitively, via diffeomorphisms and
with compact isotropy groups, $\bbX = (X,G)$ is called
a {\em model geometry}\footnote{Note that if $(X,G)$ is a 
geometry that is not maximal, it
can always be extended to a model geometry $(X,G')$.}.
We say that two geometries $(X,G)$ and $(X',G')$ are
equivalent if there exists a diffeomorphism $X\to X'$ that maps
the action of $G$ to the action of $G'$.
\end{definition}

Geometrically, a model geometry can be seen as a simply connected
homogeneous manifold $X$, together with the action of its
full isometry group ${\rm ISO}(X)$, 
and two geometries are equivalent if
the corresponding Riemannian manifolds are isometric. 
Using this point of view, we say that
a manifold $\Ma$ has a {\em geometric structure} modelled over 
a geometry
$\bbX = (X,{\rm ISO}(X))$ if $\Ma$ admits a Riemannian metric such that
its Riemannian universal covering is isometric to $X$,
and we say that such a geometric structure is {\em complete} 
if this metric over $\Ma$ is complete. Next, we present a more
rigorous, group-theoretical, definition for a geometric structure.
 
\begin{definition}[Geometric structure]
Let $\Ma$ be a manifold. We say that $\Ma$ admits a
{\em geometric structure} over 
a geometry $\bbX = (X,G)$ if there exists
a diffeomorphism between $\Ma$ and $X/\Pi$, where
$\Pi$ is a discrete subgroup of $G$ acting freely on $X$.
\end{definition}

Using the concepts of a model geometry and of a geometric structure,
we notice that Theorem~\ref{geomSf} has the following interpretation.

\begin{theorem}[Geometrization of surfaces]
\label{geomSf2}
Let $S$ be an orientable, closed surface. Then, 
$S$ admits a geometric structure over a model geometry
$\bbX$. Furthermore,
if $g$ is the genus of $S$, it holds that
$g=0$ if and only if $\bbX = \sn2$, 
$g=1$ if and only if $\bbX = \R^2$ and 
$g\geq2$ if and only if $\bbX = \hn2$.
\end{theorem}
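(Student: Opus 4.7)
The plan is to reduce this restated version of geometrization for surfaces to the classical Uniformization Theorem~\ref{geomSf} together with the classification of two-dimensional space forms. The content of Uniformization already identifies, for each genus $g$, the sign of a constant curvature metric on $S$; what remains is to translate ``admits a constant curvature metric'' into ``admits a geometric structure over a model geometry'' in the sense of Definition~\ref{ModelG}, and to verify that the three space forms $\sn2,\,\R^2,\,\hn2$ really give rise to model geometries.

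First, I would apply Theorem~\ref{geomSf} to produce a Riemannian metric $g_S$ on $S$ of constant curvature $k$, with the sign of $k$ determined by the genus via the trichotomy in that theorem. After rescaling by a homothety, I may assume $k\in\{-1,0,1\}$. Since $S$ is closed, $g_S$ is complete by Hopf-Rinow, and hence so is the lifted metric $\tilde{g}$ on the universal cover $\tilde{S}$. Now $(\tilde{S},\tilde{g})$ is a simply connected, complete Riemannian surface of constant curvature $k$, and the two-dimensional Killing-Hopf space form theorem identifies it isometrically with a unique $X\in\{\sn2,\R^2,\hn2\}$, according as $k=1,0,-1$.

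Next, I would write $S=X/\Pi$ where $\Pi\cong\pi_1(S)$ acts by deck transformations. Since the covering projection $X\to S$ is a local isometry, every deck transformation lies in $G\defeq{\rm ISO}(X)$. The action of $\Pi$ on $X$ is free (deck transformations of a universal cover are always free) and properly discontinuous (it is a discrete group of isometries acting on a proper metric space with compact quotient). Each of the isometry groups ${\rm ISO}(\sn2),\,{\rm ISO}(\R^2),\,{\rm ISO}(\hn2)$ is a Lie group acting transitively with compact point stabilizers, so $\bbX=(X,G)$ is a geometry in the sense of Definition~\ref{ModelG}, and maximality among Lie groups acting transitively with compact stabilizers is classical in dimension $2$, so $\bbX$ is in fact a model geometry. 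Therefore $S=X/\Pi$ exhibits the desired geometric structure. The genus correspondence is then immediate from Theorem~\ref{geomSf}: $g=0\Leftrightarrow k>0\Leftrightarrow X=\sn2$, $g=1\Leftrightarrow k=0\Leftrightarrow X=\R^2$, and $g\geq2\Leftrightarrow k<0\Leftrightarrow X=\hn2$.

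The main obstacle is essentially bookkeeping rather than a deep difficulty, since the hard analytic content is entirely absorbed by Uniformization. The one point requiring a little care is the maximality clause in the definition of a model geometry, i.e.\ checking that no Lie group strictly larger than ${\rm ISO}(X)$ acts transitively on $X$ with compact stabilizers; this follows from the observation that any such action must preserve a Riemannian metric (by averaging over a stabilizer), and a simply connected surface admitting a transitive isometric action with compact isotropy is a space form whose full isometry group is already ${\rm ISO}(X)$.
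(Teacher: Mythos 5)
Your proposal is correct and follows essentially the same route as the paper: the paper's own justification (sketched informally right after Theorem~\ref{geomSf}) is precisely to invoke Uniformization, pass to the Riemannian universal cover to identify one of the space forms $\sn2$, $\R^2$, $\hn2$, and realize $S$ as a quotient $X/H$ with $H\subset{\rm ISO}(X)$. Your explicit verification of the maximality clause in Definition~\ref{ModelG} and the freeness/proper discontinuity of the deck group merely spells out bookkeeping the paper leaves implicit.
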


A consequence of Theorem~\ref{geomSf2} is that {\em any closed, 
orientable surface is geometrizable} and that there are 
3 possible model geometries (in fact, if a manifold admits a 
geometric structure over a model geometry $\bbX$, 
although the geometric structure is not unique, the geometry
is).
In the context of 3-manifolds, it is not difficult
to find examples of closed, orientable 3-manifolds that do not
admit any geometric structure; for instance, a nontrivial 
connected sum $M_1\# M_2$ does not admit a geometric structure,
with the unique exception of $\R\mathbb{P}^3\#\R\mathbb{P}^3$.

But even among orientable, closed irreducible
3-manifolds we may find examples that do not admit a geometric 
structure\footnote{Let $M_1$ and $M_2$ be compact, irreducible 
3-manifolds with
both $\partial M_1$ and $\partial M_2$ diffeomorphic to a 2-torus $T$.
In comparison to the connected sum, we may 
choose a gluing diffeomorphism 
$\varphi\colon \partial M_1\to \partial M_2$ to obtain 
a 3-manifold 
$M = (M_1\cup M_2)/(\partial M_1 \ni x~\varphi(x)\in \partial M_2)$.
Depending on $\varphi$ and on $M_1,\,M_2$, it holds that $M$ will be 
irreducible and not geometrizable.}.
Therefore, we need to consider a further 
decomposition to obtain geometrization.
The natural context to do so is to attempt 
to show that the pieces that appear in the JSJ decomposition
of an irreducible closed 3-manifolds (Theorem~\ref{jsj}),
which are compact manifolds with toroidal boundary, are geometric. 
More precisely, we have the following definition.

\begin{definition}\label{geom}
Let $\Ma$ be a compact 3-manifold with (possibly empty)
toroidal boundary. We say that $\Ma$ is {\em geometric} if
${\rm int}(\Ma)$ admits a complete geometric structure 
of finite volume.
\end{definition}

As explained above, the main goal of the geometrization is to 
obtain a standard decomposition of an orientable, irreducible, 
closed 3-manifold such that any component is geometric, and the
JSJ decomposition is the natural decomposition to start the 
analysis. However, there exists one final obstacle to 
avert.

\begin{remark}[Geometric decomposition]\label{remgeodec}
There is one {\em special} compact 3-manifold with toroidal boundary
that may appear as a component of the JSJ decomposition, which 
is the total space of the 
oriented twisted $[0,1]$-bundle over the Klein
bottle\footnote{Let $f\colon \bbT^2\to \bbK$ denote the oriented
double cover of the Klein bottle $\bbK$ by the 2-torus $\bbT^2$ and
let $\sigma\colon \bbT^2\to \bbT^2$ be the nontrivial covering
transformation related to $f$. Then, $\bbK\wt{\times}[0,1]$ is
diffeomorphic to the quotient $(\bbT^2\times[-1,1])/\varphi$, 
where $\varphi\colon \bbT^2\times[-1,1]\to \bbT^2\times[-1,1]$ is
defined as $\varphi(x,t) = (\sigma(x),-t)$.
Then, $\bbK\wt{\times}[0,1]$ is orientable and has a central
Klein bottle identified with $\bbT^2\times\{0\}/\varphi$.},
denoted by 
$\mathbb{K}\wt{\times}[0,1]$. This manifold
is not geometric, although its interior admits a complete
geometric flat (i.e., modelled over $\R^3$) structure, but of
infinite volume. In order to avoid the issue of a nongeometric
component appearing in a {\em good} decomposition by tori, if
$\Ma\neq \bbK^2\wt{\times}[0,1]$ is as in Theorem~\ref{jsj} and
$U$ is one of the components of a minimal JSJ decomposition
which is diffeomorphic to $\mathbb{K}\wt{\times}[0,1]$,
we may replace
the torus boundary $T = \partial U$ in the decomposition
by the central Klein bottle of $U$, thus generating a different
decomposition for $\Ma$. 
After performing this procedure on each component
diffeomorphic to $\mathbb{K}\wt{\times}[0,1]$, we obtain a 
new decomposition for $\Ma$, where each component is now 
a compact manifold with (possibly empty) boundary composed
of tori and Klein bottles. This decomposition (which, when minimal, is
also unique up to isotopy) is called the {\em geometric decomposition}
of $\Ma$, for more details see~\cite[Section~11.5.3]{martelli}.
\end{remark}

Having defined what we mean by a {\em geometric} manifold
and by the {\em geometric decomposition} of a closed, orientable
3-manifold, we will
next present Thurston's geometrization conjecture.

\begin{conjecture}[Thurston, 1982~\cite{thur3}]\label{conjgeometr}
Let $\Ma$ be a closed, orientable and irreducible 3-manifold. Then,
the geometric decomposition of $\Ma$ is in such a way that
each resulting component is geometric.
\end{conjecture}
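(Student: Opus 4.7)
The plan is to follow the Hamilton--Perelman approach via Ricci flow with surgery~\cite{per1,per2,per3}. Starting from an arbitrary Riemannian metric $g_0$ on $\Ma$, I would evolve it under Hamilton's Ricci flow
$$\frac{\partial g}{\partial t} = -2\,{\rm Ric}(g),$$
which tends to regularize the geometry but in dimension three generically develops finite-time singularities. To continue past these singularities, I would implement Perelman's Ricci flow with surgery: whenever the curvature blows up at some time $T$, one performs topological surgeries on high-curvature $\epsilon$-necks (regions close, after rescaling, to $\sn2 \times \R$), cutting them and capping off with standard spherical caps. Since $\Ma$ is irreducible (Definition~\ref{defirred}), every embedded 2-sphere arising in surgery bounds a 3-ball, so surgery does not alter the topology in any essential way; it merely allows the flow to be continued.

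The crucial analytical ingredients are Perelman's no-local-collapsing theorem (giving a uniform lower bound on the injectivity radius at bounded curvature, via monotonicity of the $\mathcal{L}$-reduced volume), the canonical neighborhood theorem (classifying sufficiently high-curvature regions as $\epsilon$-necks, $\epsilon$-caps, or components of positive curvature that shrink to a point), and control ensuring the surgery times do not accumulate on any finite interval. Taking $t \to \infty$ along the surgered flow, one obtains a thick-thin decomposition of $\Ma$: after rescaling by $1/t$, the thick part converges subsequentially (in the pointed Cheeger--Gromov sense) to a complete hyperbolic 3-manifold of finite volume, while the thin part collapses under bounded sectional curvature. By the collapsing theorem for 3-manifolds (sketched by Perelman and subsequently filled in by Shioya--Yamaguchi, Morgan--Tian, Kleiner--Lott, and Bessi\`eres--Besson--Boileau--Maillot--Porti), any 3-manifold collapsed with bounded curvature is a graph manifold, hence decomposes along tori into Seifert fibered pieces.

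I would then identify this analytic thick-thin decomposition with the topological geometric decomposition of Remark~\ref{remgeodec}. The separating tori (and Klein bottles) between thick hyperbolic regions and thin Seifert regions must be incompressible: otherwise a compression disk, combined with irreducibility and the asphericity of the thick and thin pieces, would yield a contradiction, and the $\Z^2$ cusp subgroups of the hyperbolic pieces force boundary tori to be $\pi_1$-injective. By the uniqueness clause of Theorem~\ref{jsj}, this collection coincides up to isotopy with the minimal JSJ collection. Each resulting hyperbolic component is $\hn3$-geometric by construction, and each Seifert fibered component carries one of the six non-hyperbolic Thurston model geometries ($\sn3$, $\R^3$, $\sn2 \times \R$, $\hn2 \times \R$, ${\rm Nil}$, or $\wt{{\rm SL}}_2(\R)$), determined by the sign of the orbifold Euler characteristic of the base (Proposition~\ref{propbaseorbifold}) and the sign of the Euler number $e(\eta)$ of the fibration.

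The genuinely hard part, and Perelman's central breakthrough, is controlling the surgery process: establishing the canonical neighborhood theorem globally, choosing the surgery parameters so that the flow extends through all positive time with only finitely many surgeries on any compact interval, and proving the collapsing theorem that identifies the thin part as a graph manifold. Hamilton's flow alone, without these structural results, either collapses or forms singularities with no manageable control; once these deep ingredients are in hand, the remaining steps (short-time existence, maximum principles for curvature, and the topological identification of the geometric pieces) are essentially classical.
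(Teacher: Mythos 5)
Your proposal follows the Hamilton--Perelman Ricci flow with surgery program, which is exactly the approach the paper itself alludes to: the paper does not give a proof but states that Conjecture~\ref{conjgeometr} was settled by Perelman~\cite{per1,per2,per3} and offers a brief intuitive sketch (Ricci flow regularizing the metric, surgeries controlling finite-time singularities, the thick-thin split producing the geometric decomposition) that your outline expands upon faithfully. The one small slip is that the table relating the sign of $\chi$ and $e(\eta)$ to the Seifert geometry is Theorem~\ref{classif}, not Proposition~\ref{propbaseorbifold}, but this does not affect the argument.
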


The geometrization conjecture (which implies the Poincaré's 
conjecture, as we will see later on this section) was proved by
G. Perelman on a series of articles~\cite{per1, per2, per3} posted
on the ArXiv but never officially published. For proving
the Geometrization conjecture (and, consequently, 
the Poincaré's conjecture), Perelman was awarded with a Fields Medal
in 2006 and with a one million dollars prize given by the
Clay Institute for Mathematics for solving one of the
so-called Millennium Problems. He refused both prizes,
and later he explained
\begin{center}
{\em The Fields Medal was completely irrelevant for me. Everybody
understood that if the proof is correct then no other recognition is needed.}
\end{center}
After Perelman's refusal on the prize, the Clay Institute used 
the one million dollars dedicated for the prize
to fund the {\em Poincaré Chair}
at the Paris Institut Henri Poincaré.
The arguments that Perelman used in order to prove the geometrization
conjecture were deeply analytic, based on the program proposed by 
R. Hamilton~\cite{ham1, ham2, ham3} using the Ricci Flow.
A more detailed version of Perelman's arguments can be found in 
articles such as B. Kleiner and J. Lott~\cite{KleiLott} or
in the monographs by J. Morgan and G. Tian~\cite{MorganT1,MorganT2}.

Although Thurston was not able to prove Conjecture~\ref{conjgeometr}
in its full generality, his work
completely revolutionized 3-dimensional topology. 
First, we mention his classification of
all possible maximal geometries which admit
compact quotients.

\begin{theorem}[Thurston~\cite{thur2}]\label{8geom}
Let $(X,G)$ be a model geometry that admits a compact quotient.
Then, $(X,G)$ is equivalent to one of the eight
geometries $(\bbX,{\rm ISO}(\bbX))$, where $\bbX$ is one 
of the following Riemannian manifolds:
\begin{equation}
\R^{3},\, \hn3,\, \sn3,\,\hn2\times\R,\,\sn2\times\R,
\wt{{\rm SL}}_{2}(\R),\, {\rm Nil}_{3},\, {\rm Sol}_{3}.
\end{equation}
\end{theorem}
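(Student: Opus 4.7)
The plan is to classify maximal geometries $(X,G)$ by analyzing the isotropy representation at a chosen point. Fix $p \in X$ and note that since $G_p$ is compact, averaging a background Riemannian metric over $G_p$ produces a $G$-invariant metric; by maximality of $G$ we then have $G = {\rm ISO}(X)$, so the isotropy representation identifies $G_p$ with a closed subgroup of ${\rm O}(3)$ (effectivity makes this faithful). The identity component $(G_p)^0$ is then one of $\{1\}$, ${\rm SO}(2)$, or ${\rm SO}(3)$, and each case yields a distinct family of model geometries.

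If $(G_p)^0 = {\rm SO}(3)$, the isotropy acts transitively on the unit sphere in $T_pX$, so by homogeneity $X$ is isotropic and therefore has constant sectional curvature; after rescaling, this yields exactly the three space forms $\R^3$, $\sn3$, $\hn3$. If $(G_p)^0 = {\rm SO}(2)$, the isotropy fixes a line in $T_pX$, and transporting by $G$ produces a $G$-invariant line field on $X$, whose integral curves are geodesics and which defines a $G$-equivariant fibration of $X$ over a simply connected 2-dimensional base $\S$ of constant curvature; by Theorem~\ref{geomSf2} the base is $\sn2$, $\R^2$, or $\hn2$. Analyzing whether the bundle is metrically a product or twisted and whether the fiber length is fixed yields the product geometries $\sn2\times \R$ and $\hn2\times \R$, the twisted geometries $\wt{{\rm SL}}_2(\R)$ over $\hn2$ and ${\rm Nil}_3$ over $\R^2$, and a candidate Berger-type twisted bundle over $\sn2$ that must be discarded since its isometry group embeds into that of the round $\sn3$, violating maximality.

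If $(G_p)^0$ is trivial, then $G_p$ is finite and $\dim G = 3$, so $G$ acts almost simply transitively and $X$ is (up to finite cover) a simply connected 3-dimensional Lie group with a left-invariant metric. Here I would invoke the Bianchi classification of 3-dimensional real Lie algebras and test each representative against two requirements: maximality of $G$ within ${\rm ISO}(X)$, and the existence of a compact quotient. Non-unimodular Bianchi types (III, IV, V, and the families VI${}_a$, VII${}_a$ with $a \neq 0$) admit no lattices and are excluded; among the unimodular types, $\R^3$, $\widetilde{E(2)}$, and ${\rm SU}(2)$ either reproduce or are absorbed into geometries appearing in the previous cases, and the only genuinely new surviving geometry is ${\rm Sol}_3$.

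The main obstacle is the 1-dimensional isotropy case: no single step is hard, but one must carefully separate genuinely maximal geometries from those absorbed by a larger symmetry group, and in particular recognize that the natural twisted circle bundle over $\sn2$ fails maximality. A secondary difficulty is the compact quotient condition for candidates coming from non-unimodular Lie groups; the key fact is that if $G$ is non-unimodular the modular homomorphism $G \to \R_{>0}$ restricts nontrivially to any would-be cocompact lattice, which is incompatible with compactness of the quotient, ruling out the offending Bianchi types uniformly.
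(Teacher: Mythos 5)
The paper does not give a proof of this theorem; it is simply cited to Thurston~\cite{thur2} (with Scott~\cite{scott} being the standard reference for a full argument), so there is no internal proof to compare against. Your sketch follows the standard classification strategy used by Thurston and Scott: fix $p\in X$, use compactness of $G_p$ to average a metric and faithfully represent $G_p$ in ${\rm O}(3)$, then branch on $(G_p)^0$ being ${\rm SO}(3)$ (constant curvature: $\R^3,\sn3,\hn3$), ${\rm SO}(2)$ (invariant line field, fibration over a $2$-dimensional constant-curvature base, yielding $\sn2\times\R$, $\hn2\times\R$, ${\rm Nil}_3$, $\wt{\rm SL}_2(\R)$, with the Berger-sphere candidate over $\sn2$ discarded by non-maximality), or trivial (simply transitive Lie group, Bianchi classification, only ${\rm Sol}_3$ surviving). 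That is the right architecture, and the key discard (twisted bundle over $\sn2$ absorbed into $\sn3$) is correctly identified.

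One detail in your final paragraph is stated backwards. You write that for a non-unimodular $G$ the modular homomorphism ``restricts nontrivially to any would-be cocompact lattice.'' In fact the opposite is true: the modular function of $G$ always restricts \emph{trivially} to a lattice $\Gamma$ (a fundamental domain $F$ and its translate $F\gamma$ have the same measure, so $\Delta(\gamma)=1$ for all $\gamma\in\Gamma$). The contradiction for non-unimodular $G$ is that $\Delta$ is then a nontrivial homomorphism $G\to\R_{>0}$ vanishing on $\Gamma$, so $G/\Gamma$ submerses onto $\R$ with $\Delta$-fibers, contradicting compactness. Also, in the discrete-isotropy case it is worth saying explicitly that the unimodular types $\R^3$, $\widetilde{E(2)}$, ${\rm SU}(2)$, ${\rm Nil}_3$, and $\wt{\rm SL}_2(\R)$ all admit larger isometry groups (so their maximal model already has positive-dimensional isotropy and appears in the earlier cases); only ${\rm Sol}_3$ has a maximal isometry group with finite isotropy. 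With those two corrections your outline is a faithful account of the standard proof.
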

\begin{remark}
The geometries given by $\R^{3},\, \hn3,\,
\sn3,\,\hn2\times\R$ and $\sn2\times\R$ are well-known. 
For completeness, we will briefly introduce
the geometries
$\wt{{\rm SL}}_{2}(\R),\, {\rm Nil}_{3},\, {\rm Sol}_{3}$.
They are all defined as 
Lie groups endowed with a left-invariant metric.
\begin{itemize}
\item $\wt{{\rm SL}}(2,\R)$ is the universal covering of the 
special linear group ${\rm SL}(2,\R)$, which is the group of
$2\times2$ real matrices with determinant equal to 1. 
The family of non-isometric 
left invariant metrics on $\wt{{\rm SL}}(2,\R)$ 
has three parameters (which are the three nonzero 
{\em structure constants} when we regard $\wt{{\rm SL}}(2,\R)$ 
as a unimodular Lie group, for more details 
see~\cite[Section~2.7]{MPbook}). Inside this family, there is a 
two-parameter family of metrics for which the isometry group
has dimension four, and when we think as $\wt{{\rm SL}}(2,\R)$
as a model geometry, we think it is endowed with any metric
in this family (for a general metric, the isometry group will have
dimension three and will not be maximal).
\item The Lie group ${\rm Nil}_3$ is easily defined as
the group of upper triangular $3\times 3$ real matrices with diagonal
entries equal to one:
$${\rm Nil}_3 = \left\{\left(
\begin{array}{ccc}
1&x&z\\0&1&y\\0&0&1
\end{array}\right)\mid x,y,z\in \R\right\}.$$
Up to homotheties, there is only one left-invariant metric on 
${\rm Nil}_3$, and its isometry group has dimension four.
\item The {\em solvable group} ${\rm Sol}_3$
is defined as a matrix group as
$${\rm Sol}_3 = \left\{\left(
\begin{array}{ccc}
e^{z} & 0 & x\\
0 & e^{-z} & y \\
0 & 0 & 1
\end{array}\right)\mid x,y,z\in \R\right\}.$$
${\rm Sol}_3$ is the {\em least symmetric} model
geometry of all, since any left-invariant metric 
(it admits a 2-parameter family of them) on it gives rise to a 
3-dimensional isometry group. The left-invariant metrics that
makes this geometry maximal are the ones
that admit some planar reflections, where the
full isometry group has 8 connected components.
\end{itemize}
\end{remark}

The eight model geometries that appear in Theorem~\ref{8geom}
are presently known as {\em Thurston's geometries}. Note that
there are infinitely many non-equivalent model geometries
that do not admit any compact quotient, but, as we will see next,
they are not relevant for geometrization of 3-manifolds.

Other than classifying 3-dimensional model geometries that 
admit compact quotients, Thurston was
able to prove Conjecture~\ref{conjgeometr} for
the following class of manifolds.

\begin{definition}[Haken manifold]\label{defHaken}
A compact, orientable 3-manifold
$\Ma$
is called {\em Haken} if it is irreducible and it
contains an embedded, two-sided, incompressible surface $\S$ of 
genus $g\geq1$.
\end{definition}

A simple consequence of Definition~\ref{defHaken} is that
if $\Ma$ is a compact, orientable and irreducible manifold 
with toroidal boundary, it is Haken. In particular, 
although geometrization is a result for closed 3-manifolds,
a great part of Thurston's work was dedicated on 
understanding geometrization of noncompact manifolds, seen as the
interior of compact manifolds with toroidal boundaries.
The geometrization theorem proved by Thurston can be stated 
as follows.

\begin{theorem}[Thurston's geometrization theorem]\label{thmthgeom}
Let $\Ma$ be a compact, orientable, Haken 3-manifold with either 
empty or toroidal boundary that is not diffeomorphic to
$\bbD\times\sn1,\,\bbT^2\times[0,1]$ or to $\bbK\wt{\times}[0,1]$.
Then, $\Ma$ admits a geometric decomposition such
that each resulting component is geometric and modelled
by one of the eight model geometries that admit a compact quotient.
\end{theorem}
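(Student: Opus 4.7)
The plan is to apply the geometric decomposition from Remark~\ref{remgeodec} and then install a model geometry on each resulting piece, using two very different tools: a bundle-theoretic analysis for Seifert pieces, and the deep hyperbolization machinery for atoroidal pieces. First, I would apply Theorem~\ref{jsj} to $\Ma$ and convert the minimal JSJ torus system into the geometric decomposition by swapping each torus bounding a $\bbK\wt{\times}[0,1]$ component for the central Klein bottle. The exclusion of $\bbD^2\times\sn1$, $\bbT^2\times[0,1]$, and $\bbK\wt{\times}[0,1]$ from the hypothesis rules out precisely the degenerate inputs for which the decomposition would be vacuous or force a non-geometric summand. The problem then reduces to showing that every resulting component $N$---either atoroidal or Seifert fibered, possibly with toroidal or Klein-bottle boundary---admits on its interior a complete finite-volume geometric structure modelled on one of the eight geometries of Theorem~\ref{8geom}.

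For a Seifert fibered $N$, Proposition~\ref{propClass} presents $N$ as $M(S,\frac{r_1}{s_1},\ldots,\frac{r_n}{s_n})$ with base orbifold $X$, and I would read the model off two invariants: the orbifold Euler characteristic $\chi^{\rm orb}(X)$ and, when $\partial N=\emptyset$, the Euler number $e(\eta)$. The six Seifert model geometries pair off: $\sn2\times\R$ versus $\sn3$ when $\chi^{\rm orb}(X)>0$, $\R^3$ versus ${\rm Nil}_3$ when $\chi^{\rm orb}(X)=0$, and $\hn2\times\R$ versus $\wt{{\rm SL}}_2(\R)$ when $\chi^{\rm orb}(X)<0$; in each pair the product geometry corresponds to $e(\eta)=0$ (equivalently, by Proposition~\ref{prophorizonta}, to the existence of a horizontal surface), whereas a nonzero Euler number forces the twisted line-bundle variant. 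Concretely, the metrics are built by passing to a finite cover that trivializes the orbifold base, applying the surface uniformization (Theorem~\ref{geomSf2}) to the base, and lifting a product or twisted-product invariant metric which then descends to $N$ by equivariance.

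For an atoroidal $N$, the task reduces to Thurston's hyperbolization theorem: ${\rm int}(N)$ admits a complete finite-volume hyperbolic structure, together with the separate observation that a torus bundle over $\sn1$ with Anosov monodromy (which is Haken but admits neither a Seifert structure nor a hyperbolic one) is globally ${\rm Sol}_3$, built explicitly from the hyperbolic splitting of the monodromy matrix, and is accommodated by taking its geometric decomposition to be trivial. To hyperbolize, I would exploit that $N$, being Haken, admits a hierarchy $N=N_0\supset N_1\supset\cdots\supset N_k$ in which each $N_{i-1}$ is cut from $N_i$ along a two-sided incompressible surface and $N_k$ is a disjoint union of 3-balls (which are trivially hyperbolizable). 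Reversing the hierarchy, I would install convex cocompact hyperbolic structures one step at a time via a gluing argument: given a hyperbolic structure on $N_i$, finding one on $N_{i-1}$ amounts to producing a fixed point of the skinning self-map on the associated Kleinian deformation space. In the generic (acylindrical) case this fixed point is supplied by Thurston's Bounded Image Theorem, and at any fibered stage with pseudo-Anosov monodromy the required convergence of quasi-Fuchsian approximants is provided by the Double Limit Theorem.

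The hard part is unambiguously this last step. The Bounded Image Theorem and the Double Limit Theorem both rest on substantial Kleinian-group and Teichm\"uller-theoretic machinery---algebraic and geometric limits of representations, compactness in deformation spaces, and a no-accidental-parabolics argument to certify that the limit structure is geometrically finite of the correct topological type---and it is here that the proof ceases to be inductive bookkeeping and becomes genuinely analytic. Once these inputs are granted, the induction closes, every atoroidal Haken piece is shown to be hyperbolic, and combined with the Seifert classification (and the explicit Sol construction in the exceptional torus-bundle case) this geometrizes every component of the geometric decomposition.
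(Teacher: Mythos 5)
The paper does not actually prove Theorem~\ref{thmthgeom}: it is a survey article, and this result is stated without proof, with the authors explicitly deferring to Thurston's original work and noting that the proof is ``out of the scope of this manuscript.'' Your proposal therefore cannot be compared against an in-paper argument; what I can do is assess whether your blind outline correctly reflects the actual structure of Thurston's proof.

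On that score you have the right architecture. Reducing to the geometric decomposition, reading the six Seifert geometries off $(\chi^{\rm orb}, e)$ exactly as in Theorem~\ref{classif} and Proposition~\ref{prophorizonta}, and then handling the atoroidal Haken pieces by a hierarchy and an inductive gluing via the skinning map---with Thurston's Bounded Image Theorem resolving the acylindrical stages and the Double Limit Theorem resolving the fibered stage---is precisely the modern account of Thurston's hyperbolization of Haken manifolds. You are also right to flag that the hierarchy/fixed-point step is where all the analytic weight sits; that is the honest summary of why this theorem took Thurston a book-length argument (and why Otal and Kapovich each wrote full treatments of just that piece).

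Two points deserve more care. First, your treatment of the Sol torus bundle sits awkwardly with your own framing. You begin from the minimal JSJ system as in Theorem~\ref{jsj} and then perform the Klein-bottle swap of Remark~\ref{remgeodec}. But for an orientable torus bundle with Anosov monodromy, the minimal JSJ system is the single fiber torus, and cutting yields $\bbT^2\times[0,1]$---a piece you have (correctly) excluded from being geometric and which is not $\bbK\wt{\times}[0,1]$, so the Klein-bottle swap does nothing. You cannot simultaneously ``start from the minimal JSJ system'' and ``take the trivial decomposition'' in this case; you must explicitly redefine the geometric decomposition so that a manifold already carrying a Sol structure is not cut at all (as in~\cite[Section~11.5.3]{martelli}, which the paper cites but whose Sol refinement Remark~\ref{remgeodec} suppresses). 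Second, a small technical slip: for hierarchy stages with torus boundary the inductively produced structures are geometrically finite with rank-two cusps, not convex cocompact; convex cocompactness is the correct hypothesis only in the acylindrical, cusp-free case where the Bounded Image Theorem applies directly. Neither issue undermines the outline, but both are exactly the kind of boundary-case accounting that tends to be where a proof of this theorem goes wrong in practice.
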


The three exceptions in Theorem~\ref{thmthgeom}
are necessary for two reasons. 
First, they do not appear as components in the geometric
decomposition of any closed, orientable 3-manifold. Secondly, 
they are not geometric in the sense that any complete 
geometric structure in their interiors is of infinite volume.
Also, if a closed manifold $\Ma$ has finite fundamental group,
then it is never Haken, so Theorem~\ref{thmthgeom}
does not apply to them (and does not help to solve the Poincaré 
conjecture). However, it is worthwhile mentioning that
Thurston's geometrization theorem (and his geometrization conjecture)
put the Poincaré conjecture in a broad perspective, and changed
the point of view that the topologist community had on the subject.
We next quote the words of John Morgan, in his talk at the 2018 
Clay Research Conference.

\begin{center}
{\em 
[Before Thurston's work,] there was no strong reason to believe that 
Poincaré's conjecture was either true or false.
[...] But the fact that you put the Poincaré conjecture, which was 
about one particular 3-manifold, in a vast conjecture that is supposed 
to classify all 3-manifolds, and has some positive evidence for it, 
makes you believe that you shouldn't spend your time looking for a 
counterexample.}
\end{center}

As we already mentioned, 
the full geometrization theorem was proved by Perelman,
and its statement, unifying both
closed (Conjecture~\ref{conjgeometr}) 
and compact 3-manifolds with toroidal boundary 
(Theorem~\ref{thmthgeom}) in the same result,
can be read as follows.

\begin{theorem}[Geometrization theorem, Perelman]\label{thmPergeom}
Let $\Ma$ be a compact, orientable, irreducible 3-manifold with either 
empty or toroidal boundary that is not diffeomorphic to
$\bbD\times\sn1,\,\bbT^2\times[0,1]$ or to $\bbK\wt{\times}[0,1]$.
Then, we can cut $\Ma$ along a finite, possibly empty collection
of incompressible, disjointly embedded
surfaces, each of which either is a torus or a Klein bottle,
such that each resulting component is geometric and modelled
by one of the eight Thurston's geometries.
\end{theorem}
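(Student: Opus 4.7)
The plan is to proceed in two stages: first, reduce $\Ma$ to pieces admitting a rigid structure by a topological decomposition, and then show that each resulting piece admits a complete geometric structure of finite volume.

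For the topological step, I would apply the JSJ decomposition (Theorem~\ref{jsj}) to cut $\Ma$ along a minimal, pairwise disjoint collection of embedded incompressible tori, obtaining components that are either atoroidal or Seifert fibered. I would then refine this to the geometric decomposition described in Remark~\ref{remgeodec}, replacing torus boundaries of any $\bbK\wt{\times}[0,1]$ component by the central Klein bottle, and absorbing any JSJ piece diffeomorphic to $\bbT^2\times[0,1]$ by re-gluing across it when the global manifold is a $\mathrm{Sol}_3$ torus bundle over $\sn1$. The three excluded manifolds in the hypothesis, namely $\bbD\times\sn1$, $\bbT^2\times[0,1]$ and $\bbK\wt{\times}[0,1]$, are precisely the compact manifolds whose interiors carry no complete geometric structure of finite volume, so their exclusion is necessary at the level of the statement.

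Next I would geometrize each component. For a Seifert fibered piece, the classification in Proposition~\ref{propClass}, together with the signs of the orbifold Euler characteristic of the base and of the Euler number $e(\eta)$ of the fibration, determines which of the six Seifert geometries $\R^3,\,\sn3,\,\sn2\times\R,\,\hn2\times\R,\,\mathrm{Nil}_3,\,\wt{\mathrm{SL}}_2(\R)$ the piece carries, and the corresponding invariant metric can be produced explicitly from a constant curvature metric on the base orbifold twisted by a connection form of the prescribed Euler class. For an atoroidal piece $N$, the expected geometry depends on its fundamental group: if $\pi_1(N)$ is virtually abelian of rank two it should be modelled on $\mathrm{Sol}_3$; if $\pi_1(N)$ is finite, on $\sn3$ (the elliptization conjecture, which contains Poincar\'e); and if $\pi_1(N)$ is infinite and contains no $\Z\oplus\Z$, on $\hn3$.

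The main obstacle will be the existence of hyperbolic and spherical structures on atoroidal pieces. When $N$ is Haken, one can mimic Thurston's inductive argument on a hierarchy of incompressible surfaces, at each stage using the skinning lemma and a fixed point theorem for the induced nonlinear map on the Teichm\"uller space of the boundary, to build the hyperbolic structure (Theorem~\ref{thmthgeom}). For the non-Haken atoroidal case and for the spherical case, I would follow the Hamilton-Perelman program: start with an arbitrary metric on $\Ma$, evolve it by Ricci flow, and perform surgery along $\varepsilon$-necks as finite-time singularities form. The two deep analytic ingredients are Perelman's no-local-collapsing theorem, which gives the canonical neighborhood structure of high-curvature regions and justifies surgery, and the long-time analysis of the flow through the thick-thin decomposition: the $\varepsilon$-thick part should converge, after rescaling, to a finite disjoint union of complete hyperbolic manifolds embedded incompressibly in $\Ma$, while the $\varepsilon$-thin part should be shown via a collapsing theorem with bounded local curvature to be a graph manifold. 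Reconciling this geometric thick-thin picture with the topological geometric decomposition is the delicate endgame; in particular, the collapsing theorem for the thin part is, in my view, the single main analytic obstacle of the whole enterprise.
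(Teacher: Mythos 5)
The paper does not actually prove Theorem~\ref{thmPergeom}. It is quoted as a black box from \cite{per1,per2,per3}; the authors state explicitly that ``Both proofs are out of the scope of this manuscript'' and offer only an informal paragraph on Hamilton's Ricci-flow program and Perelman's surgery. So there is no proof in the paper against which to check details; your sketch is necessarily being compared against the known literature rather than the source text.

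As a roadmap your proposal is broadly aligned with the standard strategy (JSJ and geometric decomposition, explicit geometrization of Seifert pieces, Thurston hyperbolization in the Haken case, Ricci flow with surgery in the non-Haken and spherical cases), and it correctly isolates the thick-thin/collapsing analysis as the hard endgame. Two points, however, are wrong or misleading. First, in your case analysis of atoroidal pieces $N$ you write ``if $\pi_1(N)$ is virtually abelian of rank two it should be modelled on $\mathrm{Sol}_3$.'' Closed $\mathrm{Sol}_3$-manifolds have fundamental group a polycyclic extension $\Z^2\rtimes_A\Z$ with $A$ Anosov; this group is solvable but \emph{not} virtually abelian. More to the point, a $\mathrm{Sol}_3$-manifold is \emph{not} atoroidal (the fiber torus is essential and not boundary-parallel, cf. Theorem~\ref{geompi1}), so it cannot arise as one of the atoroidal pieces you are classifying. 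The $\mathrm{Sol}_3$ geometry enters only at the level of the whole manifold $\Ma$ (or a double cover), precisely because for a hyperbolic torus bundle the minimal JSJ system consists of one fiber torus and yields a $\bbT^2\times[0,1]$ piece which must be re-absorbed, as you correctly note in your second paragraph; so your two paragraphs contradict each other on where $\mathrm{Sol}_3$ appears. Second, your re-gluing of $\bbT^2\times[0,1]$ JSJ pieces is a needed observation that goes beyond the paper's Remark~\ref{remgeodec} (which only treats the $\bbK\wt{\times}[0,1]$ replacement), but it should be presented as deleting a redundant torus from the decomposition, not as producing a new ``atoroidal'' piece, since the resulting closed $\mathrm{Sol}_3$-manifold still contains essential tori and is geometric despite not being atoroidal.
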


While Thurston's proof of Theorem~\ref{thmthgeom} is mostly
topological,
the proof of Theorem~\ref{thmPergeom}
when $\Ma$ is a closed, non-Haken manifold carried out by 
Perelman is radically different, being deeply analytic.
Both proofs are out of the scope
of this manuscript, but we next present an intuitive and superficial
account of Perelman's proof.
This approach was first suggested 
by Richard Hamilton, which 
defined the Ricci flow and thought that it 
could be used to prove
the geometrization conjecture. This flow may intuitively
be regarded as a heat flow for manifolds, but instead of
distributing temperature uniformly,
it changes the metric of a manifold, distributing
curvature uniformly, hopefully
converging to a stationary state where the underlying
smooth manifold now has a geometric metric.
The main difficulty on this argument 
is that the Ricci flow generates
certain singularities in finite time,
preventing one to obtain a geometric limit,
but Hamilton visualized that the singularities, if controlled,
could actually provide the geometric decomposition of the 
original manifold\footnote{In fact, the geometric decomposition is not
provided by the singularities themselves, but by components that
even after rescaling
are collapsing, in the Gromov-Hausdorff sense,
to a lower dimensional space with curvature bounded from 
below by $-1$.}. 
Perelman, after a
deep analysis that (under some technical assumptions) 
classified all possible singularities
on the Ricci flow, could then 
perform the {\em Ricci flow with surgeries},
controlling the topology of the original 
manifold where a singularity appeared and continuing the flow
past this singularity, doing so only a finite number of times and
obtaining a convergence as Hamilton envisaged.

\

When a closed 3-manifold $\Ma$ is 
geometrizable, its underlying geometry
is unique (see~\cite[Theorem~5.2]{scott}). Moreover,
it is possible
to determine the underlying geometry of $\Ma$
in terms of its topology.
The next two theorems
provide this description. The first result deals with the 
6 geometries that give rise to Seifert fibered spaces, and classify
the geometry in terms of the Euler characteristic $\chi$ of
the base orbifold and the Euler number $e$ of the Seifert fibration,
while the second treats with the ${\rm Sol}_3$ geometry.

\begin{theorem}[{P. Scott~\cite[Theorem~5.3~(ii)]{scott}}] \label{classif}
A closed 3-manifold $\Ma$ 
admits a geometric structure modelled
on $\R^3,\,\sn3,\,\sn2\times\R,\,\hn2\times\R,\wt{{\rm SL}}(2,\R)$ or
${\rm Nil}_3$ if and only if
$\Ma$ is a Seifert fibered space. In this case,
the relation between the underlying geometry of $\Ma$ and the
Euler number $e$ and the Euler characteristic $\chi$ of the Seifert
fibration is given by the following table.
\begin{center}
\begin{tabular}{ c|c c c }
  & $\chi <0$ & $\chi = 0$ & $\chi > 0$  \\  [1ex] 
\hline \\ [-1ex] 
$e=0$ & $\Hy^{2} \times \R$ &  $\R^{3}$ & $\Sph^{2} \times \R $\\  [1ex]
 $e \neq 0 $ & $\widetilde{SL}_{2}( \R )$ & $Nil_{3}$ & $\Sph^{3}    $
\end{tabular}
\end{center}
\end{theorem}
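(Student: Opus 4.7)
The plan is to establish both implications of the equivalence and the table simultaneously by relating each of the six geometries to a natural Seifert fibration, and conversely using the geometric data $\chi$ and $e$ of a Seifert fibered space to construct a geometric metric whose isometry type is forced.

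For the forward direction, I would observe that each of the six model geometries $\bbX\in\{\R^{3},\sn{3},\sn{2}\times\R,\hn{2}\times\R,\wt{{\rm SL}}_{2}(\R),{\rm Nil}_{3}\}$ carries a distinguished 1-parameter subgroup of ${\rm ISO}(\bbX)$ whose orbits foliate $\bbX$ by lines or circles: the explicit $\R$-factor in the product geometries, the Hopf flow on $\sn{3}$, the center of the Heisenberg group in ${\rm Nil}_{3}$, and the preimage of the center of ${\rm PSL}_{2}(\R)$ in $\wt{{\rm SL}}_{2}(\R)$. In each case the quotient of $\bbX$ by this flow is a Riemannian submersion onto one of the 2-dimensional space forms $\sn{2}$, $\R^{2}$, or $\hn{2}$. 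Given a closed quotient $\Ma=\bbX/\Pi$ by a discrete subgroup $\Pi\subset{\rm ISO}(\bbX)$, I would check that (possibly after passing to a finite index subgroup of $\Pi$) the action preserves this foliation, so that the flow descends to a foliation of $\Ma$ by circles; by Epstein's theorem (Theorem \ref{fibrs} is preceded by its statement) this exhibits $\Ma$ as a Seifert fibered space whose base orbifold is modelled on $\sn{2}$, $\R^{2}$, or $\hn{2}$ according to $\bbX$, hence with the correct sign of $\chi$.

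For the backward direction, let $\Ma$ be a closed Seifert fibered 3-manifold with Seifert fibration $\eta$, base orbifold $X$, and Euler number $e(\eta)$. First I would apply the orbifold uniformization theorem, which generalizes Theorem \ref{geomSf2} by allowing cone points, to equip $X$ with a metric of constant curvature whose sign matches $\mathrm{sign}(\chi(X))$. When $e(\eta)=0$, Proposition \ref{prophorizonta} provides a horizontal surface in $\Ma$, which after passing to a finite cover trivializing the fibration exhibits $\Ma$ as a Riemannian product of the constant-curvature base with $\sn{1}$, producing one of the three product geometries $\sn{2}\times\R$, $\R^{3}$, or $\hn{2}\times\R$ in the $e=0$ column. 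When $e(\eta)\neq 0$, I would construct on $\Ma$ a Kaluza--Klein type metric: choose an $\sn{1}$-connection on the fibration whose curvature 2-form pushes down to a constant (normalized by $e(\eta)$) multiple of the area form on $X$, and define the metric as the orthogonal sum of the horizontal lift of the base metric and a fixed $\sn{1}$-fiber metric. A direct computation of the left-invariant metric induced on the universal cover shows that the resulting simply connected model is $\sn{3}$, ${\rm Nil}_{3}$, or $\wt{{\rm SL}}_{2}(\R)$ according as $\chi(X)$ is positive, zero, or negative, and uniqueness of the model geometry (Definition \ref{ModelG}) then places $\Ma$ into the correct cell of the table.

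The main obstacle is the $e(\eta)\neq 0$ case: one must check that the Kaluza--Klein total-space metric is not merely smooth and $\sn{1}$-invariant but genuinely homogeneous with compact isotropy, so that its universal cover is one of the listed Thurston geometries rather than some accidental homogeneous structure; this is precisely where the normalization of the connection by the Euler number is essential, since $e(\eta)$ measures the obstruction to triviality and thus detects $\sn{3}$ versus $\sn{2}\times\R$, $\mathrm{Nil}_{3}$ versus $\R^{3}$, and $\wt{{\rm SL}}_{2}(\R)$ versus $\hn{2}\times\R$. A secondary technical difficulty is the presence of orbifold singularities on $X$: each cone point of order $p$ corresponds to a singular fiber of multiplicity $p$ in $\Ma$, so the base metric is only a metric in the orbifold sense and the horizontal lift must be defined locally by passing to uniformizing covers and verifying equivariance under the finite isotropy groups before descending to $\Ma$.
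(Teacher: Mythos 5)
The paper does not prove this theorem: it is stated with an attribution directly to Scott~\cite[Theorem~5.3~(ii)]{scott} and no argument is given, so there is no in-paper proof against which to compare. Your proposal should therefore be judged on its own, and it is essentially the standard outline from Scott's survey.

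The backward direction (Seifert fibered $\Rightarrow$ geometric) is sound in spirit: orbifold uniformization of the base plus a Kaluza--Klein metric whose connection curvature is normalized by the Euler number is exactly how the six Seifert geometries arise, and you correctly identify the two technical points that need verification (homogeneity of the resulting total-space metric, and equivariant extension across cone points). You should also make explicit that the base orbifold of a closed Seifert fibered space is a \emph{good} orbifold (Proposition~\ref{propbaseorbifold}), which is what makes the orbifold uniformization step legitimate.

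The forward direction, however, has a real gap for $\R^{3}$ and $\sn{3}$. For $\hn{2}\times\R$, $\sn{2}\times\R$, ${\rm Nil}_{3}$ and $\wt{{\rm SL}}_{2}(\R)$, the entire isometry group preserves the distinguished one-dimensional foliation (the metric-product $\R$-factor, the center, or the preimage of the center), so $\Pi$ itself preserves it and the foliation descends to $\Ma$ directly; no finite-index passage is needed. But ${\rm ISO}(\R^{3})=\R^{3}\rtimes{\rm O}(3)$ does not preserve any fixed line direction, and ${\rm ISO}(\sn{3})={\rm O}(4)$ does not preserve a Hopf fibration; in each case the stabilizer of the foliation is a proper Lie subgroup of strictly smaller dimension, hence \emph{not} of finite index, so ``passing to a finite-index subgroup of $\Pi$'' does not guarantee the foliation is preserved. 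Even where it does work, the argument only Seifert-fibers a finite cover of $\Ma$, after which you would still need the nontrivial descent theorem that a closed $3$-manifold finitely covered by a Seifert fibered space is itself Seifert fibered (also due to Scott). The standard way to close the gap is a case analysis: for $\R^{3}$, Bieberbach's theorem plus the classification of the ten closed flat $3$-manifolds; for $\sn{3}$, the classification of finite subgroups of ${\rm SO}(4)$ acting freely on $\sn{3}$ (equivalently, of spherical space forms), checking each is Seifert fibered. Your outline would be correct if these two geometries were handled separately along those lines.
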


\begin{theorem}[{P. Scott~\cite[Theorem~5.3~(i)]{scott}}]\label{thmSolg}
A closed 3-manifold
$\Ma$ admits a geometric structure modelled in ${\rm Sol}_3$
if and only if $\Ma$ is finitely covered by 
a {\em torus bundle}\footnote{Let $\bbT^2$ be the 2-torus and
let $f: \bbT^{2} \to \bbT^{2}$ 
be a orientation preserving homeomorphism.
The torus bundle generated by $f$ is the 3-manifold
$M(f) = (\bbT^2\times [0,1])/\sim$, where $\sim$ is the identification
$\bbT^2\times\{0\}\ni (x,0) \sim (f(x),1) \in \bbT^2\times\{1\}$.}
over $\sn1$ with hyperbolic identification map.
\end{theorem}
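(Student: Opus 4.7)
My plan is to pass from $\Ma$ to a discrete cocompact subgroup of $\textrm{Isom}(\textrm{Sol}_{3})$ and then analyze its structure through the natural fibration $\textrm{Sol}_{3} \to \R$. Recall that $\textrm{Sol}_{3} = \R^{2} \rtimes \R$, where the $\R$-factor acts on $\R^{2}$ by $\textrm{diag}(e^{t}, e^{-t})$, giving a short exact sequence $1 \to \R^{2} \to \textrm{Sol}_{3} \to \R \to 1$ in which $\R^{2}$ is a normal (abelian) subgroup corresponding to flat leaves. Equip $\textrm{Sol}_{3}$ with the left-invariant metric $g = e^{-2t} dx^{2} + e^{2t} dy^{2} + dt^{2}$, under which left-translations are isometries.

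For the implication that a hyperbolic torus bundle cover forces a $\textrm{Sol}_{3}$ structure, I would first show directly that a torus bundle $N = M(A)$ with $A \in SL(2,\Z)$ and $|\textrm{tr}(A)| > 2$ admits a $\textrm{Sol}_{3}$ structure. Diagonalize $A$ in its eigenbasis, writing $A = \textrm{diag}(\lambda, \lambda^{-1})$ with $\lambda > 1$ real; after rescaling the $t$-coordinate by $\log \lambda$, the map $(x, y, t) \mapsto (\lambda x, \lambda^{-1} y, t + 1)$ becomes an isometry of $(\R^{3}, g)$. Together with translations by the lattice $L \subset \R^{2}$ spanned by the eigenvectors of $A$ (normalized so that the integer lattice is $A$-invariant), these generate a free, discrete, cocompact action producing $N = \R^{3} / (L \rtimes_{A} \Z)$ with a Sol structure. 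When $\Ma$ is only finitely covered by such an $N$, I would argue that the deck transformations of $N \to \Ma$ act on $\pi_{1}(N)$ by outer automorphisms realized by isometries of $(\textrm{Sol}_{3}, g)$, using the explicit description of $\textrm{Isom}(\textrm{Sol}_{3})$ and its eight components; this passes the Sol structure down to $\Ma$.

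For the converse, I would write $\Ma = \textrm{Sol}_{3} / \Pi$ with $\Pi$ a discrete, cocompact, torsion-free subgroup of $\textrm{Isom}(\textrm{Sol}_{3})$, and after passing to a finite index subgroup assume $\Pi$ lies in the identity component, which acts by left translations. Let $p : \textrm{Sol}_{3} \to \R$ be the quotient by $\R^{2}$. Cocompactness of $\Pi$ forces $p(\Pi)$ to be a lattice $\alpha \Z \subset \R$ for some $\alpha > 0$, and the kernel $K = \Pi \cap \R^{2}$ to be a lattice in $\R^{2}$. Hence $\Pi$ fits in a split exact sequence $1 \to K \to \Pi \to \Z \to 1$. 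Choose a generator $g$ of $\Pi / K$ with $p(g) = \alpha$; conjugation by $g$ preserves $K \cong \Z^{2}$ and, viewed in $\R^{2}$, is given by $\textrm{diag}(e^{\alpha}, e^{-\alpha})$. Expressed in an integral basis of $K$, this becomes a matrix $A \in SL(2, \Z)$ with $\textrm{tr}(A) = e^{\alpha} + e^{-\alpha} > 2$, and $\Ma$ is diffeomorphic to the torus bundle $M(A)$, which is the desired hyperbolic torus bundle.

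The main obstacle I expect is the step in the converse showing that $K = \Pi \cap \R^{2}$ is \emph{cocompact} in $\R^{2}$. Discreteness is automatic, but cocompactness rests on structural facts about lattices in simply connected solvable Lie groups, namely that a cocompact lattice intersects the nilpotent radical in a cocompact lattice (essentially Mostow's structure theorem for solvmanifolds); equivalently, one must argue that $p(\Pi)$ is closed in $\R$ rather than a dense subgroup, which would otherwise obstruct the analysis. A secondary, subtler issue is in the forward direction: ensuring that every finite covering transformation $N \to \Ma$ is realized by an isometry of the Sol structure. This requires computing the outer automorphism group of $\pi_{1}(N) = \Z^{2} \rtimes_{A} \Z$ and checking that the relevant automorphisms extend to elements of $\textrm{Isom}(\textrm{Sol}_{3})$, up to possibly modifying the metric in its homothety class.
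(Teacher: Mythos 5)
The paper does not prove this theorem; it simply cites Scott's Theorem~5.3(i), so there is no in-paper argument to compare yours against. Evaluated on its own terms, your plan is the standard Lie-theoretic proof, and the converse direction (Sol structure implies finitely covered by a hyperbolic torus bundle) is essentially complete in outline: the cocompactness of $K = \Pi \cap \R^{2}$ is exactly the content of Mostow's theorem that a lattice in a simply connected solvable Lie group meets the nilradical in a lattice, and the ensuing computation showing that conjugation by a generator of $\Pi/K$ is a hyperbolic integer matrix is clean. The only small omission is the case where $\Pi$ is not contained in the identity component of $\mathrm{Isom}(\mathrm{Sol}_3)$ after all; you pass to a finite-index subgroup to fix this, which is exactly where the ``finitely covered'' in the statement comes from, so that is consistent.

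The gap in the forward direction is more serious than you acknowledge. Building the Sol structure on the hyperbolic torus bundle $N$ itself is routine as you describe. But descending it to $\Ma$ is not merely a matter of checking that each deck transformation, individually, induces an outer automorphism of $\pi_{1}(N) = \Z^{2} \rtimes_{A} \Z$ that is realized by an isometry of $\mathrm{Sol}_3$. You need to realize the whole short exact sequence $1 \to \pi_{1}(N) \to \pi_{1}(\Ma) \to G \to 1$, where $G = \mathrm{Deck}(N \to \Ma)$, as a short exact sequence of subgroups of $\mathrm{Isom}(\mathrm{Sol}_3)$ acting freely and cocompactly. That the relevant outer automorphisms lift to isometries normalizing $\pi_1(N)$ is the easier half; the extension itself could a priori carry an obstruction, and killing it requires either a cohomological argument exploiting the torsion-freeness of $\pi_{1}(\Ma)$ and the fact that the centralizer of $\pi_1(N)$ in $\mathrm{Sol}_3$ is trivial, or the topological route Scott actually takes, namely finiteness of $\mathrm{Out}(\pi_1(N))$ together with Waldhausen rigidity for closed Haken manifolds to upgrade ``there is a Sol manifold with the same $\pi_1$ as $\Ma$'' to ``$\Ma$ itself admits a Sol structure.'' As written, your proposal would establish the existence of a geometric model homotopy equivalent to $\Ma$, which is one theorem short of the claim.
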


Together, Theorems~\ref{classif} and~\ref{thmSolg} provide a complete
classification of a closed,
geometric 3-manifold $\Ma$ in terms of its topology: if $\Ma$
is a Seifert-fibered space, the underlying geometry is one of
the {\em Seifert fibered geometries} of Theorem~\ref{classif}.
If $\Ma$ is finitely covered by a torus bundle with hyperbolic
identification map, it is modelled by ${\rm Sol}_3$. Otherwise,
the underlying geometry is $\hn3$.

Another result that classify the underlying geometry of a 
geometric 3-manifold in terms of its topology (in some sense
extending Theorems~\ref{classif} and~\ref{thmSolg})
is the following Theorem~\ref{geompi1}, which can be found 
in~\cite[Section~1.8]{3mg} or as~\cite[Proposition~12.8.3]{martelli}.
It makes use of the following nomenclature:
if $P$ is a certain group property, we say that a 
group $G$ is {\em virtually $P$} if $G$ admits a finite index subgroup
that satisfies the property $P$.

\begin{theorem}\label{geompi1}
Let $\Ma$ be a closed, orientable
3-manifold modelled by a geometry $\mathbb{X}$.
Then, the following hold:

\begin{itemize}
\item
If $\pi_1(\Ma)$ is finite, then $\bbX = \sn{3}$. In this case, $\Ma$ is 
finitely covered by $\sn{3}$ and has a structure of
a Seifert fibered space with $\chi>0$ and $e\neq 0$.
\item Otherwise, if $\pi_{1}(\Ma) = \Z$ or $D_{\infty}$, then
$\bbX = \sn2\times\R$. In this case, $\Ma$ is
either $\sn2\times \sn1$ or $\R\mathbb{P}^3\#\R\mathbb{P}^3$,
so it has a structure of a Seifert fibered space with 
$\chi >0$ and $e = 0$.
\item Otherwise, if $\pi_{1}(\Ma)$ is virtually $\Z^{3}$, then
$\bbX = \R^{3}$. In this case, $\Ma$ is finitely covered by 
$T^3=\sn1\times\sn1\times\sn1$ and has a structure of a Seifert fibered
space with $\chi = 0$ and $e = 0$.
\item Otherwise, if $\pi_{1}(\Ma)$ is virtually nilpotent, then 
$\bbX = {\rm Nil}_{3}$. In this case, $\Ma$ is finitely covered by
a torus bundle with nilpotent monodromy, and has a Seifert
fibered structure with $\chi=0$ and $e \neq 0$.
\item Otherwise, if $\pi_{1}(\Ma)$ is solvable, then 
$\bbX = {\rm Sol}_{3}$.
In this case, $\Ma$ (or a double cover of $\Ma$) is a torus bundle with
Anosov monodromy and $\Ma$ does not admit a Seifert fibered structure.
\item Otherwise, if $\pi_{1}(\Ma)$ is virtually a product 
$\Z \times F$, where $F$ is noncyclic and free, then $\bbX = \hn2\times\R$. 
In this case, $\Ma$ is finitely covered by $\S\times \sn1$, $\S$ a 
surface with $\chi(\Sigma)<0$ and $\Ma$ admits a Seifert fibered 
structure with $\chi<0$ and $e = 0$.
\item Otherwise, if $\pi_{1}(\Ma)$ is a nonseparable extension 
of a noncyclic free group $F$ by $\Z$, then
$\bbX= \widetilde{{\rm SL}}(2,\R)$. In this case,
$\Ma$ is finitely covered by an $\sn1$ bundle over a surface 
$\Sigma$ with $\chi(\Sigma) <0$ and $\Ma$ admits a Seifert
fibered structure with $\chi <0$ and $e\neq 0$.
\item Otherwise, then $\bbX = \hn3$. In this case, $\Ma$ is atoroidal
and does not admit a Seifert fibered structure.
\end{itemize}
\end{theorem}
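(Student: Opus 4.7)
The plan is to proceed case-by-case through the eight Thurston geometries of Theorem~\ref{8geom}, reading off the properties of $\pi_1(\Ma)$, the Seifert structure, and the invariants $\chi,e$ directly from the Lie-theoretic structure of ${\rm ISO}(\bbX)$. The setup is that if $\Ma$ is closed and modelled on $\bbX$, then $\Ma \cong \bbX/\Pi$, where $\Pi \cong \pi_1(\Ma)$ is a discrete, cocompact, torsion-free subgroup of ${\rm ISO}(\bbX)$. Once the algebraic structure of $\Pi$ is understood in each case, the Seifert data follow from Theorem~\ref{classif} for the six Seifert geometries and Theorem~\ref{thmSolg} for ${\rm Sol}_3$; the $\hn3$ case is identified by elimination. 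Because the eight conditions on $\pi_1$ in the statement are mutually exclusive, it suffices to prove one implication in each case.

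First I would dispatch the two round geometries. For $\bbX = \sn3$, compactness of the universal cover forces $\Pi$ to be finite, and $\Ma$ is finitely covered by $\sn3$; conversely, if $\pi_1(\Ma)$ is finite, the universal cover is a closed, simply connected 3-manifold, which by the Poincaré conjecture (a corollary of Theorem~\ref{thmPergeom}) is $\sn3$. For $\bbX = \sn2\times\R$, the isometry group is ${\rm O}(3)\times(\R\rtimes \Z_2)$; a direct inspection of torsion-free cocompact actions yields two orientable quotients, $\sn2\times\sn1$ (with $\pi_1 = \Z$) and $\R\mathbb{P}^3\#\R\mathbb{P}^3$ (with $\pi_1 = D_\infty$). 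In both round cases, Theorem~\ref{classif} pins down the Seifert structure from $\chi>0$ together with the appropriate $e$.

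Next I would handle the three solvable geometries $\R^3,\,{\rm Nil}_3,\,{\rm Sol}_3$ via the classical theory of crystallographic and polycyclic groups. Bieberbach's theorems give that cocompact lattices in ${\rm ISO}(\R^3)$ are virtually $\Z^3$ and the quotient is finitely covered by $\bbT^3$; lattices in ${\rm Nil}_3$ are virtually nilpotent of class two and not virtually abelian; lattices in ${\rm Sol}_3$ are virtually polycyclic and solvable but not virtually nilpotent, and by Theorem~\ref{thmSolg} the quotient is finitely covered by a torus bundle over $\sn1$ with Anosov monodromy. The Seifert data for $\R^3$ and ${\rm Nil}_3$ come from Theorem~\ref{classif} with $\chi=0$; the nonexistence of a Seifert fibration in the ${\rm Sol}_3$ case is the subtler point: by Epstein's theorem it suffices to rule out a foliation of $\Ma$ by circles, which follows from the fact that a solvable but non-nilpotent lattice in ${\rm ISO}({\rm Sol}_3)$ admits no infinite cyclic normal subgroup, whereas the subgroup generated by a regular fiber of a Seifert fibration always is normal.

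Finally I would treat the three remaining geometries $\hn2\times\R,\,\wt{{\rm SL}}(2,\R),\,\hn3$. For $\hn2\times\R$, the $\R$-translation factor yields a central $\Z \subset \pi_1(\Ma)$ whose quotient is a uniform lattice in ${\rm ISO}(\hn2)$, hence virtually a noncyclic free group $F$; since the central extension splits (the geometry is a product), $\pi_1(\Ma)$ is virtually $\Z \times F$, and $\Ma$ is a circle bundle over a hyperbolic surface with $\chi<0$, $e=0$ by Theorem~\ref{classif}. For $\wt{{\rm SL}}(2,\R)$, the same argument produces a central $\Z$ with free quotient, but now the extension is nonsplit, since $\wt{{\rm SL}}(2,\R)\to {\rm PSL}(2,\R)$ is a nontrivial $\Z$-cover, and the associated circle bundle has $e\neq 0$. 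For $\hn3$, every abelian subgroup of a torsion-free cocompact Kleinian group is cyclic, so $\pi_1(\Ma)$ contains no $\Z^2$ and hence $\Ma$ is atoroidal and admits no Seifert fibration. The main obstacle I anticipate is exactly the algebraic distinction between $\hn2\times\R$ and $\wt{{\rm SL}}(2,\R)$ (and analogously between $\R^3$ and ${\rm Nil}_3$): both pairs share the same coarse group-theoretic shape (a central $\Z$-extension of a free group, respectively of $\Z^2$), and separating them requires detecting whether the extension splits, which corresponds via item~\ref{it4b} of Proposition~\ref{propClass} precisely to the vanishing of the Euler number $e$ of the associated circle bundle.
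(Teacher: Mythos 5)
The paper does not prove Theorem~\ref{geompi1}; it only cites it from Aschenbrenner--Friedl--Wilton~\cite{3mg} and Martelli~\cite{martelli}, so there is no proof in the paper against which to compare. Your geometry-by-geometry plan, reading off the algebraic shape of a discrete, cocompact, torsion-free subgroup of ${\rm ISO}(\bbX)$ for each of the eight Thurston geometries and then invoking Theorems~\ref{classif}, \ref{thmSolg} and mutual exclusivity to close the argument, is precisely the route taken in those references, and most of the case-by-case discussion is sound.

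There is, however, a genuine mathematical error in your treatment of the $\hn2\times\R$ case --- one that also appears in the statement of Theorem~\ref{geompi1} as printed, but which you should have flagged rather than reproduced. You write that the quotient of $\pi_1(\Ma)$ by the central $\Z$ is a uniform lattice in ${\rm ISO}(\hn2)$ and ``hence virtually a noncyclic free group $F$.'' This is false: a cocompact Fuchsian group is virtually the fundamental group of a \emph{closed} hyperbolic surface, and $\pi_1(\Sigma_g)$ for $g\geq 2$ is a one-relator group, not a free one. The claim cannot be patched, because if $\pi_1(\Ma)$ were virtually $\Z\times F$ with $F$ free and nontrivial, it would have cohomological dimension $2$, whereas $\Ma$ is a closed aspherical $3$-manifold (the model space $\hn2\times\R$ is contractible), so $\pi_1(\Ma)$ has cohomological dimension $3$. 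The correct hypothesis in the $\hn2\times\R$ item is that $\pi_1(\Ma)$ is virtually $\Z\times\pi_1(\Sigma)$ for a closed hyperbolic surface $\Sigma$ (equivalently, virtually $\Z$ times a cocompact Fuchsian group); similarly, in the $\wt{{\rm SL}}(2,\R)$ item the group $F$ must be a closed surface group, not a free group --- the word ``free'' in the printed statement is almost certainly a slip for ``Fuchsian.'' Aside from this, your separation of $\hn2\times\R$ from $\wt{{\rm SL}}(2,\R)$ by the (non)vanishing of $e$ via item~\ref{it4b} of Proposition~\ref{propClass} is the right mechanism, and your ${\rm Sol}_3$ argument (no infinite cyclic normal subgroup in a lattice with Anosov monodromy, hence no normal fiber subgroup and, by Epstein's theorem, no Seifert fibration) matches the standard reasoning.
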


An important observation is that when we are classifying the underlying
geometry of a geometric 3-manifold, the hyperbolic case is always the
{\em otherwise} case. In fact, the two most difficult steps into
proving geometrization in its full generality were the 
{\em elliptization conjecture} and the {\em hyperbolization
conjecture}, that dealt with the respective $\sn3$ and $\hn3$ 
geometries. 

\begin{theorem}[Elliptization Theorem]\label{ellipthm}
Let $\Ma$ be a closed, orientable 3-manifold with 
finite fundamental group. Then $\Ma$ is {\em elliptic}, 
i.e., $\Ma$ admits a geometric structure
modelled by the 3-sphere $\sn3$.
\end{theorem}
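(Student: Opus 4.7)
The plan is to reduce the Elliptization Theorem to Perelman's full Geometrization Theorem (Theorem~\ref{thmPergeom}) together with the fundamental-group classification of closed geometric 3-manifolds (Theorem~\ref{geompi1}). Assuming those two results, the argument is essentially a process of elimination.

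First, I would reduce to the irreducible case. By the Kneser--Milnor theorem (Theorem~\ref{thmKnMi}), the closed orientable manifold $\Ma$ decomposes as a connected sum $\Ma \cong \Ma_1 \# \Ma_2 \# \cdots \# \Ma_k$ of prime factors, and Van Kampen's theorem gives $\pi_1(\Ma) \cong \pi_1(\Ma_1) * \cdots * \pi_1(\Ma_k)$. A free product of groups is finite if and only if at most one factor is nontrivial, so all but one of the $\Ma_i$ must have trivial fundamental group. By the Poincar\'e Conjecture (itself a corollary of Theorem~\ref{thmPergeom}), the simply connected summands are copies of $\sn3$, which are neutral for connected sum; after suppressing them, $\Ma$ is prime. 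Since the unique closed orientable prime 3-manifold that is not irreducible is $\sn2 \times \sn1$, whose fundamental group $\Z$ is infinite, the reduced $\Ma$ is irreducible.

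Next, I would apply Theorem~\ref{thmPergeom} directly to $\Ma$, which is closed and therefore distinct from the three excluded manifolds $\bbD\times\sn1$, $\bbT^2\times[0,1]$ and $\bbK\wt{\times}[0,1]$. The theorem provides a (possibly empty) collection of disjoint embedded incompressible tori and Klein bottles along which $\Ma$ is cut into geometric pieces. The key observation is that any two-sided incompressible surface in a 3-manifold has an injective fundamental group (by the Loop Theorem), and both the torus and the Klein bottle have fundamental groups containing $\Z^2$. Since $\pi_1(\Ma)$ is finite, no such surface can embed in $\Ma$, so the decomposing collection is empty and $\Ma$ itself is geometric, modelled on one of Thurston's eight geometries.

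Finally, I would appeal to Theorem~\ref{geompi1} to identify the geometry. Among the eight cases listed there, only the first is compatible with $\pi_1(\Ma)$ finite; the others force $\pi_1(\Ma)$ to contain $\Z$, $\Z^2$, $\Z^3$, a noncyclic free subgroup, or an infinite solvable subgroup, all of which are excluded here. Hence $\bbX = \sn3$, as required. The main obstacle in this plan is not in the formal chain of deductions but in the invoked results: the entire analytic substance lies inside Theorem~\ref{thmPergeom}, in particular Perelman's finite-time extinction argument for Ricci flow with surgery on manifolds with finite fundamental group, which is precisely what drives the identification of the universal cover with the round 3-sphere.
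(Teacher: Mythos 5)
Your proposal is correct and follows essentially the same scheme as the paper's sketch: reduce to the irreducible case, observe that the geometric (JSJ) decomposition is trivial because the finite group $\pi_1(\Ma)$ contains no $\Z\times\Z$, so $\Ma$ itself is geometric by Theorem~\ref{thmPergeom}, and then eliminate the non-spherical geometries. Two small differences are worth noting. First, the paper (which actually proves Theorems~\ref{ellipthm} and~\ref{hypthm} together) simply asserts that a closed orientable $3$-manifold with finite $\pi_1$ is irreducible, whereas you justify it via Kneser--Milnor, Van Kampen, and the Poincar\'e Conjecture; since the paper states Poincar\'e as a corollary of Theorem~\ref{ellipthm}, your invocation of it could look circular, but you are right that it can be obtained directly from Theorem~\ref{thmPergeom} by applying it to each simply connected prime factor (each such factor, being irreducible with trivial $\pi_1$, is geometric with geometry $\sn3$), and it would be cleaner to say exactly that rather than to cite Poincar\'e by name. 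Second, for the final identification you appeal to the dichotomy table of Theorem~\ref{geompi1}, while the paper argues directly that the five geometries $\wt{{\rm SL}}(2,\R),\,\R^3,\,\hn2\times\R,\,{\rm Nil}_3,\,{\rm Sol}_3$ produce $\Z\times\Z$ subgroups and that $\hn3$-quotients have infinite $\pi_1$; your route via Theorem~\ref{geompi1} is arguably tidier, since it also handles $\sn2\times\R$ explicitly, a case the paper's $\Z\times\Z$ criterion does not cover (its closed quotients $\sn2\times\sn1$ and $\R\mathbb{P}^3\#\R\mathbb{P}^3$ have no $\Z\times\Z$ but are excluded here because their fundamental groups $\Z$ and $D_\infty$ are infinite).
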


\begin{theorem}[Hyperbolization Theorem]\label{hypthm}
Let $\Ma$ be a compact, orientable and irreducible 3-manifold
with (possibly empty) toroidal boundary,
$M\neq \bbD\times\sn1,
\,M\neq \bbT^2\times[0,1],\,M\neq \bbK\wt{\times}[0,1]$. 
If $\Ma$ is atoroidal and
$\pi_1(\Ma)$ is infinite, then $\Ma$ is {\em hyperbolic}, i.e.,
$\Ma$ admits a geometric structure of finite volume
modelled by $\hn3$.
\end{theorem}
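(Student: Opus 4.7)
The plan is to split the proof according to whether $\Ma$ is Haken or not. If $\partial \Ma\neq\emptyset$, then under the stated hypotheses every boundary torus of $\Ma$ is incompressible (by irreducibility and the exclusion of $\bbT^2\times[0,1]$), so $\Ma$ is automatically Haken. If $\Ma$ is closed, we must separately consider the Haken and non-Haken subcases: the Haken case is handled by Thurston's original hyperbolization, which is the content of Theorem~\ref{thmthgeom}, while the closed non-Haken case relies on Perelman's Ricci flow with surgery, Theorem~\ref{thmPergeom}.

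For the Haken case, the strategy is an induction along a Haken hierarchy $\Ma=\Ma_0\supset \Ma_1\supset\cdots\supset \Ma_N$, a finite chain of cuts along two-sided incompressible surfaces $S_1,\ldots,S_N$, ending in a disjoint union of 3-balls. The base case is trivial: a 3-ball admits a geometrically finite hyperbolic structure. The inductive step reverses one cut: assuming $\Ma_{i+1}$ admits a geometrically finite hyperbolic structure on $\mathrm{int}(\Ma_{i+1})$ of finite volume whose conformal structure on the lifts of $S_{i+1}$ matches under the gluing identification, construct such a structure on $\Ma_i$. Throughout, one must verify that cutting preserves irreducibility and atoroidality, and that at each stage the surface $S_i$ is either a fiber in a fibration over $\sn1$ or non-fibered, as the two subcases require different gluing theorems.

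The main obstacle is precisely this inductive gluing step, which is the heart of Thurston's theorem. It is accomplished via the \emph{skinning map} $\sigma$ on the Teichm\"uller space of $S_{i+1}$: hyperbolic structures on $\Ma_i$ restricting to a given boundary structure correspond to fixed points of $\tau\circ\sigma$, where $\tau$ encodes the gluing identification on Teichm\"uller space. Proving the existence of such a fixed point requires Thurston's \emph{double limit theorem}, the compactness properties of quasi-Fuchsian space, and a careful case analysis to rule out diverging orbits. The fibered case needs a separate argument via the skinning map's behavior on pseudo-Anosov monodromies. This is deeply nontrivial and constitutes the bulk of Thurston's original argument.

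For the closed non-Haken case, I would follow Perelman's approach: evolve an arbitrary metric $g_0$ on $\Ma$ under the (normalized) Ricci flow, performing surgeries at each finite-time singularity according to Perelman's canonical neighborhood theorem and $\kappa$-noncollapsing estimates. After finitely many surgeries one obtains a well-defined long-time flow; irreducibility and infiniteness of $\pi_1(\Ma)$ exclude finite-time extinction. At large times, Perelman's thick-thin decomposition applies: the thick part Gromov--Hausdorff converges to a finite-volume complete hyperbolic manifold, while the thin part collapses to a graph manifold whose boundary in $\Ma$ consists of incompressible tori. Atoroidality of $\Ma$ forces such tori to be boundary-parallel, which together with $\Ma$ being closed forces the thin part to be empty. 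Hence $\Ma$ itself is hyperbolic. The hardest step here is the analytic machinery of $\kappa$-solutions, singularity classification, and the collapsing theorem, which is orthogonal to the topological methods of the Haken case and which I would invoke as a black box.
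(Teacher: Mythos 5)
Your proposal is a genuinely different route from the paper's, and it is worth stating the contrast precisely. The paper does not re-prove Thurston's or Perelman's work here; instead it deduces the Hyperbolization Theorem as a short corollary of the already-stated Geometrization Theorem (Theorem~\ref{thmPergeom}). Concretely: under the hypotheses, atoroidality forces the collection of cutting tori and Klein bottles in the geometric decomposition of $\Ma$ to be empty (and the excluded manifolds $\bbD\times\sn1$, $\bbT^2\times[0,1]$, $\bbK\wt\times[0,1]$ rule out degeneracies), so $\Ma$ is itself geometric; the six ``Seifert'' geometries and ${\rm Sol}_3$ are then ruled out by the presence of $\Z\times\Z$ subgroups (together with the classification in Theorem~\ref{geompi1}), and $\sn3$ is ruled out by $\pi_1(\Ma)$ being infinite, leaving $\hn3$. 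This is a five-line argument that treats geometrization as a black box.

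Your proposal, by contrast, attempts to sketch the proof of geometrization itself: Thurston's hierarchy argument with skinning maps and the double limit theorem for the Haken case, and Perelman's Ricci flow with surgery plus the thick--thin decomposition for the closed non-Haken case. As a summary of the literature this is broadly accurate, and your observation that nonempty toroidal boundary together with irreducibility and the exclusion of $\bbD\times\sn1$ makes the manifold Haken is correct and is exactly why Thurston's work covers the bounded case. But two points deserve caution. First, the intermediate structures in the hierarchy argument are geometrically finite but generally of infinite volume; one only recovers finite volume at the final gluing, so ``geometrically finite hyperbolic structure of finite volume'' is self-contradictory as written for the inductive hypothesis. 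Second, in the closed non-Haken case, ``atoroidality forces the thin part to be empty'' is too quick: when $\Ma$ is closed and atoroidal the thick--thin boundary tori cannot exist, so $\Ma$ is entirely thick or entirely thin; you still need to rule out that $\Ma$ is entirely thin, which requires showing that a closed atoroidal graph manifold with infinite $\pi_1$ cannot occur (this comes down to the analysis of small Seifert fibered spaces, cf.\ Remark~\ref{smallseifert}). In short, the paper's approach buys brevity and logical economy by invoking Theorem~\ref{thmPergeom}; yours buys independence from that black box at the cost of invoking far deeper machinery, and would need the two repairs above to be airtight even as a sketch.
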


As already mentioned, Thurston proved the geometrization in the
class of Haken manifolds, and the most crucial step
in his proof
was to prove Theorem~\ref{hypthm} for this class of manifolds.
Next, we will show how both
the hyperbolization and elliptization theorems follow from
the geometrization theorem.

\begin{proof}[Sketch of the proof of Theorems~\ref{ellipthm} 
and~\ref{hypthm}]
Let $\Ma$ be a compact, orientable 3-manifold 
that satisfies either the hypothesis of Theorem~\ref{ellipthm} or
of Theorem~\ref{hypthm}. Note that if $\Ma$ is closed and
$\pi_1(\Ma)$ is finite, $\Ma$ is irreducible, and in both cases
there is no $\Z\times\Z$ subgroup
in $\pi_1(\Ma)$, so
the JSJ decomposition of $\Ma$, given by 
Theorem~\ref{jsj}, must be trivial. 
Hence, Theorem~\ref{thmPergeom} gives that
$\Ma$ is itself geometric and admits a model geometry $\bbE$. 
The fact that $\pi_1(\Ma)$ does not contain any $\Z\times\Z$ subgroup
implies directly that $\bbE$ is not one of $\wt{{\rm SL}}(2,\R),\,
\R^3,\,\hn2\times\R,\,{\rm Nil}_3$ and ${\rm Sol}_3$ (whose quotients
always have such a subgroup). Hence, either $\bbE = \sn3$ or 
$\bbE = \hn3$. Since a quotient of $\sn3$ has finite fundamental
group while no (nontrivial) quotient of $\hn3$ does so,
this proves both theorems.
\end{proof}

At this point, it is almost irrelevant
to present (or prove) the next statement. However, due
to its beautiful and old history, we chose to do so.

\begin{corollary} The Poincaré conjecture is true.
\end{corollary}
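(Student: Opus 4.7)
The plan is to deduce the Poincaré conjecture as an immediate consequence of the Elliptization Theorem (Theorem~\ref{ellipthm}), which itself follows from Perelman's geometrization theorem. Recall that the Poincaré conjecture asserts that a closed, simply connected 3-manifold $\Ma$ is homeomorphic (equivalently, in dimension 3, diffeomorphic) to $\sn3$. So my first task is to verify that such an $\Ma$ falls within the hypotheses of Theorem~\ref{ellipthm}: $\Ma$ is closed by assumption, it is orientable since every simply connected manifold is orientable (for instance, because the orientation double cover must be trivial when $\pi_1(\Ma)=1$), and its fundamental group $\pi_1(\Ma)=\{1\}$ is trivially finite.

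Applying Theorem~\ref{ellipthm}, $\Ma$ admits a geometric structure modelled by $\sn3$. By Definition of a geometric structure, this means there is a diffeomorphism between $\Ma$ and $\sn3/\Pi$, where $\Pi$ is a discrete subgroup of $\mathrm{ISO}(\sn3)$ acting freely on $\sn3$. Because $\sn3$ is simply connected, the quotient map $\sn3 \to \sn3/\Pi \cong \Ma$ is a universal covering, and the deck transformation group $\Pi$ is isomorphic to $\pi_1(\Ma)$.

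Since $\pi_1(\Ma)$ is trivial by hypothesis, $\Pi$ must be the trivial group, and therefore the covering map becomes a diffeomorphism $\sn3 \to \Ma$. This proves that $\Ma$ is diffeomorphic to $\sn3$, which is the statement of the Poincaré conjecture. There is essentially no obstacle in this final step: all of the difficulty was absorbed into Theorem~\ref{thmPergeom} and its consequence Theorem~\ref{ellipthm}, whose sketch was already provided. The only minor points to check carefully are that simple connectedness implies orientability in dimension three, and that a discrete group acting freely on a simply connected manifold realizes the fundamental group of the quotient; both are standard facts from covering space theory and require no further analysis.
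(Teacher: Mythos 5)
Your proof is correct and follows essentially the same route as the paper: invoke the Elliptization Theorem to conclude that $\Ma$ is a quotient of $\sn3$ by a free discrete action, then observe that triviality of $\pi_1(\Ma)$ forces the acting group to be trivial, so the covering map is a diffeomorphism. The only difference is that you explicitly note that simple connectedness implies orientability, whereas the paper silently includes orientability among the hypotheses; this is a small but genuine clarification, since the Poincaré conjecture does not assume orientability.
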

\begin{proof}
Let $\Ma$ be a simply connected, closed, orientable 3-manifold.
Then, Theorem~\ref{ellipthm} implies that the total space of its
universal covering is $\sn3$. But since $\pi_1(\Ma)$ is trivial,
the covering map $\pi\colon \sn3\to\Ma$ must be a diffeomorphism.
\end{proof}

To finish this section, we note that the Geometrization
Theorem has several deep applications not only to topology but
also to differential geometry, and next we present just a few of them,
in order to illustrate the advances it made possible.
First, we observe that geometrization allowed for the solution
of the so called {\em homeomorphism problem}, which asks for an 
algorithm to decide if two given compact
3-manifolds are homeomorphic, for details 
see~\cite[Section 1.4.1]{BBetal}. Also the complete proof
of the Poincaré conjecture, together with previous 
results by Gromov-Lawson and Schoen-Yau, made possible to obtain the
topological 
classification of closed 3-manifolds that admit a metric of
positive scalar curvature (see, for instance,~\cite{CaLi}
or~\cite{KleiLott}). 
Moreover, it also was used in the classification
of 3-manifolds with non-negative Ricci curvature~\cite{Liu}.

\section{Hyperbolization of noncompact 3-manifolds}\label{sechyperb}

As seen in Section~\ref{sec3manf}, the richest topology of all
is the one of the hyperbolic 3-manifolds, and a great
part in the work of Thurston was to obtain a deep understanding of
the topology of such manifolds. In this section, we will focus
our attention to the hyperbolization of noncompact 3-manifolds.
We will present an algorithmic characterization,
equivalent to the hyperbolization theorem, that allows us to
decide whether the interior of a compact, orientable 3-manifold
with nonempty toroidal boundary admits a complete hyperbolic metric
of finite volume. We will also present a few applications 
of this criterion. We would like to point out that 
several recent developments in the theory of hyperbolic 3-manifolds
were achieved by the works of Agol, Kahn, Markovic, Wise and many
others and we suggest the book~\cite{3mg} by Aschenbrenner, Friedl and
Wilton and its extensive list
of references for aspects of hyperbolic 3-manifolds not covered in 
this manuscript. We also suggest the book~\cite{BPHyp} by Benedetti and
Petronio for several classical results for hyperbolic 3-manifolds.

\subsection{Thurston's hyperbolicity criterion}

Although presently we know that among the geometries
appearing on the geometric decomposition of closed 3-manifolds, the
hyperbolic geometry is the most prominent,
until the work of Thurston very few explicit examples of 
noncompact hyperbolic 3-manifolds of finite volume were 
known.
In the words of Thurston~\cite{thurletter}, we quote:

\begin{center}{\em
To most
topologists at the time, hyperbolic geometry was 
an arcane side branch of mathematics, although there were other 
groups of mathematicians such as differential
geometers who did understand it from certain points of view.}
\end{center}

We start this section
by stating a celebrated result in knot theory, proved by Thurston,
that provided infinitely many
examples of hyperbolic 3-manifolds as complements of knots in $\sn3$ 
(recall that a {\em knot} in a manifold $P$ is 
the image of an embedding $f\colon \sn1\to P$).
Along this section, by a {\em hyperbolic 3-manifold} we mean
a noncompact, orientable 3-manifold endowed with 
a complete hyperbolic metric of finite volume.
Also, when $K$ is a subset of a 3-manifold, $N(K)$
will denote a small, open, regular tubular neighborhood around $K$
and $\ol{N}(K)$ will denote its closure, when these concepts 
make sense.

\begin{theorem}[Classification of knots in $\sn3$]\label{knotss3}
Let $K$ be a knot
in $\sn3$. Then, one of the following holds (see 
Figure~\ref{figknots}):
\begin{itemize}
\item $K$ is a {\em torus knot}, i.e., there exists an ambient isotopy
that maps $K$ to the boundary of a standard\footnote{Using 
the model $\sn3\subset\R^4$, a {\em standard} torus (or 
a unknotted torus) is 
any torus that is isotopic to $\{(x,y,z,w)\mid x^2+y^2=\frac12,\,z^2+y^2=\frac12\}\sim \sn1\times\sn1$.} 
solid torus in $\sn3$;
\item $K$ is a {\em satellite knot}, i.e., there exists a knot
$\wt{K}$ in $\sn3$ such that $K$ is contained in a regular, tubular 
neighborhood around $\wt{K}$, and $K$ is not isotopic to $\wt{K}$;
\item $K$ is a {\em hyperbolic knot}, i.e., the open manifold
$\sn3\setminus \ol{N}(K)$
admits a complete hyperbolic metric of finite volume. 
\end{itemize}
\begin{figure} 	
\centering
	\includegraphics[scale=0.47]{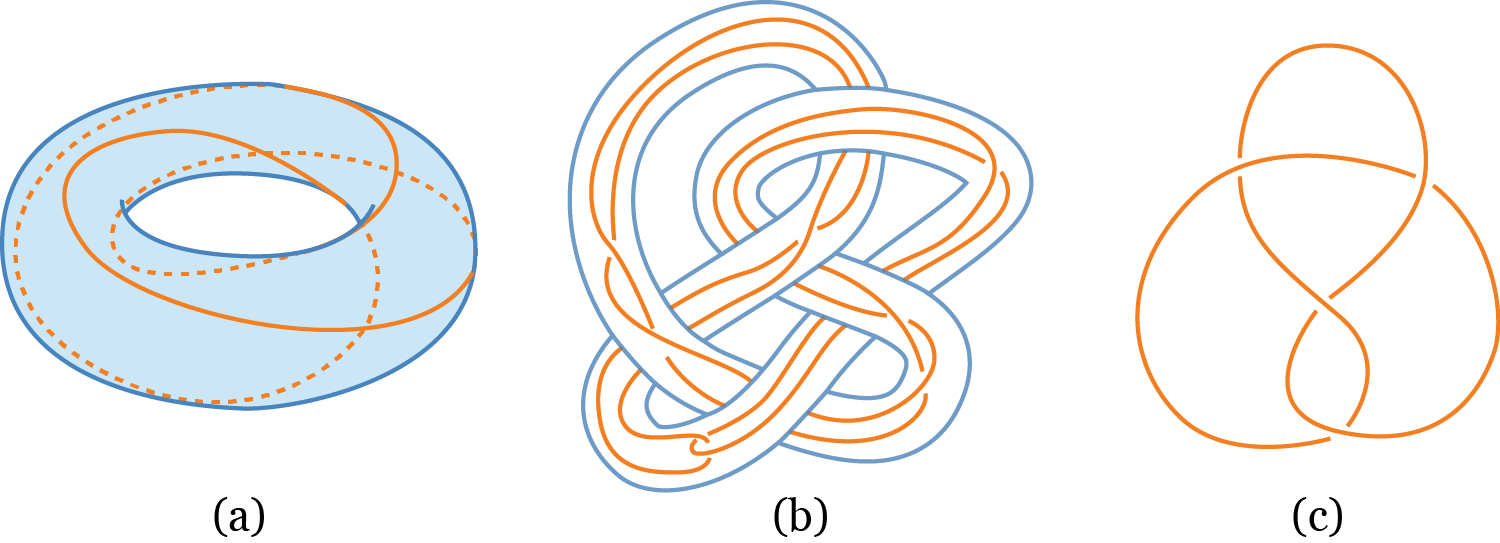}
\caption{(a) The torus knot $(2,3)$; (b) A satellite knot of the 
knot $5_{2}$; (c) An example of a hyperbolic knot, $4_{1}$\label{figknots}}
\end{figure}

\end{theorem}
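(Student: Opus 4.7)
The plan is to apply the JSJ decomposition (Theorem~\ref{jsj}) together with Thurston's hyperbolization theorem (Theorem~\ref{hypthm}) to the knot exterior $M := \sn3 \setminus N(K)$. First I would verify the standard fact that $M$ is compact, orientable, irreducible, and has a single torus boundary component: irreducibility follows because any embedded 2-sphere $S\subset M$ separates $\sn3$ into two balls, and the connected knot $K$ lies entirely in one of them, so the other ball lies in $M$ and is bounded by $S$.

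Next I would apply Theorem~\ref{jsj} to obtain a minimal (possibly empty) collection of essential tori $T_1,\ldots,T_n$ decomposing $M$ into atoroidal or Seifert fibered components, and split the argument according to whether $n=0$ or $n\geq 1$. Suppose first that $n=0$. If $M$ is Seifert fibered, I would invoke the classical theorem (essentially due to Moser) that Seifert fibered knot exteriors in $\sn3$ are exactly the exteriors of torus knots, and conclude that $K$ is a torus knot. If $M$ is atoroidal, I would apply Theorem~\ref{hypthm}: since $\pi_1(M)$ contains a meridian it is infinite, so $M$ is either hyperbolic (hence $K$ is a hyperbolic knot) or diffeomorphic to one of the exceptional manifolds $\bbD\times\sn1$, $\bbT^2\times[0,1]$, $\bbK\wt{\times}[0,1]$. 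The latter two are incompatible with being a knot exterior in $\sn3$: $\bbT^2\times[0,1]$ has two boundary tori, whereas $H_1(\bbK\wt{\times}[0,1])=\Z\oplus\Z/2$ contradicts the well-known fact, via Alexander duality, that every knot exterior in $\sn3$ has $H_1=\Z$. Thus the only exceptional possibility that actually arises is $M=\bbD\times\sn1$, in which $K$ is the unknot, a torus knot.

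Now assume $n\geq 1$ and let $T$ be one of the essential JSJ tori. Viewed as an embedded torus in $\sn3$, the classical Alexander torus theorem gives that $T$ bounds a solid torus on at least one side. I would rule out the possibility that both sides of $T$ in $\sn3$ are solid tori: if so, since $K\cap T=\emptyset$, the meridian disk of whichever side does not contain $K$ would lie entirely in $M$ and compress $T$, contradicting its incompressibility. Hence exactly one side, call it $V$, is a solid torus, and the other $W$ is not. Applying the same compression-disk argument to $V$ itself forces $K\subset\mathrm{int}(V)$. Taking $\wt{K}$ to be the core of $V$, one has $K\subset N(\wt{K})$; moreover $K$ cannot be isotopic to $\wt{K}$ within $V$, for otherwise the region $V\setminus N(K)$ would be a collar $\bbT^2\times[0,1]$, making $T=\partial V$ boundary-parallel in $M$ and contradicting the essentiality of $T$ in the minimal JSJ family. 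Therefore $K$ is a satellite knot with companion $\wt{K}$, completing the case analysis.

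The main obstacle is that two classical inputs sit outside the excerpt: Alexander's theorem that every embedded torus in $\sn3$ bounds a solid torus on at least one side, and the identification of Seifert fibered knot exteriors in $\sn3$ as exteriors of torus knots. Each conceals a fair amount of 3-manifold topology, but both are standard in knot theory; modulo them, the proof is a clean combination of JSJ, hyperbolization, and a short compression-disk analysis at the essential torus.
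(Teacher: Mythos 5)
Your argument is correct. The paper does not actually write out a proof: after stating Theorem~\ref{HypCrit} it only remarks that Theorem~\ref{knotss3} follows ``with a little effort,'' checking just the easy direction --- that torus knot exteriors contain essential disks or annuli and satellite knot exteriors contain essential tori --- while leaving unwritten the converse, namely that a non-simple knot exterior must be a torus knot or satellite exterior. You instead bypass Theorem~\ref{HypCrit} and apply the JSJ decomposition (Theorem~\ref{jsj}) and the hyperbolization theorem (Theorem~\ref{hypthm}) directly, which is the standard textbook route. The two approaches conceal essentially the same classical inputs under different labels: you invoke Moser's theorem (Seifert-fibered knot exteriors in $\sn3$ are exactly torus knot exteriors) and Alexander's torus theorem, whereas to complete the paper's sketch one would need the parallel statements that a knot exterior admitting an essential annulus is a torus knot exterior and one admitting an essential torus is a satellite exterior. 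What your route buys is an explicit treatment of the hard direction: the compression-disk analysis at the JSJ torus pins down the solid torus $V$ with $K\subset\mathrm{int}(V)$, shows $V$ is knotted since its complement $W$ is not a solid torus (hence the companion $\wt{K}$ is nontrivial), and rules out $K$ being a core of $V$ precisely because $T$ would otherwise be boundary-parallel, contradicting minimality of the JSJ family; and the $H_1=\Z$ constraint from Alexander duality cleanly excludes $\bbK\wt{\times}[0,1]$ among the exceptional manifolds of Theorem~\ref{hypthm}. Both versions are valid; yours is a more complete write-up along a parallel line.
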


\begin{remark}\label{remarkColin}
The first known
example of a noncompact, complete 
hyperbolic 3-manifold of finite volume was given by 
H. Gieseking~\cite{Giese} in 1912. 
Such a manifold is actually nonorientable and 
C. Adams~\cite{Ad1} proved that 
it is the noncompact hyperbolic
3-manifold with the smallest possible volume\footnote{Among several results about hyperbolic 3-manifolds
of finite volume we are omitting in this article is the {\em 
Mostow-Prasad rigidity theorem}, which states that the geometry
of hyperbolic 3-manifolds is rigid. In contrast with the case of 
hyperbolic surfaces, where the same topology may admit infinitely many
hyperbolic metrics, in dimension three
any diffeomorphism between two hyperbolic 3-manifolds
is isotopic to an isometry. Thus, the {\em hyperbolic volume} of
a given hyperbolic 3-manifold is a well-defined topological
invariant.}
$\mathcal{V}\simeq 1.0149$, where $\mathcal{V}$ is 
the volume of the ideal regular tetrahedron 
in $\hn3$. 
The first description of a hyperbolic 3-manifold 
as a knot complement in $\sn3$ is due to R. Riley, and it is the 
complement of the {\em Figure-eight knot}, which 
is the double cover of the Gieseking manifold. It was proven by
Cao and Meyerhoff~\cite{CaoMeyer} that the Figure-eight knot complement
and
its {\em sibling manifold} (which is not a knot complement 
in $\sn3$ but
can be described as 
$(5,1)$ Dehn surgery on the right-handed Whitehead Link),
are the two unique orientable, noncompact hyperbolic 3-manifolds with
the minimum volume $2\mathcal{V} \simeq 2.0298$.
\end{remark}

Theorem~\ref{knotss3} provides a great intuition of how to find
noncompact hyperbolic 3-manifolds. Let $P$ be a closed 3-manifold and
let $L$ be a link in $P$ (i.e. a finite, pairwise disjoint collection
of knots). Then, the manifold $\Ma = P\setminus N(L)$
is a compact 3-manifold with toroidal boundary.
Each boundary component comes from a component of the
original link $L$ and gives rise to an {\em end} (with the
topology of $\bbT^2\times[0,\infty)$) of
the open manifold $P\setminus \ol{N}(L)$. 
Moreover, Theorem~\ref{hypthm} gives us the
intuition that if the link $L$ is sufficiently
{\em complicated}, the interior of $\Ma$ will admit a complete 
hyperbolic metric of finite volume. 
This intuition will be put in a rigorous form 
in Theorem~\ref{HypCrit} below, right after a definition 
necessary for its statement. Here, we
will let $\Ma$ denote
a connected, orientable, compact 3-manifold with toroidal boundary
and, once again, we will use the nomenclature
{\em a surface $\S$ in $\Ma$} to represent
a properly embedded surface $\S\subset \Ma$, i.e., $\S$ is compact, 
embedded in $\Ma$ and $\partial \S = \S \cap \partial \Ma$.

\begin{definition}\label{DefVarias} \
\begin{enumerate}
\item A sphere $S$ in $\Ma$ is {\em essential} if $S$ does not
bound a ball in $\Ma$. If $\Ma$ does not admit
any essential sphere, $\Ma$ is {\em irreducible} (as in 
Definition~\ref{defirred}).
\item A disk $D$ in $\Ma$ is {\em essential}
if $\partial D$ is homotopically nontrivial in $\partial \Ma$.
If $\Ma$ does not admit any essential disks, $\Ma$ is called
{\em boundary-irreducible}.
\item A torus $T$ in $\Ma$ is {\em essential}
if $T$ is incompressible (as in Definition~\ref{incompr})
and not boundary parallel, in the sense that it is not 
isotopic to a component 
of $\partial \Ma$. If $\Ma$ does not admit any essential torus,
$\Ma$ is {\em geometrically atoroidal} (as in Definition~\ref{defgam}).
\item An annulus $A$ is {\em essential}
in $\Ma$ if $A$ is incompressible, 
boundary-incompressible\footnote{If $\S$ is a surface in $\Ma$, a
{\em boundary-compression disk} for
$\S$ is a disk $D$ in $\Ma$ with
$\partial D = D\cap (\S \cup \partial \Ma)$
such that $\partial D = \alpha \cup \beta$, where $\alpha= D \cap \S$
and $\beta = D \cap \partial \Ma$
are arcs intersecting only
in their endpoints, and $\alpha$ does
not cut a disk from $\S$. If $\S$ does not admit
any boundary-compression disk, we say that $\S$ is
{\em boundary-incompressible}.}
and not boundary parallel, in the sense that it is not
isotopic, relative to $\partial A$,
to an annulus $A'\subset \partial \Ma$.
If $\Ma$ does not admit any essential annuli, we say $\Ma$ is
{\em acylindrical}.
\item If $\Ma$ is irreducible, boundary-irreducible, geometrically
atoroidal and acylindrical, we say that $\Ma$ is {\em simple}.
\end{enumerate}
\end{definition}

The above definition allows us to obtain a criterion for
hyperbolicity that generates noncompact, complete hyperbolic 
3-manifolds of finite volume. It follows directly from
Theorem~\ref{hypthm}, so it is also commonly known as
the {\em Hyperbolization theorem}.

\begin{theorem}[Thurston's hyperbolization criterion]\label{HypCrit}
Let $\Ma$ be an orientable, compact 3-manifold with nonempty
toroidal boundary. Then, $\Ma$ is hyperbolic if and only if $\Ma$
is simple.
\end{theorem}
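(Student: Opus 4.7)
The plan is to derive both implications from Theorem~\ref{hypthm}, with the forward direction relying on basic properties of complete hyperbolic 3-manifolds of finite volume and the backward direction being (essentially) a direct verification of the hypotheses of the Hyperbolization Theorem, once the three exceptional manifolds and the small Seifert fibered case are eliminated by simplicity.

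For the forward direction, I would assume that $\Ma$ is hyperbolic, so that the interior of $\Ma$ admits a complete hyperbolic metric of finite volume with universal cover $\hn3$. Since $\hn3$ is contractible, $\pi_2(\Ma)=0$, and the Sphere Theorem (or a direct covering-space argument) gives irreducibility. Boundary-irreducibility follows because each boundary torus $T\subset \partial \Ma$ corresponds to a cusp, and the inclusion $\pi_1(T)\hookrightarrow \pi_1(\Ma)$ is injective onto a rank-two parabolic subgroup, so no essential disk can have boundary a nontrivial curve of $T$. For geometric atoroidality, an essential torus would produce a $\Z\times\Z$ subgroup of $\pi_1(\Ma)$; in a finite-volume cusped hyperbolic 3-manifold every such subgroup is conjugate into a maximal parabolic subgroup, hence the torus is isotopic into a cusp, i.e., boundary-parallel. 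For acylindricity, an essential annulus $A$ would give a nontrivial element $g\in\pi_1(\Ma)$ whose two boundary conjugates would be parabolic with a common fixed point on $\partial_\infty \hn3$; this forces the two boundary cusps of $A$ to coincide and $A$ to be boundary-parallel inside the cusp torus, contradicting essentiality.

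For the backward direction, I would suppose that $\Ma$ is simple and aim to apply Theorem~\ref{hypthm}. Irreducibility is immediate from simplicity. Geometric atoroidality combined with Remark~\ref{smallseifert} gives that $\Ma$ is atoroidal unless it is a small Seifert fibered space; but any compact Seifert fibered space with nonempty boundary (such as a small one) contains an essential vertical annulus over any proper arc in the base orbifold joining boundary components or cone points, contradicting acylindricity. Since $\partial\Ma$ contains at least one torus and the inclusion $\Z\times\Z\hookrightarrow \pi_1(\Ma)$ follows from boundary-irreducibility together with the Loop Theorem, $\pi_1(\Ma)$ is infinite. It remains to rule out the three forbidden models: $\bbD\times\sn1$ is not boundary-irreducible (the disk $\bbD\times\{p\}$ is essential); $\bbT^2\times[0,1]$ is not acylindrical (any annulus $\gamma\times[0,1]$ for $\gamma$ an essential curve in $\bbT^2$ is essential); and $\bbK\wt{\times}[0,1]$ is a Seifert fibered manifold with nonempty boundary, hence fails acylindricity as above. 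With all hypotheses of Theorem~\ref{hypthm} verified, $\Ma$ is hyperbolic.

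The main obstacle, and the reason the statement is nontrivial despite being ``just'' a rewording of the Hyperbolization Theorem, is the forward implication's acylindricity argument: one must control how essential annuli interact with the parabolic subgroups of the cusps, which requires genuine hyperbolic geometry (Margulis lemma, structure of maximal parabolic subgroups) rather than purely topological reasoning. A close second is the careful verification that simplicity excludes the small Seifert fibered case in the backward direction; one needs to observe that every compact Seifert fibered space with nonempty boundary carries an essential vertical annulus, so acylindricity already forces $\Ma$ to be genuinely atoroidal, not merely geometrically so.
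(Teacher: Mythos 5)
Your backward direction tracks the paper closely: both of you verify the hypotheses of Theorem~\ref{hypthm} by using simplicity to rule out the three forbidden manifolds and then invoke Remark~\ref{smallseifert} to reduce the atoroidality question to the small Seifert fibered case. However, there is a small inaccuracy there: your blanket claim that ``any compact Seifert fibered space with nonempty boundary contains an essential vertical annulus'' is false --- the solid torus $\bbD\times\sn1$ (with any of its Seifert fibrations, over a disk with zero or one cone point) is acylindrical, having an essential disk but no essential annulus. The paper handles this more carefully via Proposition~\ref{propClass}: with one boundary component one gets an essential \emph{disk}, and only with two or three boundary components does one get an essential annulus. Your argument does not actually break because you separately exclude $\bbD\times\sn1$ as one of the three forbidden models, but the assertion you make along the way should be corrected to match this case distinction.

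The forward direction is where you take a genuinely different route. The paper uses a minimal-surface argument: an essential sphere, disk, torus or annulus would be isotopic to a properly embedded minimal surface of nonnegative Euler characteristic in the finite-volume hyperbolic metric, and no such surface exists (citing~\cite{CHR} or~\cite{meramos}). You instead give the classical algebraic/hyperbolic-geometry argument: irreducibility from contractibility of $\hn3$, boundary-irreducibility and geometric atoroidality from the structure of maximal parabolic subgroups, and acylindricity from the uniqueness of the fixed point of a parabolic element on $\partial_\infty\hn3$. Both are standard; yours is probably closer in spirit to Thurston's original treatment and requires only the Margulis lemma and elementary group theory, while the paper's approach is compact and plugs directly into existing minimal-surface literature that a geometrically-minded reader of this survey will already know. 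Your acylindricity sketch could be tightened --- you need to argue that both boundary components of the annulus lie on the \emph{same} cusp torus (because the core curve, being parabolic, fixes a unique point at infinity) and then that the annulus is isotopic rel boundary into that cusp torus --- but the idea is correct.
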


\begin{proof}
Let $\Ma$ be as stated and assume that $\Ma$ is simple.
We will show that $\Ma$ satisfies the hypothesis of 
Theorem~\ref{hypthm}. First, $\Ma$ is
irreducible by the definition of being simple. Also, 
since $\partial \Ma$
is nonempty and toroidal, $\pi_1(\Ma)$ is infinite.
Furthermore, 
the fact that there are no essential disks implies that
$\Ma\neq \bbD\times\sn1$.
Since both $\bbT^2\times[0,1]$ and $\bbK\wt{\times} [0,1]$ contain
essential annuli, $\Ma$ is neither of them.
It remains to show that $\Ma$ is atoroidal. By hypothesis,
$\Ma$ is geometrically atoroidal, so the other option 
(see Remark~\ref{smallseifert}) is that 
$\Ma$ is a small Seifert fibered space. But since 
$\partial \Ma \neq \emptyset$, we may use Proposition~\ref{propClass}
to see that if the number of components in $\partial \Ma$
is one, then $\Ma$ admits an essential disk and if it is two or three,
it admits an essential annulus, a contradiction since $\Ma$ is simple.

On the other hand, assume that $\Ma$ is hyperbolic.
Then, there exists a complete hyperbolic metric of finite
volume in ${\rm int}(\Ma)$ and a standard minimization argument
(such as in~\cite{HRW} or in~\cite{CHMR}) 
shows that any essential sphere, disk, torus or annulus in $\Ma$
would provide a properly embedded minimal surface in the
hyperbolic metric of ${\rm int}(\Ma)$ with nonnegative Euler
characteristic. Since
there are no such a surface in a hyperbolic 3-manifold of finite
volume (see, for instance,~\cite[Theorem~2.1]{CHR} or 
\cite[Corollary~4.7]{meramos}), $\Ma$ is simple.
\end{proof}

To finish this section, we will present some examples of
link complements in $\sn3$. Sometimes, we shall talk about
{\em projections} of links, which is an intuitive concept, but
we suggest~\cite{Adbook} as a first reference to the reader interested
in the topic.
As in Theorem~\ref{knotss3}, we say that 
a link $L$ in $\sn3$ is hyperbolic if $\sn3\setminus N(L)$ admits
a complete hyperbolic metric of finite volume.

\begin{example}[The unknot, Figure~\ref{figtrefoil}~(a)]
Let $K\subset \sn3$ be the trivial knot (i.e., $K$ is isotopic in $\sn3$
to $\{(x,y,z,w)\in\sn3\mid x^2+y^2 = 1, z= w = 0\}$).
Via stereographic projection, we may see that $\sn3\setminus K$ is 
diffeomorphic to $\R^3\setminus Z$, where $Z$ is the $z$-axis.
Since $\R^3\setminus Z$ admits a product structure 
$\sn1\times(0,\infty)\times \R$, we can see that $\sn3\setminus K$
is diffeomorphic to $\sn1\times \R^2$ and to $\sn1\times \bbD$.
In particular, using the coordinates of $\sn1\times\bbD$, we may see
that for any given $p\in \sn1$, $\{p\}\times\bbD$ 
is an essential disk in $\sn3\setminus K$, hence
$K$ is not hyperbolic.
\end{example}

\begin{example}[The trefoil knot, Figure~\ref{figtrefoil}~(b)]
If $K$ is the trefoil knot,
$K$ is a torus knot, hence it is not hyperbolic. 
Note that $\sn3\setminus N(K)$ admits an essential annulus.
The complement of the trefoil knot was among the first 3-manifolds
(since the trefoil is the {\em simplest} nontrivial knot, in the
sense that it has the projection with the fewest possible crossings)
which Thurston attempted to endow with a complete hyperbolic metric
of finite volume, before he developed his criterion. He didn't succeeded
because it was not possible, although he still wasn't aware of that.
\end{example}

\begin{example}[The figure-eight knot, Figure~\ref{figtrefoil}~(c)]
If $K$ is the figure-eight knot,
$\sn3\setminus N(K)$ is hyperbolic
and its complement has
a hyperbolic volume of approximately 2.0298.
Although its hyperbolicity was proved first by R. Riley, 
this was the first noncompact 3-manifold in which Thurston could find
a hyperbolic structure of finite volume.
\end{example}

\begin{example}[The Hopf link, Figure~\ref{figtrefoil}~(d)]
The Hopf link $L$ is the union of two trivial knots 
$C_1,\,C_2$.
It is not hyperbolic, since its complement admits an essential annulus.
Using the stereographic projection about a point (say in $C_1$) turns
$\sn3\setminus C_1$ diffeomorphically into $\R^3\setminus Z$, where
$Z$ denotes the $z-$axis. In particular, $\sn3\setminus L$ is
diffeomorphic to $\R^3\setminus (Z \cup C)$, where 
$C = \{(x,y,0)\in\R^3\mid x^2+y^2 = 1\}$, which is
easily seen as diffeomorphic to $\bbT^2\times (0,\infty)$
since it admits a foliation by tori having $C$ as their core curves.
In particular, the manifold $\sn3\setminus N(L)$ is diffeomorphic
to $\bbT^2\times[0,1]$, which can also be seen as non-hyperbolic, 
but this time we go further and notice it admits not
one (or two, which are easy to find in $\sn3\setminus L$) but
infinitely many non-isotopic essential annuli, just take
a nontrivial curve $\gamma$ in $\bbT^2$ and look at the
annulus $\gamma\times[0,1]$ in $\bbT^2\times[0,1]$.
\end{example}

\begin{example}[Borromean rings, Figure~\ref{figtrefoil}~(e)]
The Borromean rings with three components are three 
disjoint, trivially embedded circles such that each two of them are not
linked (in the sense that there exists a sphere which separates
one from the other), but the three components together are linked.
It is hyperbolic, and its complement has a hyperbolic volume 
of approximately 7.3277.
\end{example}

\begin{example}[The $(n,s)$-chain, 
Figure~\ref{figtrefoil}~(f)]\label{egdaisy}
For given $n\geq3$ and $s\in \Z$, a $(n,s)$-chain is a link 
with $n$ trivial components
$C_1,\,C_2,\,\ldots,\,C_n$ in such a way that for each 
$i\in\{1,\,\ldots,\,n\}$ the
component $C_i$ is linked only with $C_{i-1}$ and with $C_{i+1}$ 
(where we extend our notation to allow 
$C_0 = C_n$ and $C_{n+1} = C_1$), and 
each pair $C_i$ and $C_{i+1}$ is linked as in the Hopf link. Also,
we add $s$ left half twists to one of the components (if $s\geq0$, the 
link is alternating, otherwise we add $-s$ right twists to
one component and the projection of the
link will no longer be alternating). 
It was proven by W. Neumann and A. Reid~\cite{NeuR}
that the $(n,s)$-chain
is hyperbolic if and only if 
$\{\abs{n+s},\abs{s}\} \not\subset \{0,1,2\}$.
In particular, a chain with 3 components is not hyperbolic if and
only if $s = -1$ or $s = -2$, a chain with 4 components is 
hyperbolic unless $s = -2$ and any chain with 5 or more
components is hyperbolic.
\end{example}

\begin{example}[Composite knots, Figure~\ref{figtrefoil}~(g)]
There is a notion of composition for oriented 
knots,
and a knot is called a {\em composite} 
knot if it can be obtained from such an operation.
A knot is called {\em prime} if it cannot be obtained from two
nontrivial knots by composition. 
There is a simple characterization to identify if 
a knot $K$ in $\sn3$ is a composite knot. Let $S$ be an embedded
sphere in $\sn3$, separating $\sn3$ into two ball regions 
$B_1,\,B_2$. If $S\cap K$ consists of a transversal intersection
in two points and the resulting components $\gamma_1 = K\cap B_1$ and
$\gamma_2 = K\cap B_2$ are nontrivial, 
in the sense that there is no isotopy in $B_i$ that fixes
the endpoints of $\gamma_i$ and
maps $\gamma_i$ to $S$ (in other words, the two pieces of $K$ in 
each of $B_1,\,B_2$ are themselves knotted while fixing 
their endpoints in $S$), then
$L$ is a composite.
Note that a composite knot is never hyperbolic, since
the sphere $S$ with the two points removed provides an essential
annuls in $\sn3\setminus N(L)$.
\end{example}

\begin{example}[Unlinked knots, Figure~\ref{figtrefoil}~(h)]
Let $K_1,\,K_2$ be two distinct knots in $\sn3$. If there 
is an embedded sphere $S$ that separates 
$K_1$ and $K_2$, then the knots are
not linked. Since, in this case, $S$ is an essential sphere to 
$\sn3\setminus N(K_1\cup K_2)$, the link $K_1 \cup K_2$ is not 
hyperbolic.
\end{example}

\begin{figure}	
\centering
\includegraphics[width=0.9\textwidth]{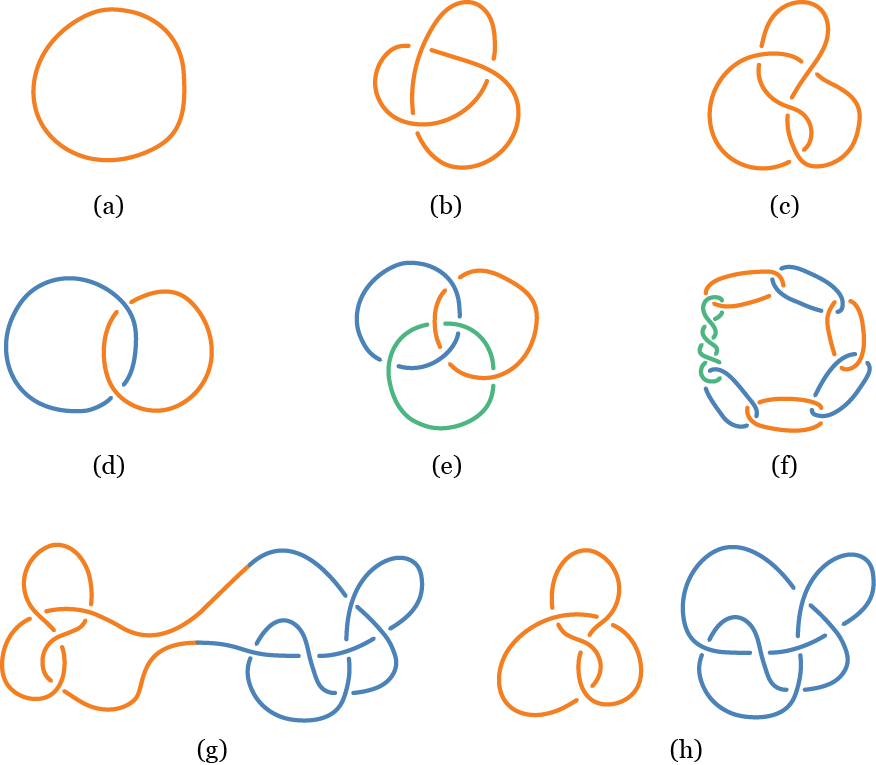}
\caption{(a) The Unknot; (b) The Trefoil knot; 
(c) The Figure-eight knot; (d) the Hopf link; 
(e) The Borromean rings with three components; 
(f) An $(n,s)$-chain with $n = 7$ and $s = 3$; 
(g) A composite knot, obtained by the composition 
of a figure-eight knot and the knot $6_{3}$; 
(h) Two knots (figure-eight knot and $6_{3}$) which are unlinked. 
\label{figtrefoil}}
\end{figure}

A useful tool for deciding whether a given link in $\sn3$ is hyperbolic
or not is the software SnapPy~\cite{Snappy}, which allows the user 
to draw a projection of a knot or a link and computes
several topological invariants (including hyperbolic 
volume, if the link is hyperbolic).
According to the developers,
{\em SnapPy is a program for studying the topology and geometry of 
3-manifolds, with a focus on hyperbolic structures}.
It was written using the kernel of a previous program, SnapPea,
by Jeff Weeks.

\subsection{Applications to knot and link complements}

With a little effort, it is not difficult to obtain
Theorem~\ref{knotss3} from Theorem~\ref{HypCrit}. In fact, any
complement of a torus knot will admit an essential disk (if the knot 
is the unknot) or an essential annulus (the 
projection of the knot to the standard torus separates it into annuli),
and any satellite knot admits an essential torus. In this section, 
we will present some recent results concerning hyperbolicity of 
link complements in 3-manifolds that use Thurston's hyperbolicity
criterion in their proofs. These results
are based on the works of Adams, Meeks and the second 
author~\cite{amr1,amr2} and
allowed the construction
of new examples of hyperbolic 3-manifolds of finite volume,
containing totally umbilic surfaces for any admissible topology and
mean curvature.

The setting to be considered is the following.
Let $P$ be a closed 3-manifold and let $L$ be a link in $P$ such that
the compact manifold with nonempty toroidal boundary 
$M = P\setminus N(L)$ is hyperbolic (this will be assumed throughout
all the statements that follow).
We consider two moves that one can perform on $L$ to obtain
a new link $L'$ in $P$ such that the corresponding manifold
$M' = P\setminus N(L')$ will also be hyperbolic.

The first move we
consider is called the {\em chain move}~\cite[Theorem~3.1]{amr1}.
Here, we start
with a trivial component bounding a twice-punctured disk
in a ball $\cB\subset P$ as in Figure~\ref{chainlemma1}, 
and we replace the
tangle on the left with the tangle on the right
in Figure~\ref{chainlemma1}, where $k$ is any integer. Assuming that
the $(P\setminus\cB)\cap M$ is not
the complement of a rational tangle in a 3-ball (in particular this is
trivially satisfied if $P\neq \sn3$, see~\cite[Chapter~2]{Adbook}),
the result is hyperbolic.
We note that there are counterexamples to extending the result to 
the case where $P = \sn3$ and $(P\setminus\cB)\cap M$ is a
rational tangle complement in a 3-ball, but they are completely 
classified in the appendix of~\cite{amr1}.

\begin{figure}
\centering
\includegraphics[width=0.95\textwidth]{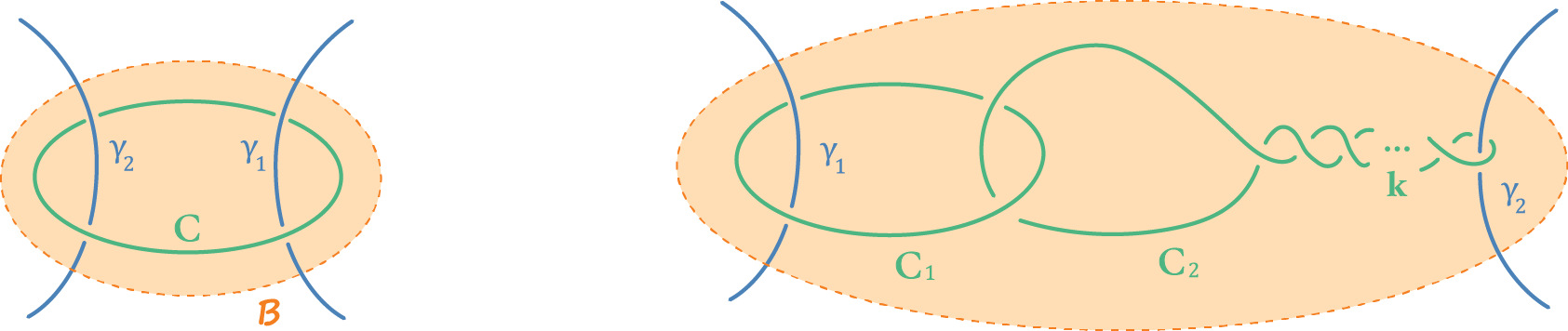}
\caption{Inside the 3-ball $\cB$, replacing the component $C$
by the two components $C_1,\,C_2$ preserves hyperbolicity of the 
complement, provided that $(P\setminus \cB)\cap L$ is not
a rational tangle in a 3-ball.\label{chainlemma1}}
\end{figure}

The second move is called the 
{\em switch move}~\cite[Theorem~4.1]{amr1}. Suppose 
that $\alpha$ is an embedded arc in $P$ that intersects $L$
only on its two distinct endpoints and with interior that
is isotopic (in $P\setminus L$) 
to an embedded geodesic in the hyperbolic metric of $M$.
Let $\cB$ be a small neighborhood
of $\alpha$ in $P$. Then $\cB$ intersects
$L$ in two arcs, as in Figure~\ref{figaugmented}~(a). The switch move
allows us to surger $L$ and
add in a trivial component as in Figure~\ref{figaugmented}~(b) while
preserving hyperbolicity.

\begin{figure}
\centering
\includegraphics[width=0.99\textwidth]{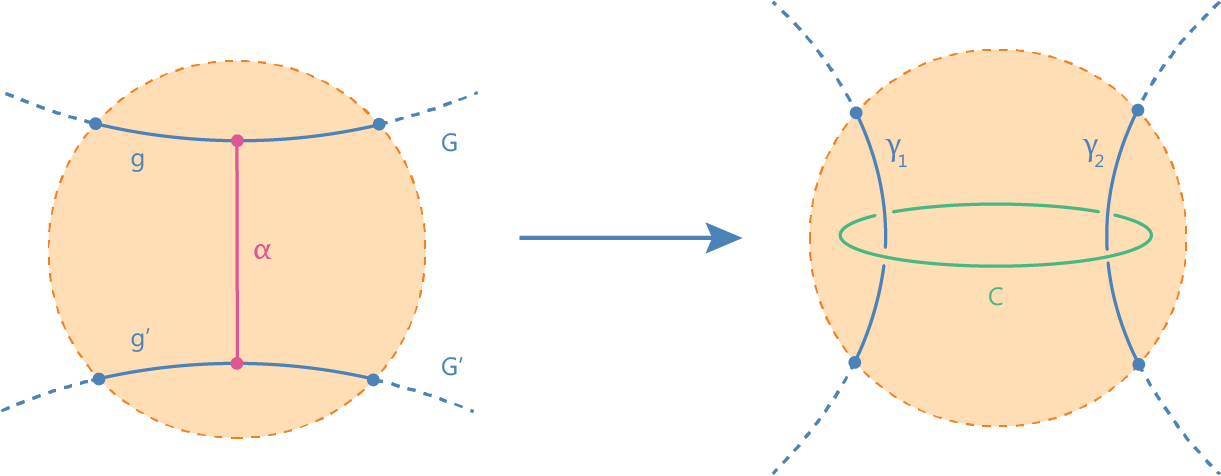}
\caption{The switch move replaces the
arcs $g$ and $g'$ in a neighborhood of a complete geodesic 
by the tangle $\g_1\cup~\g_2\cup C$.}
\label{figaugmented}
\end{figure}

The proofs of the moves above follow step by step Thurston's
criterion presented in Theorem~\ref{HypCrit} above. Namely,
the authors analyze all possibilities for an essential surface
that would prevent $M'$ from being hyperbolic and show that 
whenever such obstruction exists, there is an obstruction 
for the hyperbolicity of $M$ as well. 
Together, they allowed the following construction, which 
is one of the main results of~\cite{amr2}.

\begin{theorem}[{Adams-Meeks-Ramos~\cite[Theorem~1.2]{amr2}}]
\label{thmumbilic}
A connected surface $S$ appears as a
properly embedded totally umbilic surface
with constant mean curvature $H\in[0,1)$
in some hyperbolic 3-manifold of finite volume if and only if
$S$ has finite negative Euler characteristic.
\end{theorem}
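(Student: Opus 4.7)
The plan is to prove the two directions separately. The necessity is short and uses the intrinsic geometry of totally umbilic surfaces in $\hn3$, while the sufficiency is the substantial direction and requires an explicit construction based on the chain and switch moves recalled earlier.

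For the necessity, let $S\subset M$ be a properly embedded totally umbilic surface with constant mean curvature $H\in[0,1)$ in a finite-volume hyperbolic $3$-manifold $M$. The Gauss equation forces the intrinsic Gaussian curvature of $S$ to be the constant $-(1-H^{2})<0$. Since the cusps of $M$ are toroidal, a standard asymptotic analysis in the upper half-space model shows that each end of $S$ enters a cusp of $M$ and is itself a finite-area cusp end of $S$; thus $S$ has finite topology and finite area. Gauss--Bonnet then yields $\chi(S)<0$.

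For the sufficiency, I would first realize each $S$ with $\chi(S)<0$ as a two-sided totally geodesic surface $\Sigma_{0}$ in some finite-volume hyperbolic $3$-manifold $M_{0}$. The strategy is to fix a seed hyperbolic link complement $P\setminus N(L_{0})$ carrying a reflective isometric involution whose fixed set is a totally geodesic surface --- for instance a fully augmented link, in which crossing disks of augmentation components are known to be totally geodesic. Then I apply a sequence of chain and switch moves, each preserving hyperbolicity by the results of~\cite{amr1}, in a way compatible with the involution, so that the involution extends to the new complement and its fixed set acquires the prescribed topology $S$. By Mostow rigidity the extended involution is realized isometrically by the new hyperbolic metric, so its fixed set is automatically totally geodesic.

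To upgrade from $H=0$ to arbitrary $H\in(0,1)$, set $d=\operatorname{arctanh}(H)$ and use the classical fact that the equidistant surfaces at distance $d$ from a totally geodesic plane in $\hn3$ are totally umbilic with mean curvature $\tanh(d)=H$. By the LERF property of $\pi_{1}(M_{0})$ (Agol--Wise), there is a finite cover $\widetilde{M}\to M_{0}$ in which a lift $\widetilde{\Sigma}$ of $\Sigma_{0}$ is homeomorphic to $\Sigma_{0}$ and admits an embedded tubular neighborhood of radius at least $d$. The two boundary components of this neighborhood are then properly embedded totally umbilic surfaces homeomorphic to $S$ with mean curvature exactly $H$. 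The main obstacle is the realization step of the previous paragraph: arranging that every topology with $\chi<0$ arises as the fixed set of a reflective involution of some hyperbolic link complement reachable from a fixed seed by an equivariant sequence of chain and switch moves. Careful bookkeeping of the topological effect of each move on the fixed surface, and verification that the moves can be performed equivariantly while preserving hyperbolicity, is the technical heart of the proof.
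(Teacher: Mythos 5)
Your high-level strategy coincides with the paper's: realize $S$ as the totally geodesic fixed-point set of a reflective involution on a hyperbolic link complement by performing equivariant chain and switch moves, invoke Mostow--Prasad rigidity to make the involution isometric, and then use LERF together with parallel surfaces at distance $\tanh^{-1}(H)$ to produce a totally umbilic example in a finite cover. Your necessity sketch (Gauss equation gives intrinsic curvature $-(1-H^2)$; cusp asymptotics give finite topology and area; Gauss--Bonnet gives $\chi<0$) is sound and more explicit than the paper's, which simply cites Theorem~1.1 of~\cite{amr2}.

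Where you diverge, and where the gaps are: the paper's seed manifolds are different from yours. Rather than fully augmented links, the paper uses the daisy chain link $L_{2n}$ in $\sn3$ with an equatorial reflection to handle $n$-punctured spheres, and the Adams et al.~thickened-surface links in $S\times\sn1$ with the interval reflection to handle closed surfaces of genus $g\geq 2$; all remaining topologies are then reached from these two seeds by equivariant switch and chain moves. You explicitly acknowledge that the realization step is the technical heart and leave it open, so this is not a hidden error, but it is the bulk of the proof. In particular, you should be aware that the once-punctured genus-one case resists the strategy as stated: the thickened-torus seed from Adams et al.~gives a cusped hyperbolic manifold with no boundary, so the chain move does not apply directly, and the paper has to argue ad hoc that the only obstruction to hyperbolicity of the doubled manifold is the central essential torus before it can run an adapted switch move. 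Finally, in the LERF step your claim that a suitable finite cover admits an embedded tubular neighborhood of radius $d$ about the lifted geodesic surface needs justification; the paper supplies it by showing that non-embeddedness of $\Sigma_{t_0}$ at the first bad time $t_0$ forces a closed geodesic of length $4t_0$ hitting $\Sigma$ orthogonally at two points, and then uses LERF to pass to a cover in which $\Sigma$ lifts and there is no closed geodesic of length $\leq 5\tanh^{-1}(H)$.
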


\begin{remark}
In fact, Theorem~1.1 of~\cite{amr2}
implies that any embedded totally umbilic surface in a hyperbolic
3-manifold of finite volume is proper. Also, the same theorem
proves that if $\S$ is a totally umbilic
surface properly embedded in a hyperbolic 3-manifold of finite
volume, then $\chi(\S)<0$ if and only if
the mean curvature of $\S$ satisfies
$\abs{H_\S}<1$. Thus, in some sense, Theorem~\ref{thmumbilic} 
is sharp, since it shows that any possible pair of topological type
and mean curvature $H\in [0,1)$ actually exists.
\end{remark}

\begin{proof}[Main steps in the proof of Theorem~\ref{thmumbilic}]
First, we briefly sketch the complete proof of the theorem.
Let $S$ be a connected surface with finite, negative Euler 
characteristic. For simplicity, we will assume that $S$ is orientable,
hence $S$ is an orientable surface of genus $g$ with $n$
open disks removed, so that $\chi(S) = 2-2g-n<0$.
We will construct an explicit compact 3-manifold 
$M$ with toroidal boundary
that satisfies Thurston's conditions and where there exists an
order-two diffeomorphism $R$ with fixed point set a
properly embedded, possibly disconnected, 
separating surface $\mathcal{S}\subset M$,
where one connected component $\S$ of $\mathcal{S}$ is diffeomorphic
to $S$ (and this part of the proof is divided in cases depending
on $g$ and $n$).

Since the interior of $M$ admits a complete hyperbolic
metric of finite volume, Mostow-Prasad rigidity Theorem implies that
$R$ is isotopic to an isometry $\varphi$, which will also be an order-two
diffeomorphism (since $\varphi^2$ is an isometry isotopic to the identity,
hence equal to it).
In particular, the fixed point set of $\varphi$, which is itself isotopic
to $\mathcal{S}$ and contains a component diffeomorphic to $S$, 
is totally geodesic. After producing
this explicit totally geodesic example, we will fix $H\in (0,1)$
and use a property of
the fundamental groups of hyperbolic 3-manifolds of finite volume
to construct finite covers of ${\rm int}(M)$ where $\S$ lifts and
there is a totally umbilic surface $\S_H$, isotopic to the lift
of $\S$ and with constant mean curvature $H$.

\begin{case}
$S$ is an $n$-punctured sphere ($g=0$) for $n\geq 3$.
\end{case}

\noindent 
In this case, let $P = \sn3$ and let $\wh{S}$ be an equatorial sphere
$\sn2$ in $\sn3$. Then, there exists a reflection $R\colon \sn3\to\sn3$
with $\wh{S} = {\rm Fix}(R)$. 
Consider the {\em daisy chain} link $L_{2n}$ in $\sn3$
with $2n$ components, where every other component lies in $\wh{S}$ 
and the other components are {\em perpendicular} to $\wh{S}$ 
in the sense that they are invariant under $R$.
Since $2n\geq 6$, $L_{2n}$ is hyperbolic by Example~\ref{egdaisy}.
In particular, as explained above, the restriction of
$R$ to $N = \sn3\setminus L_{2n}$ is an isometry of the hyperbolic
metric in $N$. The fixed point set of such isometry contains
$n$ components diffeomorphic to $3$-punctured spheres and one component
diffeomorphic to $S$.

\begin{case}\label{casebase1}
$S$ is closed, i.e., $g\geq 2$ and $n=0$.
\end{case}

\noindent
Consider $P = S\times\sn1$ and, after identifying 
$\sn1 = \{(x,y)\in \R^2\mid x^2+y^2 = 1\}$, let
$P^+ = P\cap \{y\geq0\}$, $P^- = P\cap \{y\leq0\}$
and let $R\colon P \to P$ be the reflection that interchanges $P^+$ and
$P^-$, leaving $P^+\cap P^- = S\times\{(-1,0),(1,0)\}$ fixed.
Both $P^+$ and $P^-$ have the topology of
$S\times [0,1]$, so the work of Adams 
{\em et al.}~\cite[Theorem~1.1]{adetal}
imply that there exists a hyperbolic link $L_1$ in $P^+$ such that
$P^+\setminus L_1$ admits a complete hyperbolic metric of finite
volume with totally geodesic boundary. After letting
$L_2 = R(L_1)\subset P^-$ and $L = L_1\cup L_2$, it follows that
the manifold $P\setminus L$ is hyperbolic, the
reflection $R$ restricts to an isometry with fixed point set
equal to $S\times\{(-1,0),(1,0)\}$, providing
two totally geodesic surfaces diffeomorphic to $S$.

\begin{case}\label{egcasebase2}
$n=1$ and $g\geq 2$, i.e., 
$S$ is a one-time-punctured surface of genus $g\geq2$.
\end{case}

\noindent 
Let $S_g$ denote the closed surface of genus $g$ and let
$P = S_g \times \sn1$ and $L$ be as in Case~\ref{casebase1}.
In the hyperbolic metric of $P\setminus L$, let $\gamma^+$
be a minimizing geodesic ray joining a point in $S_g\times\{(-1,0)\}$
to a point (at infinity) in $L_1$. Then, $\gamma^+$ is orthogonal to 
$S_g\times\{(-1,0)\}$ and we may use $R$ to reflect $\gamma^+$
and obtain an arc $\gamma = \gamma^+\cup R(\gamma^+)$
which is a complete geodesic in the hyperbolic metric of 
$P\setminus L$. But a small neighborhood in $P$ of $\gamma$
is a 3-ball $\cB$ which intercepts $L$ in two arcs $g_1\subset L_1$
and $g_2\subset L_2$. We can choose $\cB$ so that $R(\cB) = \cB$
and $R(g_1) = g_2$. Then, we can use the Switch Move
as described previously to replace
the arcs $g_1$ and $g_2$ by a tangle such as in 
Figure~\ref{figaugmented}, and do 
so in an equivariant manner so the trivial circle $C$ added lies
(and bounds a disk in) $S_g\times\{(-1,0)\}$. This creates a new link
$L'$ which is hyperbolic in $P$ and satisfies $R(L') = L'$. 
Furthermore, the reflection $R$ once again restricts to 
an isometry of this hyperbolic metric and the fixed point set
of such isometry has three components: $S_g\times\{(1,0)\}$,
a 3-punctured sphere (more easily seen as a twice-punctured disk) 
bounded by $C$ in $S_g\times\{(-1,0)\}$ and
the other component of $(S_g\times\{(-1,0)\})\setminus C$, which
is diffeomorphic to $S$.

\begin{case}\label{secondtolast}
$g\geq2$ and $n\geq 2$.
\end{case}

\noindent This case once again follows from the previous one.
Just note that in Case~\ref{egcasebase2} we created a hyperbolic
link $L'$ in $P = S_g\times\sn1$ in the ball $\cB$, and the tangle
$\cB\cap L'$ is such that we can apply the Chain Move,
again in an equivariant manner with respect to $R$, and add punctures
to one of the totally geodesic surfaces in 
$(S_g\times\{(-1,0)\})\setminus C$.

\begin{case}
$g = 1$ and $n\geq1$, so $S$ is a torus punctured at least one time.
\end{case}

\noindent Since this case deals with a surface of genus 1 in 
a hyperbolic 3-manifold,
it is probably the most difficult to solve. 
Again by~\cite{adetal}, there exists a link $L_1$ in the interior 
of the compact 3-manifold $P^+ = \bbT^2\times[0,1]$ such that 
$P^+\setminus N(L_1)$ is hyperbolic, but this time, in a distinction
from Case~{casebase1}, the resulting hyperbolic manifold is complete,
so it has no boundary and the Chain Move does not apply directly.
However, it is possible to adapt the proof of the Chain
move, together with the fact that the only obstacle for
hyperbolicity of the manifold 
$M = (\bbT^2\times [-1,1])\setminus N(L_1\cup R(L_1))$, 
where $R\colon \bbT^2\times[-1,1] \to \bbT^2\times[-1,1]$ is
$R(x,t) = (x,-t)$, is the existence of
the essential torus $\bbT^2\times\{0\}$, to prove that
the ad hoc analogous of the Switch move to this specific setting
applies. From here, the proof follows analogously 
as in Cases~\ref{egcasebase2} and~\ref{secondtolast},
firstly obtaining a once-punctured totally geodesic torus
and then adding punctures to it using the Chain move.

\

The arguments in the above cases show that any admissible
orientable topology
for a totally geodesic surface in a hyperbolic 3-manifold
of finite volume actually can be realized as such. Similar
arguments can be done for nonorientable topologies, see~\cite{amr2}.
To finish the sketch of the proof of Theorem~\ref{thmumbilic}, 
we need the following result, which was obtained 
in a series of recent works (see~\cite{3mg} for an appropriate
list of references).

\begin{theorem}
Let $N$ be a noncompact hyperbolic 3-manifold of finite volume.
Then, $\pi_1(N)$ is Locally Extendable Residually Finite (for 
short, LERF), i.e., for every finitely generated subgroup $K$
of $\pi_1(N)$ and any finite set 
$\mathcal{F}\subset \pi_1(N)$, $\mathcal{F}\cap K = \emptyset$, there
exists a representation $\sigma\colon \pi_1(N)\to F$ to
a finite group $F$ such that 
$\sigma(\mathcal{F})\cap \sigma(K) = \emptyset$.
\end{theorem}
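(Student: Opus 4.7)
The plan is to establish LERF by showing that $\pi_1(N)$ is virtually the fundamental group of a compact special cube complex (in the sense of Haglund-Wise), and then to invoke the theorem of Haglund and Wise asserting that fundamental groups of such complexes are subgroup separable, with LERF passing to finite-index supergroups. This reduces the statement to two geometric inputs: the construction of enough codimension-one subgroups of $\pi_1(N)$, and the virtual specialness of the resulting cube complex.

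First I would produce a rich family of quasi-Fuchsian surface subgroups of $\pi_1(N)$. For a cusped hyperbolic 3-manifold of finite volume, geometrically finite immersed incompressible surfaces exist by classical work of Cooper-Long-Reid, and surface subgroup theorems in the spirit of Kahn-Markovic (adapted to the cusped setting) provide enough quasi-Fuchsian subgroups so that their limit sets in $\partial\hn3$ separate every pair of distinct points. Each such subgroup is codimension-one in $\pi_1(N)$, hence yields a wall in the Bass-Serre / coset geometry of $N$. Second, applying Sageev's dual cube complex construction to this collection produces a CAT(0) cube complex $X$ on which $\pi_1(N)$ acts by cellular isometries, with hyperplane stabilizers among the chosen surface subgroups, and the Bergeron-Wise cocompactness criterion guarantees that the separation property above makes this action proper and cocompact (after handling the cusps via relatively hyperbolic arguments).

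The main obstacle, and the deepest ingredient of the entire argument, is the third step: to show that the action above is virtually special, meaning that some finite-index subgroup $H \leq \pi_1(N)$ is isomorphic to $\pi_1(Y)$ for a compact special cube complex $Y$. This is exactly Wise's quasi-convex hierarchy theorem, whose engine is the Malnormal Special Quotient Theorem, and it applies to cusped hyperbolic 3-manifolds (the analogous closed case having been reduced to this one by Agol's proof of the virtual Haken and virtual fibering conjectures). Once virtual specialness is in hand, the Haglund-Wise separability theorem gives LERF for $H$, and a short argument promotes this to LERF for $\pi_1(N)$: given a finitely generated $K \leq \pi_1(N)$ and a finite set $\mathcal{F}$ disjoint from $K$, one passes to $K \cap H$ and $\mathcal{F}' \subset H$ obtained by multiplying $\mathcal{F}$ by coset representatives of $H$, separates inside $H$ by a map to a finite group, and then induces a representation of $\pi_1(N)$ to the corresponding finite permutation group on cosets to produce the required $\sigma$.
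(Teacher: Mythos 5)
The paper itself does not prove this theorem: it cites it as a known result and points the reader to Aschenbrenner--Friedl--Wilton for references, so there is no in-paper argument to compare against. Your outline of the literature is broadly on the right track, but it contains a real conflation. The Haglund--Wise separability theorem for (virtually compact) special groups --- equivalently, Wise's quasi-convex hierarchy theorem in the relatively hyperbolic, cusped setting --- gives separability only of (relatively) quasi-convex subgroups, which for $\pi_1(N)$ means geometrically finite subgroups. LERF asks for separability of \emph{all} finitely generated subgroups, and a cusped hyperbolic 3-manifold group has many finitely generated subgroups that are not geometrically finite: if $N$ virtually fibers (which, by Agol, it always does), any fiber surface subgroup is one. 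So your step ``Once virtual specialness is in hand, the Haglund--Wise separability theorem gives LERF for $H$'' is false as stated; it gives GFERF/QCERF, not LERF.

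To close the gap you need the Tameness theorem (Agol, Calegari--Gabai) together with Canary's Covering Theorem, which in combination show that every finitely generated subgroup of a finite-volume hyperbolic 3-manifold group is either geometrically finite or a virtual fiber subgroup. Geometrically finite subgroups are handled by your virtual specialness route; virtual fiber subgroups require a separate, much easier argument (such a subgroup is the kernel of a surjection to $\Z$ from a finite-index subgroup, and separability follows readily from that fibration). Two smaller points: in the cusped case the Kahn--Markovic machinery is not needed to produce codimension-one subgroups, since the boundary torus already makes $N$ Haken and Wise's hierarchy approach applies directly to cut along a geometrically finite hierarchy; and your final step, promoting LERF from a finite-index subgroup to the whole group, is standard and correct.
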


The main idea to produce a totally umbilic example from a totally
geodesic one is to use the notion of a {\em $t$-parallel surface}.
Specifically, if $\S$ is a two-sided
totally geodesic surface in a hyperbolic
3-manifold of finite volume $N$ as produced before, and $\S$ is
oriented with respect to a unitary
normal vector field $\eta$, for any $t>0$ we let
$$\Sigma_t = \{\exp_p(t\eta(p))\mid p\in \S\}.$$
Then, $\Sigma_t$ is totally umbilic and has mean 
curvature $H_t = \tanh(t)$. Moreover, it is not difficult to use the
ambient geometry of the ends of $N$ to see that $\Sigma_t$ is proper
and that for small values of $t>0$, $\Sigma_t$ is embedded.
With this in mind, one can look at the first time 
$t_0>0$ where $\S_{t_0}$ is not embedded. Such $t_0$ exists (so
the first intersection point of 
the family $\{\S_{t}\}$ is not at infinity) and,
for some $p\in \S$, the
set $\{\exp_p(s\eta(p))\mid s\in [0,4t_0]\}$ is a closed geodesic
of length $4t_0$, orthogonal to $\S$ in exactly two distinct
points (see~\cite[Lemma~5.2]{amr2}).

The next (and last) step in the proof is to assume that $\tanh(t_0)<H$
(otherwise the theorem follows) and
use that $\pi_1(N)$
is LERF to construct a finite cover $\Pi\colon \wh{N}\to N$ 
where $\Sigma$ lifts to a totally geodesic surface $\wh{\S}$
and there is no closed geodesic in $\wh{N}$ with length 
less than or equal to $5\tanh^{-1}(H)$. In such a manifold,
for $t = \tanh^{-1}(H)$ the corresponding $t$-parallel
surface to $\wh{\S}$, $\wh{\S}_t$, will be a properly embedded
totally umbilic surface as promised.
\end{proof}



\center{Izabella M. de Freitas at izabellamdf@gmail.com\\
Programa de Pós-Graduação em Matemática, Universidade Federal
do Rio Grande do Sul, Porto Alegre, Brazil}
\center{Álvaro K. Ramos at alvaro.ramos@ufrgs.br \\
Departmento de Matemática Pura e Aplicada, Universidade Federal do Rio Grande
do Sul, Porto Alegre, Brazil}

\end{document}